\newcommand{\supast}{{${^\ast}$}}
\newcommand{\ulb}{{\textup{(}}}
\newcommand{\urb}{{\textup{)}}}
\newcommand{\T}{{\mathbb{T}}}
\newcommand{\Z}{{\mathbb{Z}}}
\newcommand{\idmap}{{\textup{id}}}
\newcommand{\ev}{\textup{ev}}
\newcommand{\rest}[1]{{\restriction_{#1}}}
\newcommand{\Ker}{{\textup{Ker\,}}}
\newcommand{\Gdual}{{\widehat G}}
\newcommand{\suml}{{\sum_{l\in\Z}}}
\newcommand{\sumn}{{\sum_{n\in\Z}}}
\DeclareFontFamily{OT1}{pzc}{}
\DeclareFontShape{OT1}{pzc}{m}{it}{<-> s * [1.15] pzcmi7t}{}
\DeclareMathAlphabet{\mathscr}{OT1}{pzc}{m}{it}
\newcommand{\hull}{{\mathscr h}}
\newcommand{\kernel}{{\mathscr k}}
\newcommand{\Hull}{{\mathscr H\,}}
\newcommand{\Kernel}{{\mathscr K\,}}
\newcommand{\Zeroes}{{\mathscr Z\,}}
\newcommand{\Ideal}{{\mathcal I}}
\newcommand{\Idealtilde}{{\widetilde{\mathcal I}}}
\newcommand{\Hulldomain}{{\mathscr A}}
\newcommand{\Kerneldomain}{{\mathscr B}}
\newcommand{\Zeroesdomain}{{\mathscr A}}
\newcommand{\Idealdomain}{{\mathscr B}}
\newcommand{\topspace}{{X}}
\newcommand{\pt}{x}
\newcommand{\homeo}{{\sigma}}
\newcommand{\prodhomeo}{{\homeo\times\idmap_\T}}
\newcommand{\orbit}[1]{{\Z\cdot{#1}}}
\newcommand{\orbitclosure}[1]{{\overline{\Z\cdot{#1}}}}
\newcommand{\orbitclosures}{{\mathcal{OC}}}
\newcommand{\dynsysshort}{{\Sigma}}
\newcommand{\per}{{\textup{Per}}}
\newcommand{\fix}{{\textup{Fix}}}
\newcommand{\aperpoints}{{\textup{Aper}(\homeo)}}
\newcommand{\perpoints}{{\per(\homeo)}}
\newcommand{\perpnew}{{{\per}_p(\homeo)}}
\newcommand{\setofperpoints}{{S_{\textup{p}}}}
\newcommand{\setofaperpoints}{{S_{\textup{ap}}}}
\newcommand{\coeffalg}{{C(\topspace)}}
\newcommand{\lone}{{\ell^1(\dynsysshort)}}
\newcommand{\loneZ}{{\ell^1(\Z)}}
\newcommand{\cstar}{{C^\ast(\dynsysshort)}}
\newcommand{\fs}{{c_{00}(\dynsysshort)}}
\newcommand{\loneelement}[1]{{\sum_{#1} a_{#1}\delta^{#1}}}
\newcommand{\piaper}[1]{{P_{#1}}}
\newcommand{\piper}[2]{{P_{#1,#2}}}
\newcommand{\piperintersection}[1]{{Q_{#1}}}
\newcommand\Laurentseries[1]{{#1((\delta))}}
\newcommand\Laurentpolynomials[1]{{#1(\delta)}}
\newcommand{\Eone}{E}
\newcommand{\amap}{\alpha}
\newcommand{\bmap}{\beta}
\newcommand{\fixba}{\fix(\bmap\circ\amap)}
\newcommand{\fixab}{\fix(\amap\circ\bmap)}
\newcommand{\st}{\phi}
\newcommand{\Fourier}{{\,\mathfrak F}}
\newcommand{\XtimesT}{{\topspace\times\T}}
\newcommand{\setofroots}[3]{{\{{#1}\in\T : {#1}^{#2}={#3}\}}}
\newcommand{\standardsetofroots}{{\setofroots{\mu}{p}{\lambda}}}
\theoremstyle{plain}
\newtheorem{theorem}{Theorem}[section]
\newtheorem{proposition}[theorem]{Proposition}
\newtheorem{lemma}[theorem]{Lemma}
\newtheorem{corollary}[theorem]{Corollary}
\theoremstyle{definition}
\newtheorem{definition}[theorem]{Definition}
\newtheorem{example}[theorem]{Example}
\newtheorem{remark}[theorem]{Remark}
\newtheorem*{assumption}{Assumption}
\numberwithin{equation}{section}
\begin{document}


\title[Noncommutative spectral synthesis]{Noncommutative spectral synthesis for the involutive Banach algebra associated with a topological dynamical system}

\author{Marcel de Jeu}
\address{Marcel de Jeu, Mathematical Institute, Leiden University, P.O.\ Box 9512, 2300 RA Leiden, The Netherlands}
\email{mdejeu@math.leidenuniv.nl}

\author{Jun Tomiyama}
\address{Jun Tomiyama, Department of Mathematics, Tokyo Metropolitan University, Minami-Osawa, Hachioji City, Japan}
\email{juntomi@med.email.ne.jp}

\subjclass[2010]{Primary 46K99; Secondary 46H10, 47L65}

\keywords{Involutive Banach algebra, crossed product, structure of ideals, spectral synthesis, topological dynamical system}


\begin{abstract}
If $\dynsysshort=(\topspace,\homeo)$ is a topological dynamical system, where $\topspace$ is a compact Hausdorff space and $\homeo$ is a homeomorphism of $\topspace$, then a crossed product involutive Banach algebra $\lone$ is naturally associated with these data. If $\topspace$ consists of one point, then $\lone$ is the group algebra of the integers. In this paper, we study spectral synthesis for the closed ideals of $\lone$ in two versions, one modeled after $\coeffalg$, and one modeled after $\loneZ$. We identify the closed ideals which are equal to (what is the analogue of) the kernel of their hull, and determine when this holds for all closed ideals, i.e., when spectral synthesis holds. In both models, this is the case precisely when $\dynsysshort$ is free.
\end{abstract}

\maketitle


\section{Introduction}\label{sec:introduction}

Suppose $G$ is a locally compact abelian group, with dual group $\Gdual$. If $I\subset L^1(G)$ is a closed ideal, then its hull $\hull(I)\subset\Gdual$ is the closed set of common zeroes of the Fourier transforms of the elements of $I$, or, equivalently, the set of all maximal modular ideals of $L^1(G)$ containing $I$. If $S\subset\Gdual$ is closed, then its kernel $\kernel(S)$ is the closed ideal of $L^1(G)$ defined as $\kernel(S):=\bigcap_{\mathfrak m\in S}\mathfrak{m}$. It is a non-trivial fact (the regularity of $L^1(G)$) that $\hull\kernel(S)=S$ for all closed $S\subset\Gdual$, and a closed subset $S$ of $\Gdual$ is called a set of spectral synthesis if $\kernel(S)$ is the \emph{only} closed ideal $I$ of $L^1(G)$ such that $\hull(I)=S$. Determining sets of spectral synthesis is a delicate problem, and one may ask whether it is possible that \emph{all} closed subsets of $\Gdual$ are sets of spectral synthesis, in which case spectral synthesis is said to hold for $G$. This is asking for the injectivity of $\hull$ on the set of closed ideals, or, equivalently, requiring that each closed ideal $I$ is of the form $\kernel(S)$ for some closed subset $S$ of $\Gdual$ (which is then necessarily equal to $\hull(I)$). In that case, one can think of each $I$ as being synthesised, via intersection, from the maximal modular ideal---which one can regard as the evidently existing closed ideals of $L^1(G)$---in $\kernel(I)$. Alternatively, one can view each $I$ as being reconstructed from the set of common zeroes of the Fourier transforms of the elements of $I$. This problem was finally settled by Malliavin \cite{M}, \cite[Theorem 7.6.1]{R}: spectral synthesis holds for $G$ if and only if $G$ is compact. For compact $G$, all closed ideals of $L^1(G)$ are then self-adjoint, and the converse is also true, cf.\ \cite[Theorem~7.7.1]{R}. Hence all closed ideals of $L^1(G)$ are self-adjoint if and only if $G$ is compact.

Spectral synthesis has also been studied for other semisimple regular commutative Banach algebras than $L^1(G)$; see, for example, \cite[Chapter~ 8]{L}, or \cite[Chapter~5]{K}. Apart from $L^1(G)$, with $G$ compact and abelian, the only other common class of commutative Banach algebras for which spectral synthesis holds, and that we are aware of, are the algebras $C_0(\topspace)$, for a locally compact Hausdorff space $\topspace$. Passing to possibly noncommutative algebras, we note that for general $C^\ast$-algebras every closed ideal is the intersection of primitive ideals \cite{D}. Thus, here again every closed ideal can be synthesised from evidently present ideals, emerging from the representation theory of the algebra.

In this paper, we consider spectral synthesis for the involutive Banach algebra $\lone$ that is naturally associated with a dynamical system $\dynsysshort$, consisting of a compact Hausdorff space $\topspace$ and a homeomorphism $\homeo$ of $\topspace$. Its enveloping $C^*$-algebra $\cstar$ is well studied, but the investigation of this underlying involutive Banach algebra itself is of a more recent nature and was initiated in \cite{DST} and \cite{DT}. These algebras are considerably more complicated than their $C^*$-envelopes, as already becomes obvious from the case where $\topspace$ consists of one point. In that case, $\lone=\loneZ$, so that its $C^*$-envelope is $C(\T)$, and whereas $C(\T)$ has only self-adjoint closed ideals, $\loneZ$ also has non-self-adjoint closed ideals; and whereas spectral synthesis holds for $C(\T)$, it fails for $\loneZ$. For general $\topspace$, $\cstar$ naturally has only self-adjoint closed ideals, but it is known that this holds for the underlying algebra $\lone$ precisely when $\dynsysshort$ is free \cite[Theorem~4.4]{DST}, as an analogue of the result that $L^1(G)$ has only self-adjoint closed ideals precisely when $G$ is compact. Can we, then, also settle the matter of the validity of spectral synthesis for $\lone$, as it has been settled for $L^1(G)$?

In answering this question we have interpreted ``spectral synthesis'' as the reconstruction of a closed ideal of $\lone$ from a suitably defined set of common zeroes of functions associated to all the elements of the ideal, thus (as in the above examples) establishing that it belongs to an evidently existing family of ideals of $\lone$. There are two natural candidates to do this. Firstly, as in \cite{T3}, one can take $\coeffalg$ as a model, and ask which closed ideals are determined by the common zeroes of the coefficients of the elements of the ideal. Theorem~\ref{t:ideal_well_behaved} provides a number of answers, one of which is that these are precisely the ideals that are intersections of closed ideals naturally associated to orbit closures. Consequently, such ideals are intersections of primitive ideals, but the latter cannot be chosen freely. Spectral synthesis in this model then holds when every closed ideal can be so reconstructed, and Theorem~\ref{t:all_ideals_well_behaved} gives a number of equivalent conditions for this to hold, one of these being requiring $\dynsysshort$ to be free. Secondly, one can take $L^1(G)$, or in this case $\loneZ$, as a model, and consider the common zeroes of all (generalised) Fourier transforms of the elements of the closed ideal. In that case, Theorem~\ref{t:fixed_points_of_IZ} asserts that the reconstructible ideals are precisely the intersections of (freely chosen) primitive ideals naturally associated to the points of $\topspace$. According to Theorem~\ref{t:all_ideals_kernels}, spectral synthesis holds in this model precisely when $\dynsysshort$ is free again.

Thus the question of global spectral synthesis in both models has been settled. If it does not hold, then the study of spectral sets becomes relevant, but we leave that for future research.

\smallskip

To conclude this introduction, let us mention that these algebras $\lone$ are rather well accessible concrete examples of involutive Banach algebras, and with a rich variety of possible properties, depending on the dynamics. Major open issues are the question whether $\lone$ is always Hermitian, whether the closure of a proper ideal of $\lone$ in $\cstar$ is always proper (see Remark~\ref{r:C_closure_proper}), and whether each self-adjoint closed ideal is the kernel of an involutive representation of $\lone$. We hope to be able to report further on these algebras and their structure in the future.

\smallskip

This paper is organised as follows.

Section~\ref{sec:preliminaries} contains not only the necessary notions and notations, but also a detailed investigation of three families of closed ideals of $\lone$. These ideals play a key role in the subsequent sections.

Section~\ref{sec:function_space_model} is concerned with spectral synthesis in the $\coeffalg$-model. The main results are Theorem~\ref{t:ideal_well_behaved} (identifying the reconstructible ideals), Theorem~\ref{t:hull_kernel_setup_function_model} (formulating the properties of the analogues of the hull and kernel operators), and Theorem~\ref{t:all_ideals_well_behaved} (determining when spectral synthesis holds in this model).

Section~\ref{sec:l_one_model} takes up spectral synthesis in the $\loneZ$-model. The main results are now Theorem~\ref{t:fixed_points_of_IZ} (identifying the reconstructible ideals), Theorem~\ref{t:hull_kernel_setup_L_1_model} and  (formulating the properties of the analogies of the hull and kernel operator), and Theorem~\ref{t:all_ideals_kernels} (determining when spectral synthesis holds in this model).

Appendix~\ref{sec:appendix} contains the underlying abstract framework of hull-kernel-type operators. It is completely elementary, but we know of no reference for the basic properties of the combination of such operators, which we use in both Section~\ref{sec:function_space_model} and Section~\ref{sec:l_one_model}. By explicitly including them here we also hope to avoid any future mildly annoying verification of these elementary, but not entirely obvious, generalities in other examples.

\section{Preliminaries}\label{sec:preliminaries}

In this section, we establish the basic notations and introduce the algebra $\lone$, along with three families of ideals. In Section~\ref{sec:function_space_model} and $Section~\ref{sec:l_one_model}$, the ideals thus obtained will play a role similar to that of the maximal modular ideals in spectral synthesis for $L^1(G)$, where $G$ is a locally compact abelian group.

Throughout this paper, $\topspace$ is a non-empty compact Hausdorff space, and $\homeo:\topspace\to\topspace$ is a homeomorphism. If $\pt\in\topspace$, we write $\orbit{x}$ and $\orbitclosure{x}$ for its orbit and the closure of its orbit, respectively. We let $\aperpoints$ and $\perpoints$ denote the aperiodic and the periodic points of $\homeo$, respectively. For $p\geq 1$, let $\perpnew$ be the set of points of period $p$.

The involutive algebra of continuous complex-valued functions on $\topspace$ is denoted by $\coeffalg$, and we write $\alpha$ for be the involutive automorphism of $\coeffalg$ induced by $\homeo$ via $\alpha(f) := f \circ \homeo^{-1}$, for $f \in\coeffalg$.

If $S\subset\topspace$, then we let $\kernel (S)=\{f\in\topspace : f\rest{S}=0\}$ be its usual kernel. If $I$ is an ideal of $\coeffalg$, let $\hull (I)=\{x\in\topspace : f(x)=0\textup{ for all }f\in I\}$ be its usual hull. Then $\kernel(S)=\kernel(\bar S)$, $\hull(I)=\hull(\bar I)$, and $\hull$ and $\kernel$ are mutually inverse bijections between the set of closed ideals of $\coeffalg$ on the one hand, and the set of closed subsets of $\topspace$ on the other hand. Their restrictions are mutually inverse bijections between the set of $\alpha$-invariant closed ideals of $\coeffalg$ and the set of $\homeo$-invariant closed subsets of $\topspace$.

\subsection{$\mathbf{\lone}$ and behaviour of ideals}\label{subsec:lone_introduction}

Via $n \mapsto \alpha^n$, the integers act on $\coeffalg$. With $\Vert\cdot\Vert$ denoting the supremum norm on $\coeffalg$, we let
\[
\lone = \{a: \mathbb{Z} \to\coeffalg : \Vert a \Vert := \sum_{n} \Vert a(n)\Vert < \infty\}.
\]
We supply $\lone$ with the usual twisted convolution as multiplication,
\[
(aa^\prime) (n) := \sum_{k \in \Z} a(k) \cdot \alpha^k (a^\prime(n-k))\quad(a, a^\prime \in \lone),
\]
and define an involution by
\[
a^* (n) = \overline{\alpha^n (a(-n))}\quad (a\in\lone),
\]
so that it becomes a unital Banach $\sp\ast$-algebra with isometric involution. We let $\cstar$ denote its enveloping $C^\ast$-algebra. If $\topspace$ consists of one point, then $\lone$ is the group algebra $\loneZ$, and $\cstar$ can be identified with $C(\T)$.

A convenient way to work with $\lone$ is provided by the following. For $n,m \in \mathbb{Z}$, let
\begin{equation*}
  \chi_{\{n\}} (m) =
  \begin{cases}
    1 &\text{if }m =n;\\
    0 &\text{if }m \neq n,
  \end{cases}
\end{equation*}
where the constants denote the corresponding constant functions in $\coeffalg$. Then $\chi_{\{0\}}$ is the identity element of $\lone$. Let $\delta = \chi_{\{1\}}$; then $\chi_{\{-1\}}=\delta^{-1}=\delta^*$. If we put $\delta^0=\chi_{\{0\}}$, then $\delta^n = \chi_{\{n\}}$, for all $n \in \mathbb{Z}$. We may view $\coeffalg$ as a closed abelian $^*$-subalgebra of $\lone$, namely as $\{a_0 \delta^0 \, : \, a_0 \in \coeffalg\}$. If $a \in \lone$, and if we write $a(n)=a_n$ for short, then $a= \loneelement{n}$, and $\Vert a \Vert=\sum_n \Vert a_n\Vert<\infty$.
In the rest of this paper we will constantly use this series representation $a= \loneelement{n}$ of an arbitrary element $a\in\lone$, for uniquely determined $a_n\in\coeffalg$. Thus $\lone$ is generated as a unital Banach algebra by an isometrically isomorphic copy of $\coeffalg$ and the elements $\delta$ and $\delta^{-1}$, subject to the relation $\delta f \delta^{-1}=\alpha (f)= f\circ\homeo^{-1}$, for $f \in\coeffalg$. The isometric involution is determined by $f^*=\overline f$ ($f\in\coeffalg$), and $\delta^*=\delta^{-1}$. Hence the inner automorphism $\textup{Ad\,}\delta$ of $\lone$ is involutive, it leaves $\coeffalg$ invariant, and its restriction to $\coeffalg$ is $\alpha$.

Let $\fs$ denote the finitely supported elements of $\lone$. It is a dense involutive subalgebra.

\begin{definition}\label{d:primitive_ideal}
In this paper, a \emph{primitive ideal of $\lone$} is the kernel of a topologically irreducible unital involutive representation of $\lone$ on a Hilbert space.
\end{definition}

Note that this definition is not the one as used in, e.g., \cite{BD}, where a primitive ideal is defined in a purely algebraic fashion as the kernel of an abstract algebraically irreducible representation on an arbitrary complex vector space. Our definition is convenient for our purposes, to shorten terminology somewhat, and modeled after the situation for $C^*$-algebras, for which the primitive ideals as in \cite{BD} are precisely the ideals defined analogously to Definition~\ref{d:primitive_ideal} (see \cite{D} for this non-trivial fact).
The usual argument shows that an involutive representation of $\lone$ on a Hilbert space is automatically continuous, in fact even contractive. Hence a primitive ideal is a self-adjoint closed ideal.

If $L\subset\coeffalg$ is a linear subspace, let:
\begin{enumerate}
\item $\Laurentpolynomials{L}:=\{a=\loneelement{n}\in\fs : a_n\in L\textup{ for all }n\in\Z\}$;
\item $\Laurentseries{L}:=\{a=\loneelement{n}\in\lone : a_n\in L\textup{ for all }n\in\Z\}$.
\end{enumerate}

The following is readily verified.

\begin{lemma}\label{l:Laurent_properties} Let $L$ be a linear subspace of $\coeffalg$. Then:
\begin{enumerate}
\item $\overline{\Laurentpolynomials{L}}=\overline{\Laurentseries{L}}=\Laurentseries{\bar L}$;
\item $\Laurentpolynomials{L}$ is closed in $\lone$ if and only if $L=\{0\}$, and $\Laurentseries{L}$ is closed if and only if $L$ is closed.
\item $\Laurentpolynomials{L}$ is an ideal of $\fs$ if and only if $L$ is an $\alpha$-invariant ideal of $\coeffalg$;
\item $\Laurentseries{L}$ is a closed ideal of $\lone$ if and only if $L$ is an $\alpha$-invariant closed ideal of $\coeffalg$. In that case, $\Laurentseries{L}$ is self-adjoint;
\item If $\{L_\alpha : \alpha\in A\}$ is a collection of linear subspaces of $\coeffalg$, then:
    \begin{enumerate}
    \item $\sum_{\alpha\in A}\Laurentpolynomials{L_\alpha}=\Laurentpolynomials{(\sum_{\alpha\in A}L_\alpha )}$, and $\bigcap_{\alpha\in A}\Laurentpolynomials{L_\alpha}=\Laurentpolynomials{(\bigcap_{\alpha\in A}L_\alpha)}$;
    \item $\sum_{\alpha\in A}\Laurentseries{L_\alpha}\subset\Laurentseries{(\sum_{\alpha\in A}L_\alpha)}$, and $\bigcap_{\alpha\in A}\Laurentseries{L_\alpha}=\Laurentseries{(\bigcap_{\alpha\in A}L_\alpha)}$.
    \end{enumerate}
\end{enumerate}
\end{lemma}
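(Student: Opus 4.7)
The plan is to exploit throughout that the coefficient maps $a\mapsto a_n$ from $\lone$ to $\coeffalg$ are contractive, since $\|a_n\|\leq\sum_k\|a_k\|=\|a\|$. This makes every condition of the form ``$a_n\in L$'' a closed condition whenever $L$ is closed, and renders the embedding $f\mapsto f\delta^0$ of $\coeffalg$ into $\lone$ isometric. I would prove~(2) first, deduce~(1) from it, and then handle~(3),~(4), and~(5).

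For~(2), I would observe that $\Laurentseries{L}=\bigcap_n\{a\in\lone : a_n\in L\}$ is closed whenever $L$ is, since each condition is a preimage of $L$ under a contraction; conversely, $L$ is the preimage of $\Laurentseries{L}$ under the isometric embedding $f\mapsto f\delta^0$, so $L$ inherits closedness from $\Laurentseries{L}$. For the first claim, if $0\neq f\in L$, then $a:=\sum_n 2^{-|n|}f\delta^n\in\Laurentseries{L}\setminus\fs$ is the norm-limit of its partial sums in $\Laurentpolynomials{L}$, showing that $\Laurentpolynomials{L}$ is not closed. For~(1), the inclusions $\Laurentpolynomials{L}\subset\Laurentseries{L}\subset\Laurentseries{\bar L}$ combined with~(2) applied to $\bar L$ give $\overline{\Laurentpolynomials{L}}\subset\overline{\Laurentseries{L}}\subset\Laurentseries{\bar L}$; the reverse inclusion follows by first truncating any $a\in\Laurentseries{\bar L}$ to a large enough range $|n|\leq N$ and then approximating each coefficient $a_n\in\bar L$ of the truncation by an element of $L$.

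For~(3), the product formula $f\delta^m\cdot g\delta^n=f\,\alpha^m(g)\delta^{m+n}$ does all the work: taking $f\in L$, $g\in\coeffalg$, $m=n=0$ forces $fg\in L$, while $\delta^m\cdot f\delta^0=\alpha^m(f)\delta^m$ forces $\alpha$-invariance of $L$, and the converse is equally direct. For~(4), if $L$ is a closed $\alpha$-invariant ideal of $\coeffalg$, then by~(1) and~(2), $\Laurentseries{L}=\overline{\Laurentpolynomials{L}}$ is closed, and the closure of an ideal of the dense subalgebra $\fs$ is an ideal of $\lone$; self-adjointness follows from $a^*(n)=\overline{\alpha^n(a(-n))}$ together with the $\alpha$-invariance of $L$ and the automatic stability under complex conjugation of any closed ideal of the commutative $C^*$-algebra $\coeffalg$. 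The converse in~(4) is immediate from~(2) applied to $L$ and from~(3) applied to $\Laurentpolynomials{L}=\fs\cap\Laurentseries{L}$. Part~(5) falls out coordinatewise from the definitions; the only non-equality, for sums in the $\ell^1$-version, is because a single element of $\Laurentseries{(\sum_\alpha L_\alpha)}$ need not have its coefficients drawn from a common finite subcollection of the $L_\alpha$.

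I do not anticipate any real obstacle: the lemma is genuinely routine, and the only analytic input beyond the definitions is the contractivity of coefficient extraction and the density of $\fs$ in $\lone$.
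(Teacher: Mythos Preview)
Your argument is correct in every part. The paper itself offers no proof of this lemma, stating only that it ``is readily verified''; your write-up supplies exactly the routine details the authors had in mind, with the contractivity of the coefficient maps and the density of $\fs$ as the only analytic ingredients.
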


As in \cite{T4} for $\cstar$, we distinguish three types of ideals in $\lone$. For their definition we use
the canonical involutive norm one projection $\Eone:\lone\to\coeffalg$, given by $\Eone(a)=a_0$, for $a=\loneelement{n}\in\lone$. The following properties are easy to check.

\begin{lemma}\label{l:basic_E_properties}Let $a=\loneelement{n}\in\lone$. Then:
\begin{enumerate}
\item $\Eone(f\cdot a\cdot g)=fg\Eone(a)\quad (f,g\in\coeffalg)$;
\item $\Eone(\delta\cdot a\cdot\delta^{-1})=\delta\cdot \Eone(a) \cdot\delta^{-1}=\alpha(\Eone(a))=\Eone(a)\circ\sigma^{-1}$;
\item $\Eone(a^\ast a)=\sum_{n}|a_n\circ\homeo^n|^2$;
\item $\Eone$ is injective on the positive cone of $\lone$;
\item If $I$ is an ideal of $\lone$, then:
\begin{enumerate}
\item $\Eone(I)$ is an $\alpha$-invariant ideal of $\coeffalg$;
\item $\Eone(I)=\{a_n : a=\loneelement{n}\in I\}$;
\item $I\subset\Laurentseries{\Eone(I)}$;
\item $\Eone(I)=\{0\}$ if and only if $I=\{0\}$.
\end{enumerate}
\end{enumerate}
If $\{L_\alpha : \alpha\in A\}$ is a collection of linear subspaces of $\lone$, then $\Eone(\sum_{\alpha\in A} L_\alpha)=\sum_{\alpha\in A}\Eone(L_\alpha)$.
\end{lemma}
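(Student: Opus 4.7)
The plan is to work through the seven items in order, exploiting the series representation $a = \sum_n a_n \delta^n$ and the fundamental commutation relation $\delta f \delta^{-1} = \alpha(f) = f\circ\homeo^{-1}$.

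For (1), expanding $f\cdot a\cdot g$ and pushing each $g$ past $\delta^n$ via the commutation relation gives $f\cdot a\cdot g = \sum_n f a_n \alpha^n(g) \delta^n$, whose coefficient of $\delta^0$ is $fga_0 = fg\Eone(a)$. For (2), a similar calculation yields $\delta a\delta^{-1} = \sum_n \alpha(a_n)\delta^n$, whose $\delta^0$-coefficient is $\alpha(a_0)$; the remaining equalities are just unwinding notation. For (3), I would first apply the involution formula to get $a^\ast(k)=\overline{a_{-k}\circ\homeo^{-k}}$, then apply the twisted-convolution rule at $n=0$:
\[
\Eone(a^\ast a)=\sum_{k\in\Z} a^\ast(k)\cdot\alpha^k(a_{-k})=\sum_{k\in\Z}\overline{a_{-k}\circ\homeo^{-k}}\cdot(a_{-k}\circ\homeo^{-k})=\sum_{k\in\Z}|a_{-k}\circ\homeo^{-k}|^2,
\]
and reindex $n=-k$.

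Item (4) is the one point where care is needed. By (3), for every $b\in\lone$ one has $\Eone(b^\ast b)=\sum_n|b_n\circ\homeo^n|^2$, so if $\Eone(b^\ast b)=0$ then every $b_n$ vanishes and $b=0$, hence $b^\ast b=0$. To pass from squares to general positive elements, I would write an arbitrary $a\geq 0$ as $a=b^\ast b$; this needs the existence of a square root, which may be obtained via holomorphic (or, if preferred, continuous) functional calculus from $a=a^\ast$ and $\sigma(a)\subset[0,\infty)$, or alternatively by observing that $\coeffalg$ embeds into $\cstar$ and noting that $\Eone$ extends to the standard faithful canonical conditional expectation on $\cstar$ from which the conclusion for $\lone$ is inherited. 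This interplay between $\lone$ and its $C^\ast$-envelope is the one nontrivial point of the lemma.

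For (5), the engine is the identity $\Eone(a\cdot\delta^{-n})=a_n$, which comes from $a\cdot\delta^{-n}=\sum_k a_k\delta^{k-n}$. This immediately gives (5b): since $I$ is an ideal, $a\cdot\delta^{-n}\in I$, so each coefficient $a_n=\Eone(a\delta^{-n})$ of each $a\in I$ lies in $\Eone(I)$, and conversely $\Eone(a)=a_0$ is one of the coefficients. For (5a), linearity of $\Eone$ handles additivity; the identity in (1) gives $f\Eone(a)=\Eone(fa)\in\Eone(I)$ whenever $f\in\coeffalg$, proving $\Eone(I)$ is an ideal; and (2) gives $\alpha(\Eone(a))=\Eone(\delta a\delta^{-1})\in\Eone(I)$, yielding $\alpha$-invariance. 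Statement (5c) is a reformulation of (5b), and (5d) is immediate from (5c) because $\Laurentseries{\{0\}}=\{0\}$. Finally, the statement $\Eone(\sum_{\alpha\in A} L_\alpha)=\sum_{\alpha\in A}\Eone(L_\alpha)$ is linearity of $\Eone$ applied to a (finite) sum of elements from the various $L_\alpha$.
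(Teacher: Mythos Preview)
Your proof is correct; the paper itself gives no proof beyond asserting that these properties are ``easy to check'', and your computations are exactly the intended routine verifications.

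One remark on (4): the holomorphic functional calculus route does not produce a square root when $0\in\sigma(a)$, and continuous functional calculus is unavailable outside $C^\ast$-algebras, so that suggestion is not viable as written. Your alternative via the $C^\ast$-envelope works (with $\lone$, not $\coeffalg$, as the algebra being embedded into $\cstar$), but in fact less is needed: with the usual Banach $\ast$-algebra convention that the positive cone is generated by elements of the form $b^\ast b$, part (4)---read as faithfulness, i.e.\ $\Eone(a)=0$ and $a\geq 0$ imply $a=0$---follows at once from (3) and linearity of $\Eone$, which is presumably why the authors deem it easy.
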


\begin{definition} Let $I$ be an ideal of $\lone$. Then
\begin{enumerate}
\item $I$ is \emph{well behaved} if $\Eone(I) \subset I$;
\item $I$ is \emph{badly behaved} if $\Eone(I) = \coeffalg$;
\item $I$ is \emph{plain} if $E(I)\neq \coeffalg$ and $\Eone(I)\not\subset I$.
\end{enumerate}
\end{definition}

\begin{example}\label{e:lone_example_behaviour}
If $\topspace$ consists of one point, so that $\lone=\loneZ$, then there are no plain ideals. The well behaved ideals are $\{0\}$ and all ideals containing $c_{00}(\Z)$. The badly behaved ideals are all ideals containing $c_{00}(\Z)$, together with the badly behaved ideals which are not well behaved; the latter family admitting no explicit description. The picture simplifies when restricting our attention to closed ideals: there are no plain closed ideals in $\loneZ$, and the only well behaved closed ideals are $\{0\}$ and $\loneZ$. The badly behaved closed ideals of $\loneZ$ are precisely the non-zero closed ideals. We will see later (cf.\ Proposition~\ref{p:behaviour_of_families}) how plain self-adjoint closed ideals can sometimes be obtained for non-trivial $\topspace$.
\end{example}

The following Lemma follows easily from the definitions.

\begin{lemma}\label{l:behaved_lemma}
\begin{enumerate}
\item Arbitrary intersections and sums of well behaved ideals are well behaved.
\item If the ideal $I$ is badly behaved and $J\supset I$ for an ideal $J$, then $J$ is badly behaved.
\item An ideal $I$ is both well behaved and badly behaved if and only if it contains $\fs$; all other ideals fall into precisely one category.
\item The closed ideal $\lone$ is the only closed ideal that is both well behaved and badly behaved; all other closed ideals fall into precisely one category.
\end{enumerate}
\end{lemma}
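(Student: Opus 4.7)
All four parts are direct consequences of the definitions combined with the elementary properties of $\Eone$ collected in Lemma~\ref{l:basic_E_properties}. My plan is to handle them in the stated order.

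For (1), I would first note that for intersections, if each $I_\alpha$ satisfies $\Eone(I_\alpha)\subset I_\alpha$, then for any $a\in\bigcap_\alpha I_\alpha$, $\Eone(a)\in\Eone(I_\alpha)\subset I_\alpha$ for every $\alpha$, so $\Eone(a)\in\bigcap_\alpha I_\alpha$. For sums, I would invoke the last clause of Lemma~\ref{l:basic_E_properties}, namely $\Eone(\sum_\alpha I_\alpha)=\sum_\alpha \Eone(I_\alpha)\subset\sum_\alpha I_\alpha$. Part (2) is immediate: if $\Eone(I)=\coeffalg$ and $I\subset J$, then $\coeffalg=\Eone(I)\subset\Eone(J)\subset\coeffalg$, forcing equality.

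For (3), the forward direction is the main point. Suppose $I$ is both well behaved and badly behaved, so $\coeffalg=\Eone(I)\subset I$. Since $I$ is an ideal of $\lone$ containing $\coeffalg$ (identified with $\{a_0\delta^0 : a_0\in\coeffalg\}$) and also containing $\delta^n$ for all $n\in\Z$ (by multiplying the identity $\chi_{\{0\}}\in\coeffalg\subset I$ by $\delta^n$), all elements of the form $f\delta^n$ with $f\in\coeffalg$ and $n\in\Z$ lie in $I$; their finite sums form $\fs$, so $\fs\subset I$. Conversely, if $\fs\subset I$, then $\coeffalg\subset I$, so $\Eone(I)\supset\Eone(\coeffalg)=\coeffalg$ (hence badly behaved) and $\Eone(I)=\coeffalg\subset I$ (hence well behaved). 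The trichotomy ``well behaved only / badly behaved only / plain'' for the remaining ideals is then merely the unwinding of the three definitions: plain is defined precisely as the complement of the union of well behaved and badly behaved, so once one has removed the ideals that satisfy both conditions, every other ideal falls into exactly one of these three pairwise disjoint classes.

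Part (4) reduces to (3): we must show that the only closed ideal containing $\fs$ is $\lone$ itself. This is immediate from the density of $\fs$ in $\lone$, which was observed in Section~\ref{subsec:lone_introduction}. There is no substantive obstacle in any of the four parts; the only point worth being careful about is making sure the ideal property together with the presence of $\delta^{\pm 1}$ is used in (3) to upgrade $\coeffalg\subset I$ to $\fs\subset I$.
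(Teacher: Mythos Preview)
Your proof is correct and matches the paper's approach; the paper itself simply remarks that the lemma ``follows easily from the definitions'' and gives no further argument, so your sketch is exactly the kind of routine verification intended. One small simplification you might note: in part~(3), once you have $\coeffalg\subset I$ you already have the identity $\chi_{\{0\}}\in I$, so in fact $I=\lone$ immediately (and hence $\fs\subset I$); likewise, part~(4) follows from~(3) without invoking density, since any ideal containing the unit is the whole algebra---but your density argument is of course also valid.
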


\begin{lemma}\label{l:well_behaved_ideals} Let $I$ be an ideal of $\lone$. Then:
\begin{enumerate}
\item $I$ is well behaved if and only if $\Eone(I)=I\cap\coeffalg$;
\item If $I$ is well behaved and closed, then $\Eone(I)=I\cap\coeffalg$ is an $\alpha$-invariant closed ideal of $\coeffalg$.
\end{enumerate}
\end{lemma}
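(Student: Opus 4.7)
The plan is to handle the two statements in sequence, with part (1) being essentially a bookkeeping exercise from the definitions of $\Eone$ and well-behavedness, and part (2) following from part (1) together with results already established in Lemma~\ref{l:basic_E_properties}.

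For the forward direction of (1), suppose $I$ is well behaved, so $\Eone(I)\subset I$. Since $\Eone(I)\subset\coeffalg$ by construction, we immediately get $\Eone(I)\subset I\cap\coeffalg$. For the reverse inclusion, note that if $a\in I\cap\coeffalg$, then viewed in $\lone$ we have $a=a_0\delta^0$ with $a_0=a$, so $\Eone(a)=a$; hence $a=\Eone(a)\in\Eone(I)$. Conversely, if $\Eone(I)=I\cap\coeffalg$, then in particular $\Eone(I)\subset I$, so $I$ is well behaved. This takes care of (1).

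For (2), assume $I$ is well behaved and closed. The fact that $\Eone(I)$ is an $\alpha$-invariant ideal of $\coeffalg$ is exactly Lemma~\ref{l:basic_E_properties}(5)(a), so nothing new needs to be done there. By part (1) we have $\Eone(I)=I\cap\coeffalg$, so it only remains to verify that this set is closed in $\coeffalg$. This follows from two observations: $I$ is closed in $\lone$ by assumption, and the canonical embedding $\coeffalg\hookrightarrow\lone$ as $\{a_0\delta^0 : a_0\in\coeffalg\}$ is an isometry onto a closed subspace of $\lone$ (either because $\Eone$ is a continuous projection whose image is precisely this subspace, or because the evaluation maps $a\mapsto a_n$ are continuous on $\lone$ so that the intersection of their kernels for $n\neq 0$ is closed). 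Hence $I\cap\coeffalg$ is closed in the norm topology of $\coeffalg$, finishing the proof.

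I do not anticipate any genuine obstacle here; both parts are bookkeeping once one has Lemma~\ref{l:basic_E_properties} available. The only mildly subtle point is realizing in part (1) that the inclusion $I\cap\coeffalg\subset\Eone(I)$ is essentially free because $\Eone$ acts as the identity on $\coeffalg$, and in part (2) being careful to record why $\coeffalg$ sits as a closed subspace of $\lone$ so that the intersection inherits closedness.
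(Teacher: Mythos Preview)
Your proof is correct and takes essentially the same approach as the paper: the paper simply writes the chain $\Eone(I)\subset I\cap\coeffalg\subset\Eone(I)$ for the forward direction of (1), declares the converse clear, and calls part (2) obvious. You have merely unpacked these steps in more detail, including the reason why $\coeffalg$ is closed in $\lone$, which the paper leaves implicit.
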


\begin{proof}
         Suppose that $I$ is a well behaved ideal. Then $E(I) \subset I \cap \coeffalg\subset\Eone(I)$.
Hence $\Eone(I) = I \cap \coeffalg$, and the converse is clear. The second part is now obvious.
\end{proof}

This leads to the following description of the well behaved closed ideals and their automatic self-adjointness.

\begin{corollary}\label{c:characterisation_well_behaved_closed_ideals}
\begin{enumerate}
\item The involutive norm one projection $\Eone$ induces a bijection $I\leftrightarrow\Eone(I)$ between the well behaved closed ideals of $\lone$ and the $\alpha$-invariant closed ideals of $\coeffalg$; the inverse map sends an $\alpha$-invariant closed ideal $I^\prime$ of $\coeffalg$ to $\Laurentseries{I^\prime}$.
\item All well behaved closed ideals of $\lone$ are self-adjoint.
\end{enumerate}
\end{corollary}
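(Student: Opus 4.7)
The plan is to assemble the corollary directly from Lemma~\ref{l:well_behaved_ideals} and the properties of the $\Laurentseries{\cdot}$ construction in Lemma~\ref{l:Laurent_properties}; no extra ingredients are needed. The key observation on which everything hinges is that for any well behaved closed ideal $I$, we in fact have the equality $I=\Laurentseries{\Eone(I)}$, strengthening the general inclusion in Lemma~\ref{l:basic_E_properties}(5)(c).

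First I would verify that the map $I\mapsto \Eone(I)$ has image contained in the set of $\alpha$-invariant closed ideals of $\coeffalg$: this is exactly the content of Lemma~\ref{l:well_behaved_ideals}(2). I would then exhibit the proposed inverse. Given an $\alpha$-invariant closed ideal $I^\prime$ of $\coeffalg$, Lemma~\ref{l:Laurent_properties}(4) tells us that $\Laurentseries{I^\prime}$ is a self-adjoint closed ideal of $\lone$. It is well behaved because, by definition of $\Laurentseries{I^\prime}$, we have $\Eone(\Laurentseries{I^\prime})=I^\prime$, and every $f\in I^\prime$ may be viewed as $f\delta^0\in\Laurentseries{I^\prime}$, so $\Eone(\Laurentseries{I^\prime})\subset\Laurentseries{I^\prime}$. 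This also shows one of the two round-trip identities, namely $\Eone(\Laurentseries{I^\prime})=I^\prime$.

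The main step is the other round-trip identity: if $I$ is a well behaved closed ideal of $\lone$, then $I=\Laurentseries{\Eone(I)}$. The inclusion $I\subset\Laurentseries{\Eone(I)}$ is Lemma~\ref{l:basic_E_properties}(5)(c). For the reverse inclusion, let $a=\loneelement{n}\in\Laurentseries{\Eone(I)}$. Since $I$ is well behaved, $\Eone(I)\subset I$, so each coefficient $a_n$ lies in $I$; because $I$ is an ideal, $a_n\delta^n\in I$ for all $n\in\Z$. As $\sum_n \Vert a_n\delta^n\Vert=\sum_n\Vert a_n\Vert <\infty$, the symmetric partial sums $\sum_{|n|\leq N} a_n\delta^n$ lie in $I$ and converge in norm to $a$. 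Since $I$ is closed, $a\in I$. This is the only step that requires genuine (though entirely routine) work, and I do not foresee any real obstacle.

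Part (2) is then immediate: every well behaved closed ideal $I$ equals $\Laurentseries{\Eone(I)}$, and Lemma~\ref{l:Laurent_properties}(4) says that $\Laurentseries{I^\prime}$ is self-adjoint whenever $I^\prime$ is an $\alpha$-invariant closed ideal of $\coeffalg$. Applied to $I^\prime=\Eone(I)$, which is an $\alpha$-invariant closed ideal by Lemma~\ref{l:well_behaved_ideals}(2), this yields that $I$ is self-adjoint.
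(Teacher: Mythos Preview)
Your proof is correct and is precisely the routine argument the paper leaves to the reader; the paper itself only remarks that the first part is routine and that the second follows because all closed ideals of $\coeffalg$ are self-adjoint, which is exactly what underlies your appeal to Lemma~\ref{l:Laurent_properties}(4). The key identity $I=\Laurentseries{\Eone(I)}$ for well behaved closed $I$ that you isolate is indeed the core of the matter and reappears later as part~(3) of Theorem~\ref{t:ideal_well_behaved}.
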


\begin{proof}
The routine proof of the first part is left to the reader. For the second, one need then merely note that all closed ideals of $\coeffalg$ are self-adjoint.
\end{proof}

Although we will not need it, the following result is worth noticing: a quotient of $\lone$ by a well behaved closed ideal is again an algebra in our class.

\begin{proposition}
Let $I$ be a well behaved closed ideal of $\lone$. Put $S_I=\hull(\Eone(I))$, so that $S_I$ is a $\homeo$-invariant closed subset of $\topspace$, and let $\homeo_I$ denote the restriction of $\homeo$ to $S_I$. Let $\dynsysshort_I=(S_I,\homeo_I)$ denote the resulting dynamical system, with associated algebra $\ell^1(\dynsysshort_I)$, generated by $C(S_I)$ and a unitary $\delta_I$. Then the map, sending $\sum_n a_n\delta^n\in\lone$ to $\sum_n a_n\rest{S_I}\delta_I^n$, is a contractive unital involutive Banach algebra homomorphism from $\lone$ onto $\ell^1(\dynsysshort_I)$, inducing an isometric involutive Banach algebra isomorphism between $\lone/I$ and $\ell^1(\dynsysshort_I)$.
\end{proposition}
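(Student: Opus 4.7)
The plan is to realize the map in the statement as the natural restriction-induced map, verify that it is a contractive, unital, involutive Banach algebra homomorphism of $\lone$ onto $\ell^1(\dynsysshort_I)$ with kernel exactly $I$, and then upgrade the induced quotient map to an isometry by means of norm-preserving Tietze extensions.

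First, because $I$ is a well behaved closed ideal, Corollary~\ref{c:characterisation_well_behaved_closed_ideals} gives that $\Eone(I)$ is an $\alpha$-invariant closed ideal of $\coeffalg$, so the hull--kernel correspondence recalled in Section~\ref{sec:preliminaries} guarantees that $S_I=\hull(\Eone(I))$ is a $\homeo$-invariant closed subset of $\topspace$ with $\kernel(S_I)=\Eone(I)$. The restriction map $r\colon\coeffalg\to C(S_I)$, $f\mapsto f\rest{S_I}$, is then a surjective contractive unital involutive homomorphism that intertwines $\alpha$ with $\alpha_I$. I would then define $\pi$ on the dense subalgebra $\fs$ by $\pi(\sum_n a_n\delta^n)=\sum_n r(a_n)\delta_I^n$ and extend by continuity to $\lone$. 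Contractivity is immediate, since $\|r(a_n)\|_\infty\leq\|a_n\|_\infty$; unitality and involutiveness are clear; multiplicativity on $\fs$ follows from a direct computation with the twisted convolution using $r\circ\alpha=\alpha_I\circ r$, and then passes to $\lone$ by continuity.

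For the kernel, $\pi(a)=0$ exactly when every $a_n$ lies in $\kernel(S_I)=\Eone(I)$, so $\ker\pi=\Laurentseries{\Eone(I)}$, which equals $I$ by Corollary~\ref{c:characterisation_well_behaved_closed_ideals}. Finally, to obtain surjectivity and the isometric property in one stroke, given $b=\sum_n b_n\delta_I^n\in\ell^1(\dynsysshort_I)$ I would use the Tietze extension theorem on the compact Hausdorff space $\topspace$ to lift each $b_n\in C(S_I)$ to $\tilde a_n\in\coeffalg$ with $\|\tilde a_n\|_\infty=\|b_n\|_\infty$. Then $a=\sum_n\tilde a_n\delta^n\in\lone$ satisfies $\|a\|=\|b\|$ and $\pi(a)=b$, which yields surjectivity and shows that the quotient norm on $\lone/I$ is at most $\|b\|$; the reverse inequality comes from contractivity of $\pi$. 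Hence the induced map $\lone/I\to\ell^1(\dynsysshort_I)$ is the desired isometric involutive Banach algebra isomorphism. The only mild technical point is the norm-preserving Tietze lift, which is standard for compact Hausdorff spaces.
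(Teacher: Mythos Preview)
Your proposal is correct and follows essentially the same route as the paper: identify the kernel of the restriction-induced map as $\Laurentseries{\kernel(S_I)}=\Laurentseries{\Eone(I)}=I$, and use Tietze-type extensions coefficientwise to obtain surjectivity together with the isometry of the induced quotient map. The only cosmetic difference is that the paper phrases the isometry step via the open-unit-ball characterization of quotient maps combined with the automatic isometry of $C^\ast$-algebra quotients for $\coeffalg\to C(S_I)$, whereas you invoke norm-preserving Tietze extensions directly; the content is the same.
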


\begin{proof}
We recall that a bounded surjective linear map $T:X\to Y$ between two Banach spaces is a quotient map, i.e., it induces an isometry between $X/\Ker T$ and $Y$, precisely when $T$ maps the open unit ball of $X$ onto that of $Y$. Now $\textup{Res}:\coeffalg\to C(S_I)$ is a surjective (by Tietze's Theorem) morphism of $C^\ast$-algebras. The induced map between $\coeffalg/\kernel(S_I)$ and $C(S_I)$ is then an isomorphism of $C^\ast$-algebras, hence automatically isometric. Therefore $\textup{Res}:\coeffalg\to C(S_I)$ is a quotient map. Since there are only countably many coefficients to be taken into account, it is now clear that the described map between $\lone$ and $\ell^1(\dynsysshort_I)$ is surjective, and in fact again a quotient map. As it is easily checked to be a unital involutive Banach algebra homomorphism, and its kernel is equal to $\Laurentseries{\kernel(S_I)}=\Laurentseries{\kernel\hull (\Eone(I)}=\Laurentseries{\Eone(I)}=I$, it induces an isometric involutive Banach algebra isomorphism between $\lone/I$ and $\ell^1(\dynsysshort_I)$.
\end{proof}

\subsection{Three families of ideals}\label{subsec:families_of_ideals}

We will now describe a number of irreducible involutive representations of $\lone$ that made an earlier appearance in \cite{DT}, and investigate their kernels. The ideals thus obtained will be an important ingredient in Sections~\ref{sec:function_space_model} and~\ref{sec:l_one_model}.

As a consequence of the general theory, cf.\ \cite{D}, there is bijection (via extension and restriction) between the pure states of $\lone$ and $\cstar$, and between the irreducible GNS-representations of the two algebras. Now, for each $\pt\in\topspace$, point evaluation is a pure state $\ev_\pt$ on $\coeffalg$, and all pure state extensions of $\ev_\pt$ to $\cstar$ therefore yield irreducible GNS-representations of $\cstar$, hence of $\lone$ by restriction. Since these pure state extensions of point evaluations to $\cstar$ (hence to $\lone)$ are well understood, as are their GNS-representations, we obtain explicitly given irreducible involutive representations of $\lone$. Referring to \cite[\S 4]{T1} for further details and proofs, the description is as follows.

First of all, if $x \in\aperpoints$, then there is a unique pure state extension of $\ev_x$ to $\lone$, which we denote by $\st_{{x}}$. The Hilbert space for the GNS-representation $\pi_{{x}}$ corresponding to $\st_{{x}}$ has an orthonormal basis $(e_k)_{k\in\Z}$, and the representation itself is determined by $\pi_{{x}}(\delta)e_k=e_{k+1}$, for $k\in\Z$, and $\pi_{{x}}(f)e_k=f(\homeo^k {x})e_k$, for $k\in\Z$. The vector $e_0$ reproduces the state $\st_{{x}}$ of $\cstar$.

If ${x}\in\perpoints$, say ${x}\in\perpnew$ $(p\geq 1)$, then the pure state extensions of $\ev_{{x}}$ to $\lone$ are in bijection with the points in $\T$, and we denote these pure states of $\lone$ by $\st_{{x},\lambda}$, for $\lambda\in\T$. The Hilbert space for the GNS-representation $\pi_{{x},\lambda}$ corresponding to $\st_{{x},\lambda}$ has an orthonormal basis $\{e_0,\ldots,e_{p-1}\}$, $\pi_{{x},\lambda}(\delta)$ is represented with respect to this basis by the matrix
\[
\left(
\begin{array}{ccccc}
0 & 0 & \ldots & 0 & \lambda \\
1 & 0 & \ldots & 0 & 0 \\
0 & 1 & \ldots & 0&0\\
\vdots & \vdots & \ddots &\vdots &\vdots \\
0 & 0 & \ldots & 1&0
\end{array}\right),
\]
and, for $f\in\coeffalg$, $\pi_{{x},\lambda}(f)$ is represented with respect to this basis by the matrix
\[
\left(
\begin{array}{cccc}
f({x}) & 0 & \ldots & 0 \\
0 & f (\homeo {x}) & \ldots & 0 \\
\vdots & \vdots & \ddots & \vdots \\
0 & 0 & \ldots & f(\homeo^{p-1} {x})
\end{array} \right).
\]
The vector $e_0$ reproduces the state $\st_{{x},\lambda}$ of $\lone$.

If $x\in\aperpoints$, then we write $\piaper{x}$ for the primitive ideal $\Ker \pi_x$ of $\lone$, and if $x\in\perpoints$ and $\lambda\in\T$, then the primitive ideal $\Ker \pi_{x,\lambda}$ is denoted by $\piper{x}{\lambda}$. If $x\in\perpoints$, we let $\piperintersection{x}=\bigcap_{\lambda\in\T}\piper{x}{\lambda}$. Note that these ideals $\piaper{x}$, $\piper{x}{\lambda}$ and $\piperintersection{x}$ are self-adjoint. If $\topspace$ consists of one point $\pt$, so that $\lone=\loneZ$, then only the second family $\piper{\pt}{\lambda}$ occurs, and $ \piper{\pt}{\lambda}=\{a\in\lone : \mathcal F (a)(\lambda)=0\}$, where $\mathcal F(a)(\lambda)=\sumn \lambda^n a_n$ $(\lambda\in\T)$ is the usual Fourier transform.\footnote{This definition is more convenient in our setup than the alternative $\mathcal F(a)(\lambda)=\sumn \lambda^{-n} a_n$.} Hence we retrieve the usual maximal modular ideals for $\loneZ$ and we have $\piperintersection{\pt}=\{0\}$ by the injectivity of the Fourier transform.

Unless $\topspace$ consists of one point, there exist unitary equivalences between members of the family $\{\pi_{x} : x\in\aperpoints\}\,\cup\,\{\pi_{x,\lambda} : x\in\perpoints,\lambda\in\T\}$. Hence the indices as used for these families of primitive ideals should not be thought of as a unique parametrisation, quite contrary to the case of $\loneZ$. In Remark~\ref{r:not_type_I_remark} we will give the precise relation between the indices, the unitary equivalence classes of involutive representations, and the primitive ideals.

The following description is the basis for further
 investigation of these ideals.

\begin{proposition}\label{p:belonging_to_kernel}Let $\loneelement{n}\in\lone$.
\begin{enumerate}
\item If $x\in\aperpoints$, then $a\in\piaper{x}$ if and only if $a_n\rest{\orbitclosure{x}}=0$, for all $n\in\Z$.
\item If $x\in\perpnew$ for some $p\geq 1$, and $\lambda\in\T$, then $a\in\piper{x}{\lambda}$ if and only if
\begin{equation}\label{e:vanishing}
\sum_{l\in\Z}\lambda^l a_{lp + j}(x^\prime) = 0,
\end{equation}
for all $j\in\{0,1,\ldots,p-1\}$ and all $x^\prime\in\orbit{x}$.
\item If $x\in\perpoints$, then $a\in\piperintersection{x}$ if and only if $a_n\rest{\orbit{x}}=0$, for all $n\in\Z$.
\end{enumerate}
\end{proposition}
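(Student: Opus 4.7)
The plan is to prove each part by direct computation from the concrete description of the representations $\pi_{x}$ and $\pi_{{x},\lambda}$ given just before the proposition. Everything rests on the observation that an involutive representation of $\lone$ on a Hilbert space is contractive, so the series $\pi(a)=\sum_n \pi(a_n)\pi(\delta)^n$ converges in operator norm and may be applied termwise to any basis vector.

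For (1), I would apply $\pi_{{x}}(a)$ to the basis vector $e_k$. Using $\pi_{{x}}(\delta)^n e_k = e_{k+n}$ together with $\pi_{{x}}(f)e_m = f(\homeo^m {x})e_m$, one obtains $\pi_{{x}}(a)e_k = \sum_n a_n(\homeo^{k+n} {x})\,e_{k+n}$. Since the $e_m$ are orthonormal, vanishing is equivalent to $a_n(\homeo^m {x})=0$ for all $n, m \in \Z$, i.e.\ to every $a_n$ vanishing on $\orbit{{x}}$. By continuity, this passes automatically to $\orbitclosure{{x}}$.

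For (2), the crucial observation is that the prescribed matrix of $\pi_{{x},\lambda}(\delta)$ satisfies $\pi_{{x},\lambda}(\delta)^p = \lambda\cdot I$, as can be checked by iterating the action on each $e_j$. Consequently, writing $n = lp + j$ with $j\in\{0,\ldots,p-1\}$ gives $\pi_{{x},\lambda}(\delta)^n = \lambda^l \pi_{{x},\lambda}(\delta)^j$. Grouping the absolutely convergent series $a = \loneelement{n}$ by residue classes mod $p$ yields
\[
\pi_{{x},\lambda}(a) = \sum_{j=0}^{p-1}\pi_{{x},\lambda}(b_j)\,\pi_{{x},\lambda}(\delta)^j,\quad b_j := \sum_l \lambda^l a_{lp+j}\in\coeffalg.
\]
Applying this to $e_0$ gives $\pi_{{x},\lambda}(a)e_0 = \sum_{j=0}^{p-1} b_j(\homeo^j {x})\,e_j$, so vanishing at $e_0$ is equivalent to~\eqref{e:vanishing} being satisfied at $x'={x}$. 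To deal with all basis vectors, I would use $e_k=\pi_{{x},\lambda}(\delta)^k e_0$ to rewrite $\pi_{{x},\lambda}(a)e_k=0$ as $\pi_{{x},\lambda}(a\delta^k)e_0=0$, then apply the previous step to $a\delta^k$, noting $(a\delta^k)_m=a_{m-k}$. A reindexing with $j'=(j-k)\bmod p$ absorbs the shift into the summation variable $l$ and collapses the resulting conditions to $\sum_l \lambda^l a_{lp+j'}(\homeo^{(j'+k)\bmod p}{x})=0$; as $k$ runs over $\{0,\ldots,p-1\}$ the evaluation point runs over all of $\orbit{{x}}$, which gives exactly~\eqref{e:vanishing}.

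Part (3) is a clean consequence of (2). For fixed ${x}\in\perpnew$, an element $a$ lies in $\piperintersection{{x}}=\bigcap_{\lambda\in\T}\piper{{x}}{\lambda}$ precisely when, for each $j\in\{0,\ldots,p-1\}$ and each $x'\in\orbit{{x}}$, the function $\lambda\mapsto\sum_l \lambda^l a_{lp+j}(x')$ vanishes on all of $\T$. Since $(a_{lp+j}(x'))_{l\in\Z}$ is absolutely summable, the injectivity of the Fourier transform on $\loneZ$ forces $a_{lp+j}(x')=0$ for all $l$, $j$, which is exactly the statement that every $a_n$ vanishes on $\orbit{{x}}$. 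The one place requiring genuine care is the reindexing in (2): one must verify that the equations coming from the different basis vectors $e_k$ supply precisely the conditions~\eqref{e:vanishing} indexed over all $x'\in\orbit{{x}}$, rather than merely at ${x}$ itself. The rest is bookkeeping.
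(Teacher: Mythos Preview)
Your argument is correct and follows essentially the same route as the paper: both proofs hinge on $\pi_{x,\lambda}(\delta)^p=\lambda\cdot\idmap$, regroup the series by residues mod $p$, and then sweep over the orbit (the paper varies the basis vector $e_k$ directly, you equivalently replace $a$ by $a\delta^k$ and work at $e_0$). One slip to fix: from your displayed formula $\pi_{x,\lambda}(a)e_0=\sum_j b_j(\homeo^j x)\,e_j$, vanishing at $e_0$ gives $\sum_l\lambda^l a_{lp+j}(\homeo^j x)=0$ for each $j$, which is \emph{not} ``\eqref{e:vanishing} at $x'=x$'' but rather the $(j,\homeo^j x)$-instance of \eqref{e:vanishing} for each $j$; your subsequent reindexing correctly uses the displayed formula rather than this gloss, so the final conclusion stands.
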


\begin{proof}
The first part follows easily from the requirement $\pi_{x}(a)e_k=0$ $(k\in\Z)$, which is equivalent to $a\in\piaper{x}$. As to the second part, taking into account that $\pi_{x,\lambda}(\delta)^p=\lambda\cdot\idmap$ we see that $a\in\piper{x}{\lambda}$ if and only if
\begin{align*}
0&=\sumn \pi_{x,\lambda}(a_n)\pi_{x,\lambda}(\delta)^n e_k\\
&=\sum_{j=0}^{p-1}\suml \pi_{x,\lambda}(a_{lp+j})\pi_{x,\lambda}(\delta)^{lp+j} e_k\\
&=\sum_{j=0}^{p-1}\left[\suml \lambda^l \pi_{x,\lambda}(a_{lp+j})\right]\pi_{x,\lambda}(\delta)^j e_k,
\end{align*}
for all $k\in\{0,1,\ldots,p-1\}$. Since, for fixed $k$, the elements $\pi_{x,\lambda}(\delta)^j e_k$ $(j=0,\ldots,p-1)$ are, up to non-zero multiples, simply the basis vectors $\{e_0,\ldots,e_{p-1}\}$, and the action of $\coeffalg$ on this basis is diagonal, this holds if and only if
\[
\suml \lambda^l \pi_{x,\lambda}(a_{lp+j})\pi_{x,\lambda}(\delta)^j e_k=0,
\]
for all $j,k\in\{0,1,\ldots,p-1\}$. Applying $\pi_{x,\lambda}(\delta)^{-j}$ to this relation, we find the equivalent requirement
\[
\suml \lambda^l \pi_{x,\lambda}(a_{lp+j}\circ \homeo^j) e_k=0,
\]
for all $j,k\in\{0,1,\ldots,p-1\}$. Making the diagonal action of $\coeffalg$ explicit this translates into
\[
\suml \lambda^l a_{lp+j}(\homeo^{j+k}x)=0,
\]
for all $j,k\in\{0,1,\ldots,p-1\}$, which is the vanishing property on $\orbit{x}$ as stated in the second part of the Proposition.

Turning to the third part, let $p$ be the period of $x$. If $a_n\rest{\orbit{x}}=0$ for all $n\in\Z$, then part (2) implies that $a\in\piper{x}{\lambda}$ for all $\lambda\in\T$, hence $a\in\piperintersection{x}=\bigcap_{\lambda\in\T}\piper{x}{\lambda}$. Conversely, if $a\in\piper{x}{\lambda}$ for all $\lambda\in\T$, then \eqref{e:vanishing} holds for all $j\in\{0,1,\ldots,p-1\}$, all $x^\prime\in\orbit{x}$ and all $\lambda\in\T$. For fixed $j$ and $x^\prime$, the validity of \eqref{e:vanishing} for all $\lambda\in\T$ simply means that the map $l\mapsto a_{lp+j}(x^\prime)$, which is in $\loneZ$, has zero Fourier transform. Hence $a_{lp+j}(x^\prime)=0$ for all $l\in\Z$, $j\in\{0,1,\ldots,p-1\}$ and $x^\prime\in\orbit{x}$, which is an alternative way of expressing that $a_n\rest{\orbit{x}}=0$ for all $n\in\Z$.
\end{proof}

For the definition of $\piperintersection{x}$, countable intersection will do, as is implied by the next result.

\begin{corollary}\label{c:dense_subset_intersection} Let $x\in\perpoints$, and let $\T_x$ be a dense subset of $\T$. Then
\[
\piperintersection{x}=\bigcap_{\lambda\in\T_x}\piper{x}{\lambda}
\]
\end{corollary}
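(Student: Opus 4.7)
The plan is to prove the non-trivial inclusion $\bigcap_{\lambda\in\T_x}\piper{x}{\lambda}\subset\piperintersection{x}$ by a continuity argument, exploiting the fact that the relevant finite sums from Proposition~\ref{p:belonging_to_kernel}(2) depend continuously on $\lambda\in\T$.

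First I would note that the inclusion $\piperintersection{x}\subset\bigcap_{\lambda\in\T_x}\piper{x}{\lambda}$ is immediate from the definition $\piperintersection{x}=\bigcap_{\lambda\in\T}\piper{x}{\lambda}$ and the containment $\T_x\subset\T$. For the reverse inclusion, fix $a=\loneelement{n}\in\bigcap_{\lambda\in\T_x}\piper{x}{\lambda}$ and let $p$ be the period of $x$. By Proposition~\ref{p:belonging_to_kernel}(2), for every $\lambda\in\T_x$, every $j\in\{0,1,\ldots,p-1\}$, and every $x^\prime\in\orbit{x}$,
\[
\suml \lambda^l a_{lp+j}(x^\prime)=0.
\]

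Next I would observe that for fixed $j$ and $x^\prime$, the sequence $(a_{lp+j}(x^\prime))_{l\in\Z}$ is in $\loneZ$, since $\sum_l|a_{lp+j}(x^\prime)|\leq\sum_l\Vert a_{lp+j}\Vert\leq\Vert a\Vert<\infty$. Hence the map $\lambda\mapsto\suml\lambda^l a_{lp+j}(x^\prime)$ is the Fourier transform of an element of $\loneZ$ and is therefore continuous on $\T$. Since this continuous function vanishes on the dense subset $\T_x$, it vanishes on all of $\T$.

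Finally, applying Proposition~\ref{p:belonging_to_kernel}(2) in the reverse direction, the vanishing of these sums for all $\lambda\in\T$, all $j\in\{0,1,\ldots,p-1\}$, and all $x^\prime\in\orbit{x}$ shows that $a\in\piper{x}{\lambda}$ for every $\lambda\in\T$, so $a\in\piperintersection{x}$. (Equivalently, one could invoke the injectivity of the Fourier transform on $\loneZ$ to conclude that $a_{lp+j}(x^\prime)=0$ for all $l,j,x^\prime$, hence $a_n\rest{\orbit{x}}=0$ for all $n\in\Z$, and then appeal to Proposition~\ref{p:belonging_to_kernel}(3).) There is no genuine obstacle here; the only point worth highlighting is the $\ell^1$ summability that guarantees continuity of the Fourier transform, which is what permits the passage from the dense subset $\T_x$ to all of $\T$.
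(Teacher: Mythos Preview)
Your proof is correct and follows essentially the same route as the paper's: both observe that the map $\lambda\mapsto\suml\lambda^l a_{lp+j}(x')$ is a continuous (Fourier-transform) function that vanishes on the dense set $\T_x$, hence on all of $\T$, and then conclude via Proposition~\ref{p:belonging_to_kernel}. You simply spell out the $\ell^1$ summability and continuity more explicitly than the paper does.
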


\begin{proof}
Certainly $\piperintersection{x}\subset\cap_{\lambda\in\T_x}\piper{x}{\lambda}$. For the reverse inclusion, if $a=\loneelement{n}\in\piper{x}{\lambda}$ for all $\lambda\in\T_x$, then the Fourier transform occurring in the conclusion of the proof of the third part of Proposition~\ref{p:belonging_to_kernel} is zero on $\T_x$, hence on $\T$, and it follows as before that $a_n\rest{\orbit{x}}=0$, for all $n\in\Z$. Hence $a\in\piperintersection{x}$.
\end{proof}

We will now collect a number of further consequences of Proposition~\ref{p:belonging_to_kernel}, and we start with the existence of well behaved, badly behaved and (possibly) plain self-adjoint closed ideals in $\lone$, as announced in the discussion following Lemma~\ref{l:behaved_lemma}. Note that for $\loneZ$ only the second and third part of Proposition~\ref{p:behaviour_of_families} are non-vacuous.

\begin{proposition}\label{p:behaviour_of_families}\quad
\begin{enumerate}
\item If $x\in\aperpoints$, then $\piaper{x}=\Laurentseries{\kernel(\orbitclosure{x})}$ is a well behaved proper \ulb self-adjoint\urb\ closed ideal.
\item If $x\in\perpoints$ and $\lambda\in\T$, then $\piper{x}{\lambda}$ is a badly behaved proper self-adjoint closed ideal.
\item If $x\in\perpoints$, then:
\begin{enumerate}
\item $\piperintersection{x}=\Laurentseries{\kernel(\orbit{x})}$ is a well behaved proper \ulb self-adjoint\urb\ closed ideal;
\item If $\lambda\in\T$, then $\piperintersection{x}$ is the largest well behaved ideal contained in $\piper{x}{\lambda}$.
\end{enumerate}
\item If $x_1\in\aperpoints$ and $x_2\in\perpoints\cap(\topspace\setminus\orbitclosure{x_1})$, then $\piaper{x_1}\cap\piper{x_2}{\lambda}$ is a plain (hence proper) self-adjoint closed ideal, for all $\lambda\in\T$.
\end{enumerate}
\end{proposition}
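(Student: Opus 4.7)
The plan is to dispose of parts (1), (3)(a), and (2) by direct application of Proposition~\ref{p:belonging_to_kernel} together with the structural results already at hand, and then to handle parts (3)(b) and (4) by more specific arguments; the main obstacle is (4).

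For part (1), Proposition~\ref{p:belonging_to_kernel}(1) immediately yields $\piaper{x}=\Laurentseries{\kernel(\orbitclosure{x})}$; since $\orbitclosure{x}$ is $\homeo$-invariant, $\kernel(\orbitclosure{x})$ is an $\alpha$-invariant closed ideal of $\coeffalg$, and Lemma~\ref{l:Laurent_properties}(4) together with Corollary~\ref{c:characterisation_well_behaved_closed_ideals} then supplies self-adjointness, closedness, and well-behavedness; properness follows from $\pi_{x}\neq 0$. Part (3)(a) proceeds identically, since for $x\in\perpoints$ the orbit $\orbit{x}$ is finite and hence closed, and Proposition~\ref{p:belonging_to_kernel}(3) gives $\piperintersection{x}=\Laurentseries{\kernel(\orbit{x})}$. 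For part (2), self-adjointness, closedness and properness are immediate from $\piper{x}{\lambda}$ being the kernel of a non-zero involutive representation, and to exhibit the bad behaviour I would, for arbitrary $f\in\coeffalg$, take $a=f\delta^0-\lambda^{-1}f\delta^p$ and verify $a\in\piper{x}{\lambda}$ via Proposition~\ref{p:belonging_to_kernel}(2): for $j=0$ the sum in \eqref{e:vanishing} collapses to $f(x^\prime)-f(x^\prime)=0$, and for $j\geq 1$ all coefficients vanish identically. Since $\Eone(a)=f$, this yields $\Eone(\piper{x}{\lambda})=\coeffalg$.

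For part (3)(b), the inclusion $\piperintersection{x}\subset\piper{x}{\lambda}$ is immediate, and well-behavedness was shown in (3)(a). For maximality, given any well behaved ideal $J\subset\piper{x}{\lambda}$, I would use Lemma~\ref{l:well_behaved_ideals}(1) and Lemma~\ref{l:basic_E_properties}(5)(b) to confine every coefficient of every element of $J$ to $J\cap\coeffalg\subset\piper{x}{\lambda}\cap\coeffalg$. A one-line direct application of \eqref{e:vanishing} to the element $f\delta^0$ identifies $\piper{x}{\lambda}\cap\coeffalg=\kernel(\orbit{x})$; consequently every $a\in J$ has all coefficients in $\kernel(\orbit{x})$, which places $a$ in $\Laurentseries{\kernel(\orbit{x})}=\piperintersection{x}$.

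Part (4), the main obstacle, asks for an element $a\in I:=\piaper{x_1}\cap\piper{x_2}{\lambda}$ with $\Eone(a)\notin I$. The hypothesis $x_2\notin\orbitclosure{x_1}$, combined with $\homeo$-invariance of the orbit closure, forces $\orbit{x_2}\cap\orbitclosure{x_1}=\emptyset$; periodicity of $x_2$ makes $\orbit{x_2}$ finite and closed, so the two sets are disjoint closed subsets of $\topspace$. Tietze then supplies $f,g\in\kernel(\orbitclosure{x_1})$ with $f(x_2)\neq 0$ and $g\rest{\orbit{x_2}}=-\lambda^{-1}f\rest{\orbit{x_2}}$. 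Taking $p$ the period of $x_2$ and $a=f\delta^0+g\delta^p$, a repeat of the vanishing computation from part (2) shows $a\in\piper{x_2}{\lambda}$, while $f,g\in\kernel(\orbitclosure{x_1})$ places $a\in\piaper{x_1}$. Since $\Eone(a)=f$ does not vanish on $\orbit{x_2}$, whereas the identification $\piper{x_2}{\lambda}\cap\coeffalg=\kernel(\orbit{x_2})$ from (3)(b) forces $I\cap\coeffalg\subset\kernel(\orbit{x_2})$, the ideal $I$ is not well behaved; it is not badly behaved either, because $\Eone(I)\subset\Eone(\piaper{x_1})=\kernel(\orbitclosure{x_1})\neq\coeffalg$.
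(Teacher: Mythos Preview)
Your proof is correct and follows essentially the same route as the paper. The only notable difference is in part~(4): the paper observes that one may simply take $g=-\lambda^{-1}f$, so that $a=f-(f/\lambda)\delta^p$ with a single Urysohn function $f\in\kernel(\orbitclosure{x_1})$ satisfying $f(x_2)\neq 0$; your separate Tietze construction of $g$ is correct but unnecessary. Likewise, in part~(2) the paper avoids the coefficient check in \eqref{e:vanishing} by invoking $\pi_{x,\lambda}(\delta^p)=\lambda\cdot\idmap$ directly, but this is the same computation.
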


\begin{proof}
Parts (1) and (3)(a) are immediate from the first and third part of Proposition~\ref{p:belonging_to_kernel}, respectively. For part (3)(b), if $a=\loneelement{n}\in I$, where  $I$ is a well behaved ideal contained in $\piper{x}{\lambda}$, then $a_n\delta^n\in I\subset\piper{x}{\lambda}$, for all $n\in\Z$. Since \eqref{e:vanishing} then shows that $a_n\rest{\orbit{x}}=0$, for all $n\in\Z$, we see that $a\in\piperintersection{x}$. Hence $I\subset\piperintersection{x}$.

As to the second part, suppose $x\in\perpnew$. Now note that, for arbitrary $f\in\coeffalg$, $a:=f-(f/\lambda)\delta^p$ is in $\piper{x}{\lambda}$, since $\pi_{x,\lambda}(\delta^p)=\lambda\cdot\idmap$. Hence $f=\Eone(a)\in\Eone(\piper{x}{\lambda})$.

Turning to the fourth part, note that $\Eone( \piaper{x_1}\cap\piper{x_2}{\lambda})\subset \Eone(\piaper{x_1})=\{f\in\coeffalg : f\rest{\orbitclosure{x_1}}=0\}\neq\coeffalg$. Next, choose $f\in\coeffalg$ such that $f\rest{\orbitclosure{x_1}}=0$ and $f(x_2)\neq 0$. If $p$ is the period of $x_2$, let $a:=f-(f/\lambda)\delta^p$. Since $\pi_{x_1}(f)=0$ and $\pi_{x_2,\lambda}(\delta^p)=\lambda\cdot\idmap$, $a$ is in $\piaper{x_1}\cap\piper{x_2}{\lambda}$. However, by the second part of Proposition~\ref{p:belonging_to_kernel}, $\Eone(a)=f$ is not in $\piper{x_2}{\lambda}$ since $f(x_2)\neq 0$, hence it is certainly not in $\piaper{x_1}\cap\piper{x_2}{\lambda}$. Hence $\Eone(\piaper{x_1}\cap\piper{x_2}{\lambda})\not\subset \piaper{x_1}\cap\piper{x_2}{\lambda}$, and we see that $\piaper{x_1}\cap\piper{x_2}{\lambda}$ is plain.
\end{proof}

Another consequence of Proposition~\ref{p:belonging_to_kernel} is the following.

\begin{proposition}\label{p:separating_representations}
The family $\{\pi_x : x\in\aperpoints\}\,\cup\,\{\pi_{x,\lambda} : x\in\perpoints,\, \lambda\in\T\}$ of involutive representations of $\lone$ separates the elements of $\lone$.
\end{proposition}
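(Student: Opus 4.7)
The plan is to show that $\bigcap_x \piaper{x} \cap \bigcap_{x,\lambda}\piper{x}{\lambda} = \{0\}$, where the first intersection runs over $\aperpoints$ and the second over $x \in \perpoints$ and $\lambda \in \T$. Since separation of elements by a family of representations is equivalent to the intersection of their kernels being trivial, this is what is required.

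I would start by taking an arbitrary $a = \loneelement{n} \in \lone$ that lies in every kernel in the family, and show each coefficient $a_n \in \coeffalg$ vanishes identically on $\topspace$. For any $x \in \aperpoints$, the hypothesis $a \in \piaper{x}$ and part (1) of Proposition~\ref{p:belonging_to_kernel} give $a_n\rest{\orbitclosure{x}} = 0$ for every $n \in \Z$; in particular $a_n(x) = 0$. For any $x \in \perpoints$, the hypothesis $a \in \piper{x}{\lambda}$ for every $\lambda \in \T$ means $a \in \piperintersection{x} = \bigcap_{\lambda \in \T}\piper{x}{\lambda}$ by definition, so part (3) of Proposition~\ref{p:belonging_to_kernel} yields $a_n\rest{\orbit{x}} = 0$ for every $n \in \Z$; in particular $a_n(x) = 0$.

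Because $\topspace = \aperpoints \cup \perpoints$, the two cases together give $a_n(x) = 0$ for every $x \in \topspace$ and every $n \in \Z$. Since each $a_n$ lies in $\coeffalg$, this forces $a_n = 0$ for all $n$, hence $a = 0$. This yields the separation statement.

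There is no real obstacle here; the result is a direct distillation of the vanishing criteria already established in Proposition~\ref{p:belonging_to_kernel}, using only the trivial partition $\topspace = \aperpoints \cup \perpoints$ and the fact that continuous functions agreeing with $0$ pointwise on all of $\topspace$ are the zero function. The one point to flag is that for periodic $x$ one really does need to involve \emph{all} $\lambda \in \T$ (the individual ideals $\piper{x}{\lambda}$ are badly behaved and cannot on their own control the coefficients of $a$); this is exactly what ensures that the passage to $\piperintersection{x}$, and hence the coefficient-wise vanishing on $\orbit{x}$, is available.
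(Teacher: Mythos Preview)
Your argument is correct and matches the paper's primary proof: the paper simply cites the first and third parts of Proposition~\ref{p:behaviour_of_families}, which encode exactly the coefficient-wise vanishing you extract from Proposition~\ref{p:belonging_to_kernel}. The paper also records a second, more conceptual alternative via pure state extensions and the Krein--Milman theorem, but your direct route is the one it leads with.
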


\begin{proof}
This is clear from the first and third part of Proposition~\ref{p:behaviour_of_families}. Alternatively, we can remark that it must be the case, as it is even true for the superalgebra $\cstar$ of $\lone$, as a special case of \cite[Proposition~2]{T2}. The conceptual proof as given in \cite{T2} translates to a slightly easier one for $\lone$, as follows. If $a=\loneelement{n}\in\lone$ is in the kernel of all $\pi_x$ $(x\in\aperpoints)$ and $\pi_{x,\lambda}$ $(x\in\perpoints,\lambda\in\T)$, then certainly the states on $\lone$ used to define these representations vanish at $a^\ast a$. Since these states were taken to constitute \emph{all} pure state extensions of all states $\ev_x$ on $\coeffalg$, for $x\in\topspace$, the Krein-Milman theorem implies that all state extensions of all states $\ev_x$ $(x\in\topspace)$ vanish at $a^\ast a$. Now observe that, for all $x\in\topspace$, the map $a\mapsto a_0(x)$ is a state on $\lone$ extending $\ev_x$. The positivity follows from the fact that $(a^\ast a)_0=\sum_{n}|a_n\circ\homeo^n|^2$, and this also makes clear that, if $a^\ast a$ is in the simultaneous kernel of these states, then $a=0$.
\end{proof}

An argument similar to that in the proof of Proposition~\ref{p:separating_representations} shows that it is a priori clear that, if $a=\loneelement{n}$ is in $\piaper{x}$ or $\piperintersection{x}$, then $a_n(\homeo^n x)=0$ $(n\in\Z)$. Applying this to $a\delta^k$ $(k\in\Z)$, which is in the same ideal, we see that $\piaper{x}\subset\Laurentseries{\kernel(\orbitclosure{x})}$ $(x\in\aperpoints)$ and that $\piperintersection{x}\subset\Laurentseries{\kernel(\orbit{x})}$ $(x\in\perpoints)$. Since the reverse inclusion is clear from the description of the pertinent representations, one obtains an alternative proof of the first and third part of Proposition~\ref{p:behaviour_of_families}.

The following  is obvious from the first and third part of Proposition~\ref{p:behaviour_of_families}.

\begin{corollary}\label{c:orbit_closure_injectivemap}
Let $\orbitclosures$ be the set of orbit closures $\{\orbitclosure{x} : x\in X\}$. For $\orbitclosure{x}\in\orbitclosures$, let
\begin{equation*}
I(\orbitclosure{x})=
\begin{cases}
\piaper{x}\textup{ if }x\in\aperpoints ;\\
\piperintersection{x}\textup{ if }x\in\perpoints.
 \end{cases}
\end{equation*}
Then $I$ is a well defined inclusion reversing bijection between $\orbitclosures$ and the set $\{\piaper{x} : x\in \aperpoints\}\,\cup\,\{\piperintersection{x} : x \in\perpoints\}$, which consists of well behaved \ulb self-adjoint\urb\ closed ideals of $\lone$. It is given explicitly as $I(\orbitclosure{x})=\Laurentseries{\kernel(\orbitclosure{x})}$.
\end{corollary}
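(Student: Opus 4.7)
The plan is to reduce the statement to the explicit formula $I(\orbitclosure{x})=\Laurentseries{\kernel(\orbitclosure{x})}$, since once this formula is in hand, everything else follows from familiar bijections.

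First I would verify the formula by cases. If $x\in\aperpoints$, then part (1) of Proposition~\ref{p:behaviour_of_families} directly gives $\piaper{x}=\Laurentseries{\kernel(\orbitclosure{x})}$. If $x\in\perpoints$, then part (3)(a) gives $\piperintersection{x}=\Laurentseries{\kernel(\orbit{x})}$; but the orbit of a periodic point is finite, hence already closed, so $\orbit{x}=\orbitclosure{x}$ and the formula $I(\orbitclosure{x})=\Laurentseries{\kernel(\orbitclosure{x})}$ holds in this case too.

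Next I would check that $I$ is well defined, which requires showing that the assignment does not depend on the choice of representative $x$ of the orbit closure, and in particular does not depend on which of the two clauses one applies. If $\orbitclosure{x}=\orbitclosure{y}$ and $x\in\perpoints$, then $\orbitclosure{x}=\orbit{x}$ is finite, forcing $y\in\orbit{x}$, so $y$ is periodic with $\orbit{y}=\orbit{x}$; thus the clauses never conflict. In any case the right-hand side $\Laurentseries{\kernel(\orbitclosure{x})}$ depends only on $\orbitclosure{x}$, so $I$ is well defined.

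For the bijection, surjectivity is immediate because the codomain is defined as the image of $I$. For injectivity, if $\Laurentseries{\kernel(\orbitclosure{x})}=\Laurentseries{\kernel(\orbitclosure{y})}$, then applying the projection $\Eone$ (or intersecting with the copy of $\coeffalg$ inside $\lone$) recovers $\kernel(\orbitclosure{x})=\kernel(\orbitclosure{y})$, and then $\hull\kernel=\idmap$ on closed subsets of $\topspace$ yields $\orbitclosure{x}=\orbitclosure{y}$. The inclusion-reversing property is then transparent: $\orbitclosure{x}\subset\orbitclosure{y}$ iff $\kernel(\orbitclosure{x})\supset\kernel(\orbitclosure{y})$ iff $\Laurentseries{\kernel(\orbitclosure{x})}\supset\Laurentseries{\kernel(\orbitclosure{y})}$, the last equivalence by the monotonicity of $\Laurentseries{\cdot}$ together with the injectivity just noted. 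Finally, since orbit closures are $\homeo$-invariant, $\kernel(\orbitclosure{x})$ is an $\alpha$-invariant closed ideal of $\coeffalg$, and so Corollary~\ref{c:characterisation_well_behaved_closed_ideals} identifies $I(\orbitclosure{x})$ as a well behaved, automatically self-adjoint, closed ideal of $\lone$.

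There is no real obstacle here; the only point that deserves a moment's care is the harmonisation of the two defining clauses on the overlap, and this is handled by the observation that aperiodicity and periodicity cannot coexist within a single orbit closure.
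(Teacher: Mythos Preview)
Your argument is correct and follows the same route as the paper, which simply declares the result ``obvious from the first and third part of Proposition~\ref{p:behaviour_of_families}''; you have merely unpacked that obviousness. One small wording caution: your closing sentence, that ``aperiodicity and periodicity cannot coexist within a single orbit closure,'' is literally false (an aperiodic orbit can accumulate on a periodic one), and what you actually need---and correctly argued earlier---is that two points with the \emph{same} orbit closure are either both periodic or both aperiodic.
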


We conclude this section with complete information on the properness, triviality and all possible inclusions within and between these three families in our next result, followed by two of its consequences.

\begin{proposition}\label{p:properties_of_families}\quad
\begin{enumerate}
\item
\begin{enumerate}
\item The ideals $\piaper{x}$ $(x\in\aperpoints)$, $\piper{x}{\lambda}$ $(x\in\perpoints, \lambda\in\T)$, and $\piperintersection{x}$ $(x\in\perpoints)$ are proper self-adjoint closed ideals of $\lone$.
\item
\begin{enumerate}
\item For $x\in\aperpoints$, $\piaper{x} =\{0\}$ if and only if $\orbitclosure{x}=\topspace$.
    \item For $x\in\perpoints$ and $\lambda\in\T$, $\piper{x}{\lambda}\neq \{0\}$.
    \item For $x\in\perpoints$, $\piperintersection{x}=\{0\}$ if and only if $\orbit{x}=\topspace$.
\end{enumerate}
\end{enumerate}

\item The three sets $\{\piaper{x} : x\in\aperpoints\}$, $\{\piper{x}{\lambda} : x\in\perpoints,\,\lambda\in\T\}$ and $\{\piperintersection{x} : x\in\perpoints\}$ of proper self-adjoint closed ideals of $\lone$ are pairwise disjoint.
\item
\begin{enumerate}
\item For $x_1, x_2\in\aperpoints$, $\piaper{x_1}\subset\piaper{x_2}$ if and only if $\orbitclosure{x_1}\supset\orbitclosure{x_2}$.
\item For $x_1, x_2\in\perpoints$ and $\lambda_1,\lambda_2\in\T$, the following are equivalent;
    \begin{enumerate}
    \item $\piper{x_1}{\lambda_1}\subset\piper{x_2}{\lambda_2}$;
    \item $\orbit{x_1}=\orbit{x_2}$ and $\lambda_1=\lambda_2$;
    \item
            $\piper{x_1}{\lambda_1}=\piper{x_2}{\lambda_2}$.
    \end{enumerate}
\item For $x_1, x_2\in\perpoints$, the following are equivalent:
    \begin{enumerate}
    \item $\piperintersection{x_1}\subset\piperintersection{x_2}$;
    \item $\orbit{x_1}=\orbit{x_2}$;
    \item $\piperintersection{x_1}=\piperintersection{x_2}$.
    \end{enumerate}
\end{enumerate}
\item
\begin{enumerate}
\item Let $x_1\in\aperpoints$, $x_2\in\perpoints$, and $\lambda\in\T$. Then:
    \begin{enumerate}
    \item $\piaper{x_1}\not\supset\piper{x_2}{\lambda}$;
    \item $\piaper{x_1}\subset\piper{x_2}{\lambda}$ if and only if $\orbitclosure{x_1}\supset\orbit{x_2}$.
    \end{enumerate}
\item Let $x_1\in\aperpoints$ and $x_2\in\perpoints$. Then:
    \begin{enumerate}
    \item $\piaper{x_1}\not\supset\piperintersection{x_2}$;
    \item $\piaper{x_1}\subset\piperintersection{x_2}$ if and only if $\orbitclosure{x_1}\supset\orbit{x_2}$.
    \end{enumerate}
\item Let $x_1, x_2\in\perpoints$ and $\lambda\in\T$. Then:
\begin{enumerate}
\item $\piperintersection{x_1}\not\supset\piper{x_2}{\lambda}$;
\item $\piperintersection{x_1}\subset \piper{x_2}{\lambda}$ if and only if $\orbit{x_1}=\orbit{x_2}$.
\end{enumerate}
\end{enumerate}

\end{enumerate}
\end{proposition}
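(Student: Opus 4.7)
The proof plan rests on four tools already established: (T1) the coefficientwise descriptions of belonging to each ideal from Proposition~\ref{p:belonging_to_kernel}; (T2) the identities $\piaper{x}=\Laurentseries{\kernel(\orbitclosure{x})}$ and $\piperintersection{x}=\Laurentseries{\kernel(\orbit{x})}$, together with the fact that $\piperintersection{x}$ is the largest well behaved ideal inside $\piper{x}{\lambda}$, both from Proposition~\ref{p:behaviour_of_families}; (T3) the dichotomy of Lemma~\ref{l:behaved_lemma}(4), that a proper closed ideal of $\lone$ is either well behaved or badly behaved, but not both; and (T4) the inclusion-reversing bijection between closed subsets of $\topspace$ and closed ideals of $\coeffalg$, restricted to the $\homeo$-invariant subsets and $\alpha$-invariant ideals. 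Part~(1) is then direct: self-adjointness of primitive ideals and unitality of the defining representations on nontrivial Hilbert spaces yield (1)(a) and the properness in (1)(b); items (b)(i) and (b)(iii) reduce via (T2) to $\Laurentseries{L}=\{0\}\Leftrightarrow L=\{0\}$, using $\overline{\orbit{x}}=\orbit{x}$ when $x$ is periodic, while (b)(ii) is witnessed by the nonzero element $1-\lambda^{-1}\delta^p\in\piper{x}{\lambda}$, whose membership follows from $\pi_{x,\lambda}(\delta^p)=\lambda\cdot\idmap$.

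For Part~(2), ideals in the middle family are badly behaved while those in the first and third are well behaved (Proposition~\ref{p:behaviour_of_families}), so (T3) forces pairwise disjointness of the middle family from the other two; disjointness of the first from the third follows from (T2), since equality $\piaper{x_1}=\piperintersection{x_2}$ would force $\orbitclosure{x_1}=\overline{\orbit{x_2}}=\orbit{x_2}$, putting the aperiodic point $x_1$ inside a finite set of periodic points. For Part~(3), the within-family cases (a) and (c) reduce via (T2) and (T4) to inclusion-reversal of $\kernel$: in (a) one gets $\orbitclosure{x_1}\supset\orbitclosure{x_2}$ directly, and in (c) one gets $\orbit{x_1}\supset\orbit{x_2}$, which is necessarily an equality since distinct periodic orbits are disjoint.

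The central case (3)(b) contains the main obstacle: (iii)$\Rightarrow$(i) is trivial, (ii)$\Rightarrow$(iii) follows since the vanishing conditions \eqref{e:vanishing} depend only on $\orbit{x}$ and $\lambda$, and (i)$\Rightarrow$(ii) proceeds in two stages. First, because $\piperintersection{x_1}$ is well behaved and sits inside $\piper{x_1}{\lambda_1}\subset\piper{x_2}{\lambda_2}$, the maximality of $\piperintersection{x_2}$ among well behaved subideals of $\piper{x_2}{\lambda_2}$ yields $\piperintersection{x_1}\subset\piperintersection{x_2}$, so $\orbit{x_1}=\orbit{x_2}$ by (3)(c) and hence $p:=p_1=p_2$. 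Second, the element $1-\lambda_1^{-1}\delta^p$ lies in $\piper{x_1}{\lambda_1}\subset\piper{x_2}{\lambda_2}$, so $\pi_{x_2,\lambda_2}(1-\lambda_1^{-1}\delta^p)=\idmap-\lambda_1^{-1}\lambda_2\cdot\idmap=0$, forcing $\lambda_1=\lambda_2$.

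For Part~(4), the non-containment statements (a)(i) and (c)(i) follow from (T3): a badly behaved ideal contained in a well behaved ideal would make the latter both well and badly behaved, hence improper by Lemma~\ref{l:behaved_lemma}(4), contradicting Part~(1). Statement (b)(i) follows from (T2), since containment would force $\orbitclosure{x_1}\subset\orbit{x_2}$, a finite set, contradicting aperiodicity of $x_1$. Each positive characterisation (a)(ii), (b)(ii), (c)(ii) reduces to inclusion-reversal of $\kernel$ after, where appropriate, pushing the well behaved ideal on the left into $\piperintersection{x_2}$, the largest well behaved subideal of the ideal on the right, and then invoking (3)(a) or (3)(c). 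Throughout, the principal technical hurdle remains the extraction of both the orbit and the spectral parameter $\lambda$ from a single ideal inclusion in (3)(b); this is resolved by pairing the ``largest well behaved subideal'' mechanism with the $\lambda$-sensitive test element $1-\lambda^{-1}\delta^p$.
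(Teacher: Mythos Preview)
Your proof is correct and follows the same overall architecture as the paper's, relying on Proposition~\ref{p:belonging_to_kernel}, Proposition~\ref{p:behaviour_of_families}, and Lemma~\ref{l:behaved_lemma}. There are, however, a few local differences worth noting. For (1)(b)(ii) and (4)(a)(i) the paper argues via dimensions (a finite-dimensional representation of an infinite-dimensional algebra has nonzero kernel; an infinite-dimensional irreducible representation cannot vanish on an ideal of finite codimension), whereas you use the explicit witness $1-\lambda^{-1}\delta^p$ and the well/badly behaved dichotomy of Lemma~\ref{l:behaved_lemma}, respectively. More interestingly, in (3)(b) the paper establishes $\orbit{x_1}=\orbit{x_2}$ by directly constructing a separating function $f\in\coeffalg$ vanishing on one orbit but not the other, while you route the argument through the ``largest well behaved subideal'' statement of Proposition~\ref{p:behaviour_of_families}(3)(b) to obtain $\piperintersection{x_1}\subset\piperintersection{x_2}$ and then invoke (3)(c). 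You reuse this same device systematically in (4)(a)(ii) and (4)(c)(ii), where the paper instead intersects with $\coeffalg$ and argues directly with kernels. Your approach is slightly more uniform and avoids ad hoc constructions; the paper's is marginally more self-contained at each step. Both are equally valid.
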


\begin{proof}
Part (1)(a) is obvious. Part (1)(b)(i) and (1)(b)(iii) follow from Proposition~\ref{p:behaviour_of_families}; part (1)(b)(ii) is clear since $\pi_{x,\lambda}$ is a finite dimensional representation of the infinite dimensional algebra $\lone$.

As to part (2), the first and third part of Proposition~\ref{p:belonging_to_kernel} imply that there is no overlap between the two families $\{\piaper{x} : x\in\aperpoints\}$ and $\{\piperintersection{x} : x\in\perpoints\}$. That the remaining two intersections are empty follows from Proposition~\ref{p:behaviour_of_families}: since the ideals $\piaper{x}$ $(x\in\aperpoints)$ and $\piperintersection{x}$ $(x\in\perpoints$ are well behaved proper closed ideals, and the ideals $\piper{x}{\lambda}$ $(x\in\perpoints,\lambda\in\T)$ are badly behaved proper closed ideals, there can be no overlap in view of part (4) of Lemma~\ref{l:behaved_lemma}.

Turning to part (3), part (3)(a) and (3)(c) are obvious from Proposition~\ref{p:belonging_to_kernel}. As to (3)(b), suppose that $\piper{x_1}{\lambda_1}\subset\piper{x_2}{\lambda_2}$. If $\orbit{x_1}\neq\orbit{x_2}$, then there exists $f\in\coeffalg$ such that $f\rest{\orbit{x_1}}=0$ and $f\rest{\orbit{x_2}}\neq 0$. The description of $\pi_{x_1,\lambda_1}$ and $\pi_{x_2,\lambda_2}$ then makes it clear that $f\in\piper{x_1}{\lambda_1}$, but $f\notin\piper{x_2}{\lambda_2}$. Hence the orbits must coincide, and we let $p$ be the period of $x_1$ and $x_2$. Since $1-(1/\lambda_1)\delta^p\in\piper{x_1}{\lambda_1}$, it is in $\piper{x_2}{\lambda_2}$ and applying $\pi_{x_2,\lambda_2}$ yields $1-(\lambda_2/\lambda_1)=0$, hence $\lambda_2=\lambda_1$. Therefore (3)(b)(i) implies (3)(b)(ii). It is clear from the second part of Proposition~\ref{p:belonging_to_kernel} that (3)(b)(ii) implies (3)(b)(iii), and the remaining implication in (3)(b) is trivial.

For part (4)(a)(i), if $\piaper{x_1}\supset\piper{x_2}{\lambda}$, then the kernel of the infinite dimensional irreducible representation $\pi_{x_1}$ would contain an ideal of the algebra of finite codimension in the algebra, which is impossible. As to (4)(a)(ii), assume that $\piaper{x_1}\subset\piper{x_2}{\lambda}$. Since the first part of Proposition~\ref{p:belonging_to_kernel} shows that $\kernel(\orbitclosure{x_1})\subset\piaper{x_1}$, we have $\kernel(\orbitclosure{x_1})\subset\piper{x_2}{\lambda}$. An application of the second part of Proposition~\ref{p:behaviour_of_families} with $j=0$, or an appeal to the description of $\pi_{x_2,\lambda}$, then implies that $\kernel(\orbitclosure{x_1})\subset\kernel(\orbit{x_2})$. Hence $\orbitclosure{x_1}\supset\orbit{x_2}$. The converse implication in (4)(a)(ii) is immediate from the first and second part of Proposition~\ref{p:belonging_to_kernel}.

Part (4)(b) is immediate from the first and third part of Proposition~\ref{p:belonging_to_kernel}.

For (4)(c)(i), if $\piperintersection{x_1}\supset\piper{x_2}{\lambda}$, then, using that part (2) of Proposition~\ref{p:behaviour_of_families} shows that $\piper{x}{\lambda}$ is badly behaved, the second part of Lemma~\ref{l:behaved_lemma} implies that $\piperintersection{x_1}$ is likewise badly behaved. This yields a contradiction between part (4) of Lemma~\ref{l:behaved_lemma} and part (3)(a) of Proposition~\ref{p:behaviour_of_families}. The proof of (4)(c)(ii)  is similar to that of (4)(a)(ii).
\end{proof}

Naturally, part (2) of Proposition~\ref{p:properties_of_families} also follows from part (4).

\begin{remark}\label{r:not_type_I_remark}\quad
\begin{enumerate}
\item It follows from \cite[Corollary~4.1.4]{tomiyama_book} that, among the set $\{\pi_{x} : x\in\aperpoints\}\,\cup\,\{\pi_{x,\lambda} : x\in\perpoints,\lambda\in\T\}$ of irreducible involutive representations of $\lone$, unitary equivalence occurs precisely between $\pi_{x_1}$ and $\pi_{x_1}$ for $x_1,x_2\in\aperpoints$ in the same orbit, and between $\pi_{x_1,\lambda}$ and $\pi_{x_2,\lambda}$ for $x_1,x_2\in\perpoints$ in the same orbit. Certainly the corresponding primitive ideals are then equal, but the converse is not true in general. As Proposition~\ref{p:properties_of_families} shows, if $x_0\in\perpoints,\,\lambda_0\in\T$, it is still true that the only representations in $\{\pi_{x} : x\in\aperpoints\} \,\cup\,\{\pi_{x,\lambda} : x\in\perpoints,\lambda\in\T\}$ with $\piper{x_0}{\lambda_0}$ as primitive ideal are the $\pi_{x^\prime,\lambda_0}$ with $x^\prime\in\orbit{x_0}$, i.e., precisely the involutive representations unitarily equivalent to $\pi_{x_0,\lambda_0}$. For $x_0\in\aperpoints$ this need not hold: the representations in $\{\pi_{x} : x\in\aperpoints\} \,\cup\,\{\pi_{x,\lambda} : x\in\perpoints, \lambda\in\T\}$ with $\piaper{x_0}$ as primitive ideal are precisely the $\pi_{x^\prime}$ with $x^\prime\in\aperpoints$ such that $\orbitclosure{x^\prime}=\orbitclosure{x_0}$, and it is possible that the set of such $x^\prime$ (the quasi-orbit of $x$) is strictly larger than $\orbit{x_0}$.
\item If $\topspace$ is metrizable, then \cite[Theorem~7.7]{tomiyama_notes_two} gives a number of equivalent conditions for the property that each irreducible representations of $\cstar$ is uniquely determined, up to unitary equivalence, by its primitive ideal. One of these is that the Birkhoff center $c(\homeo)$ of $\dynsysshort$ coincides with $\perpoints$, and another is that all irreducible representations of $\cstar$ are unitarily equivalent with the representations arising from pure state extensions of point evaluations as above (see \cite[Proposition~7.5]{tomiyama_notes_two} for an explicit counterexample if $c(\homeo)\supsetneqq\perpoints$).

It is tempting to try to deduce, from the known result for $\cstar$, that the analogous three properties are, for metrizable $\topspace$, also equivalent for $\lone$. It follows obviously from the result for $\cstar$ that each irreducible involutive representation of $\lone$ is unitarily equivalent with an irreducible involutive representation, arising from a pure state extension of a point evaluations as above, precisely when $c(\homeo)=\perpoints$. The question is harder, however, whether an involutive representation of $\lone$ is then also uniquely determined, up to unitary equivalence, by its primitive ideal. The obstacle (if it should be true) for the obvious ```proof'' is that, while the involutive representations of $\lone$ and $\cstar$ are in natural bijection, the relation is not so clear for primitive ideals: If two irreducible involutive representations of $\lone$ with the same kernel are extended to irreducible involutive representations of $\cstar$, then there is no obvious reason why these extended irreducible involutive representations should have the same kernel in $\cstar$.
\end{enumerate}
\end{remark}

We collect a number of consequences of Proposition~\ref{p:properties_of_families}; the first follows by inspection.

\begin{corollary}\label{c:possible_proper_inclusions} Let $I,J\in
\{\piaper{x} : x\in\aperpoints\}\,\cup\,\{\piper{x}{\lambda} : x\in\perpoints,\,\lambda\in\T\}\,\cup\,\{\piperintersection{x} : x\in\perpoints\}$. Then the only possible proper inclusions $I\subsetneqq J$ are the following.
\begin{enumerate}
\item $\piaper{x_1}\subsetneqq\piaper{x_2}$ $(x_1,x_2\in\aperpoints)$: this holds if and only if $\orbitclosure{x_1}\supsetneqq\orbitclosure{x_2}$;
\item $\piaper{x_1}\subsetneqq\piper{x_2}{\lambda}$ $(x_1\in\aperpoints,\, x_2\in\perpoints,\lambda\in\T)$: this holds if and only if $\orbitclosure{x_1}\supset\orbit{x_2}$;
\item $\piaper{x_1}\subsetneqq\piperintersection{x_2}$ $(x_1\in\aperpoints,\, x_2\in\perpoints)$: this holds if and only if $\orbitclosure{x_1}\supset\orbit{x_2}$;
\item $\piperintersection{x_1}\subsetneqq\piper{x_2}{\lambda}$ $(x_1,x_2\in\perpoints,\lambda\in\T)$: this holds if and only if $\orbit{x_1}=\orbit{x_2}$.
\end{enumerate}
\end{corollary}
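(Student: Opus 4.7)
The plan is to carry out a systematic case analysis over all nine ordered pairs of families, since the corollary is effectively a bookkeeping consolidation of Proposition~\ref{p:properties_of_families}. For each of the nine types of pair $(I,J)$ drawn from the three families $\{\piaper{x}\}$, $\{\piper{x}{\lambda}\}$, $\{\piperintersection{x}\}$, I invoke the relevant subitem of Proposition~\ref{p:properties_of_families} to decide whether $I\subset J$ can occur and, if so, under which condition, and then separately argue whether the inclusion can actually be proper.

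First I would dispatch the cases in which no proper inclusion arises. For $\piper{x_1}{\lambda_1}\subset\piper{x_2}{\lambda_2}$, part (3)(b) of Proposition~\ref{p:properties_of_families} forces equality, and similarly for $\piperintersection{x_1}\subset\piperintersection{x_2}$ by part (3)(c). The three ``reverse'' cross-family cases $\piper{x_1}{\lambda}\subset\piaper{x_2}$, $\piperintersection{x_1}\subset\piaper{x_2}$, and $\piper{x_1}{\lambda}\subset\piperintersection{x_2}$ never occur by parts (4)(a)(i), (4)(b)(i) and (4)(c)(i) respectively. These exclusions leave exactly the four types listed in the statement.

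For the four surviving cases, I would read off the conditions directly from the proposition. Case (1), $\piaper{x_1}\subsetneqq\piaper{x_2}$ with $x_1,x_2\in\aperpoints$, follows from (3)(a): the inclusion is equivalent to $\orbitclosure{x_1}\supset\orbitclosure{x_2}$, and properness corresponds to strict containment. Cases (2) and (3) use (4)(a)(ii) and (4)(b)(ii), giving $\piaper{x_1}\subset\piper{x_2}{\lambda}$ (respectively $\piaper{x_1}\subset\piperintersection{x_2}$) iff $\orbitclosure{x_1}\supset\orbit{x_2}$. Case (4) uses (4)(c)(ii): $\piperintersection{x_1}\subset\piper{x_2}{\lambda}$ iff $\orbit{x_1}=\orbit{x_2}$.

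The only small point needing attention is that in cases (2), (3), and (4) the inclusion is automatically proper whenever it holds, so no additional condition beyond the orbit/orbit-closure criterion is needed. This follows from the pairwise disjointness of the three families established in part (2) of Proposition~\ref{p:properties_of_families}: the ideals in (2) and (4) are distinguished as badly behaved versus well behaved via Proposition~\ref{p:behaviour_of_families} together with Lemma~\ref{l:behaved_lemma}(4), while in case (3) both ideals are well behaved but part (2) of Proposition~\ref{p:properties_of_families} rules out equality between a $\piaper{x}$ and a $\piperintersection{x'}$. There is no real obstacle here; the only care required is not to conflate ``inclusion'' with ``proper inclusion'' in case (1), where equality of $\piaper{x_1}$ and $\piaper{x_2}$ can genuinely occur without $x_1\in\orbit{x_2}$ (the relevant equivalence relation being equality of orbit closures, i.e., quasi-orbits, as highlighted in Remark~\ref{r:not_type_I_remark}).
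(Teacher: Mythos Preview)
Your proof is correct and follows exactly the approach the paper indicates: the paper simply notes that the corollary ``follows by inspection'' of Proposition~\ref{p:properties_of_families}, and you have carried out that inspection in full detail, correctly handling all nine ordered pairs and using part~(2) of Proposition~\ref{p:properties_of_families} to conclude automatic properness in cases (2)--(4).
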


Part (4)(a) and (4)(b) of Proposition~\ref{p:properties_of_families} imply the following.

\begin{corollary}\label{c:one_lambda_implies_all}
Let $x_1\in\aperpoints$ and $x_2\in\perpoints$. Then the following are equivalent:
\begin{enumerate}
\item There exists $\lambda\in\T$ such that $\piaper{x_1}\subset\piper{x_2}{\lambda}$;
\item $\orbitclosure{x_1}\supset\orbit{x_2}$;
\item $\piaper{x_1}\subset\piperintersection{x_2}=\bigcap_{\lambda\in\T} \piper{x_2}{\lambda}$.
\end{enumerate}
\end{corollary}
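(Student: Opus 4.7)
The plan is to observe that the corollary is an almost immediate reshuffling of parts (4)(a)(ii) and (4)(b)(ii) of Proposition~\ref{p:properties_of_families}, with the nontrivial content being simply that these two items share the same right-hand side $\orbitclosure{x_1}\supset\orbit{x_2}$.

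First I would establish (1)$\Leftrightarrow$(2). The implication (1)$\Rightarrow$(2) is immediate: if $\piaper{x_1}\subset\piper{x_2}{\lambda}$ for \emph{some} $\lambda\in\T$, then part (4)(a)(ii) of Proposition~\ref{p:properties_of_families}, applied to that particular $\lambda$, yields $\orbitclosure{x_1}\supset\orbit{x_2}$. Conversely, if (2) holds, then (4)(a)(ii) (in its other direction) gives $\piaper{x_1}\subset\piper{x_2}{\lambda}$ for \emph{every} $\lambda\in\T$, and picking any one such $\lambda$ (e.g., $\lambda=1$) produces (1).

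Next I would handle (2)$\Leftrightarrow$(3). This is nothing other than a direct quotation of part (4)(b)(ii) of Proposition~\ref{p:properties_of_families}, together with the definition $\piperintersection{x_2}=\bigcap_{\lambda\in\T}\piper{x_2}{\lambda}$ which is already displayed in the statement. There is nothing further to verify.

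There is no real obstacle here; the point of the corollary, and the only substantive remark to make, is the conceptual one: although condition (1) is ostensibly much weaker than condition (3)---requiring $\piaper{x_1}$ to sit inside only a single $\piper{x_2}{\lambda}$ rather than inside the intersection over all $\lambda\in\T$---the two conditions are in fact equivalent, because their common criterion on the dynamics is the orbit-closure containment $\orbitclosure{x_1}\supset\orbit{x_2}$. This is the upshot one wishes to record, and it is precisely what the side-by-side application of (4)(a)(ii) and (4)(b)(ii) delivers.
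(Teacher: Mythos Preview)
Your proof is correct and follows exactly the paper's approach: the paper simply states that parts (4)(a) and (4)(b) of Proposition~\ref{p:properties_of_families} imply the corollary, and you have spelled out precisely how those two parts combine via the shared condition $\orbitclosure{x_1}\supset\orbit{x_2}$.
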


\section{Spectral synthesis: $\coeffalg$-model}\label{sec:function_space_model}

In this section we introduce the noncommutative hull and kernel for ideals of $\lone$, modeled after $\coeffalg$ in a manner analogous to that in \cite{T3}, and study the problem of spectral synthesis.

\begin{definition}
For for a linear subspace $I$ of $\lone$ define its noncommutative hull, $\Hull(I)$, as
\[
\Hull(I) = \{ x\in \topspace :  a_n(x) = 0\textup{ for all } a=\loneelement{n}\in I\textup{ and all } n\in\Z\},
\]
and for a subset $S$ of $\topspace$ define its noncommutative kernel, $\Kernel(S)$, as
\[
\Kernel(S) = \{ a=\loneelement{n} \in \lone : a_n\rest S = 0 \textup{ for all }n\in\Z\},
\]
with the usual convention that $\Kernel(\emptyset)=\lone$.
\end{definition}

The following two results are routinely verified, using part (5)(b) of Lemma~\ref{l:basic_E_properties} for the penultimate statement in Lemma~
\ref{l:basic_Hull_properties}.

\begin{lemma}\label{l:basic_Hull_properties}
Let $I$ be a linear subspace of $\lone$. Then:
\begin{enumerate}
\item $\Hull(I)$ is a closed subset of $\topspace$;
\item If $I^\prime$ is a linear subspace of $\lone$, and $I^\prime\subset I$, then $\Hull(I^\prime)\supset\Hull(I)$.
\item $\Hull(I)=\Hull(\bar I)$;
\item $\Hull(I)=\topspace$ if and only if $I=\{0\}$.
\end{enumerate}
If $\{I_\alpha : \alpha\in A\}$ is a collection of linear subspaces of $\lone$, then:
\begin{enumerate}
\item[(5)] $\Hull(\sum_{\alpha\in A} I_\alpha)=\bigcap_{\alpha\in A}\Hull(I_\alpha)$;
\item[(6)] $\Hull(\bigcap_{\alpha\in A} I_\alpha) \supset \bigcup_{\alpha\in A}\Hull(I_\alpha)$.
\end{enumerate}
If $I$ is an ideal of $\lone$, then:
\begin{enumerate}
\item[(7)] $\Hull(I)$ is a $\homeo$-invariant closed subset of $\topspace$;
\item[(8)] $\Hull(I)=\hull(\Eone(I))$;
\item[(9)] $\Hull(I)=\emptyset$ if and only if $\overline{\Eone(I)}=\coeffalg$.
\end{enumerate}
\end{lemma}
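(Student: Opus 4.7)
Parts (1)--(4) rest on the single observation that the coefficient extraction $a\mapsto a_n$ is a contractive linear map from $\lone$ to $\coeffalg$. From this, $\Hull(I)=\bigcap_{a\in I,\,n\in\Z}\{x\in\topspace : a_n(x)=0\}$ is an intersection of closed sets, giving (1), and (2) is the monotonicity of this intersection. Part (3) then follows: one inclusion is (2), and for the other one uses that if $a^{(k)}\to a$ in $\lone$, then $a_n^{(k)}\to a_n$ in $\coeffalg$, preserving pointwise vanishing at a given $x$. Part (4) is immediate, since $a_n(x)=0$ for all $x\in\topspace$ forces $a_n=0$ for all $n$, and therefore $a=0$.

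For parts (5) and (6), (6) is immediate from (2) since $\bigcap_{\alpha\in A} I_\alpha\subset I_\alpha$ for each $\alpha$. For (5), the inclusion ``$\subset$'' is again (2); for ``$\supset$'', if $x\in\bigcap_{\alpha\in A}\Hull(I_\alpha)$ and $a=\sum_{\alpha\in F}a^{(\alpha)}$ is a finite sum with $F\subset A$ finite and $a^{(\alpha)}\in I_\alpha$, then the $n$-th coefficient of $a$ is $\sum_{\alpha\in F}a^{(\alpha)}_n$, which vanishes at $x$ term by term.

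For parts (7)--(9), the plan is first to establish (8) and then to derive (7) and (9) as corollaries. Part (8) is immediate from part (5)(b) of Lemma~\ref{l:basic_E_properties}: the equality $\Eone(I)=\{a_n : a=\loneelement{n}\in I\}$ identifies the defining vanishing conditions of $\Hull(I)$ and $\hull(\Eone(I))$. Given (8), part (7) follows because $\Eone(I)$ is an $\alpha$-invariant ideal of $\coeffalg$ by part (5)(a) of Lemma~\ref{l:basic_E_properties}, and the classical hull of such an ideal is a $\homeo$-invariant closed subset of $\topspace$, as recorded in the introduction of Section~\ref{sec:preliminaries}. Part (9) then reduces to the classical regularity of $\coeffalg$: by (8) together with $\hull(J)=\hull(\bar J)$, $\Hull(I)=\emptyset$ if and only if the closed ideal $\overline{\Eone(I)}$ has empty classical hull, which in turn is equivalent to $\overline{\Eone(I)}=\coeffalg$ via the classical $\hull$--$\kernel$ bijection.

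No serious obstacle is anticipated; the lemma is essentially a bookkeeping exercise, and the only minor organisational choice is to route (7) and (9) through the pivotal identity (8) rather than argue them directly from the definitions.
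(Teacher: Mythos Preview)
Your proof is correct and follows exactly the approach the paper signals: it declares the lemma ``routinely verified'' with the single hint that part~(5)(b) of Lemma~\ref{l:basic_E_properties} is used for part~(8), which is precisely what you do, and your choice to route (7) and (9) through (8) is the natural implementation of this hint.
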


\begin{lemma}\label{l:basic_Kernel_properties} Let $S\subset\topspace$. Then:
\begin{enumerate}
\item $\Kernel(S)=\Laurentseries{\kernel(S)}$ is a closed $\coeffalg$-subbimodule of $\lone$, which is right invariant under $\delta$ and $\delta^{-1}$;
\item If $S^\prime\subset S$, then $\Kernel(S^\prime)\supset\Kernel(S)$;
\item $\Kernel(S)=\Kernel(\bar S)$;
\item If $S$ is $\homeo$-invariant, then $\Kernel(S)$ is a well behaved \ulb self-adjoint\urb\ closed ideal of $\lone$, and $\Eone(\Kernel(S))=\kernel(S)$;
\item $\Kernel(S)=\lone$ if and only if $S=\emptyset$, and $\Kernel(S)=\{0\}$ if and only if $\bar S=\topspace$;
\end{enumerate}
If $\{S_\alpha : \alpha\in A\}$ is a collection of subsets of $\topspace$, then:
\begin{enumerate}
\item[(6)] $\Kernel(\bigcup_{\alpha\in A}S_\alpha)=\bigcap_{\alpha\in A}\Kernel(S_\alpha)$;
\item[(7)] $\Kernel(\bigcap_{\alpha\in A}S_\alpha)\supset\sum_{\alpha\in A}\Kernel(S_\alpha)$.
\end{enumerate}
\end{lemma}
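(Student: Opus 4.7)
The plan is to first observe that, directly from the two definitions, $\Kernel(S)=\Laurentseries{\kernel(S)}$: the condition $a_n\rest{S}=0$ is precisely the statement $a_n\in\kernel(S)$. With this identification in hand, every clause of the lemma reduces to a fact about $\kernel$ on $\coeffalg$ combined with one of the coefficient-wise assertions already collected in Lemma~\ref{l:Laurent_properties} and Lemma~\ref{l:basic_E_properties}; the work is just bookkeeping.

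For part~(1), closedness of $\Kernel(S)=\Laurentseries{\kernel(S)}$ follows from Lemma~\ref{l:Laurent_properties}(2) because $\kernel(S)$ is closed in $\coeffalg$. The $\coeffalg$-subbimodule structure is a coefficient-by-coefficient check: if $a=\loneelement{n}\in\Kernel(S)$ and $f\in\coeffalg$, then $f\cdot a$ has $n$-th coefficient $f\,a_n$ and $a\cdot f$ has $n$-th coefficient $a_n\cdot\alpha^n(f)$, both of which still vanish on $S$ since $a_n$ does. For right-invariance under $\delta^{\pm 1}$, right multiplication by $\delta^k$ merely shifts the index labels of the coefficients, so their values, and hence their vanishing on $S$, are untouched.

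Parts~(2), (3) and (5) translate directly: monotonicity of $\kernel$ and the identity $\kernel(S)=\kernel(\bar S)$ are standard, while $\kernel(\emptyset)=\coeffalg$ and (via Urysohn's lemma) $\kernel(\bar S)=\{0\}\iff\bar S=\topspace$ transfer through $\Kernel(S)=\Laurentseries{\kernel(S)}$ to give (5). For part~(4), once $S$ is $\homeo$-invariant, $\kernel(S)$ becomes an $\alpha$-invariant closed ideal of $\coeffalg$, so Lemma~\ref{l:Laurent_properties}(4) promotes $\Kernel(S)=\Laurentseries{\kernel(S)}$ to a self-adjoint closed ideal of $\lone$. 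The equality $\Eone(\Kernel(S))=\kernel(S)$ is then immediate from Lemma~\ref{l:basic_E_properties}(5)(b), and well-behavedness follows because $\kernel(S)\subset\Laurentseries{\kernel(S)}=\Kernel(S)$ via the embedding $f\mapsto f\delta^0$.

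For the last two parts, one starts from the pointwise vanishing identity $\kernel(\bigcup_{\alpha\in A}S_\alpha)=\bigcap_{\alpha\in A}\kernel(S_\alpha)$, and then applies Lemma~\ref{l:Laurent_properties}(5)(b) to interchange $\Laurentseries{\cdot}$ with the intersection, giving~(6). For~(7), since $\bigcap_{\alpha\in A}S_\alpha\subset S_\beta$ for every $\beta$, monotonicity from~(2) yields $\Kernel(\bigcap_{\alpha\in A}S_\alpha)\supset\Kernel(S_\beta)$, and summing over $\beta$ delivers the stated inclusion. There is no genuine obstacle anywhere in the argument; the only points worth a moment's care are verifying the bimodule compatibility coefficient-wise in~(1) and keeping straight that the $\homeo$-invariance of $S$ is needed only in~(4) (without it, $\Kernel(S)$ is generally not left-invariant under $\delta$, which is why~(1) asserts only right-invariance).
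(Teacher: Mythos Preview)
Your proof is correct and matches the paper's approach: the paper simply declares this lemma ``routinely verified'' without further detail, and what you have written is exactly the routine verification it has in mind, reducing everything to properties of $\kernel$ on $\coeffalg$ together with Lemma~\ref{l:Laurent_properties}. Your closing remark about why only right-invariance under $\delta$ holds in part~(1) (left multiplication by $\delta$ replaces $a_n$ by $\alpha(a_n)=a_n\circ\homeo^{-1}$, which need not vanish on $S$ unless $S$ is $\homeo$-invariant) is a nice clarification that the paper leaves implicit.
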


\begin{lemma}\label{l:Hull_Kernel_composition}\quad
\begin{enumerate}
\item If $S \subset\topspace$ is $\sigma$-invariant, then $\Hull\Kernel(S)=\hull\kernel(S)=\bar S$;
\item If $I$ is an ideal of $\lone$, then $\Kernel\Hull(I)=\Laurentseries{\overline{\Eone(I)}}=\overline{\Laurentseries{\Eone(I)}}\supset I$.
\end{enumerate}
\end{lemma}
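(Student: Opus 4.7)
The proof is a routine bookkeeping exercise, and the plan is simply to track which previously established result to invoke at each step.

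For part (1), the plan is to first observe that $\Kernel(S) = \Laurentseries{\kernel(S)}$ by part (1) of Lemma~\ref{l:basic_Kernel_properties}, so that by direct inspection of the definition of $\Hull$ we have
\[
\Hull\Kernel(S) = \{x \in \topspace : f(x) = 0 \textup{ for all } f \in \kernel(S)\} = \hull\kernel(S).
\]
The second equality $\hull\kernel(S) = \bar S$ is the classical regularity statement for $\coeffalg$ recalled in the preliminaries. The $\sigma$-invariance of $S$ is not actually used for the computation itself, but it ensures (via part (4) of Lemma~\ref{l:basic_Kernel_properties}) that $\Kernel(S)$ is a well behaved closed ideal, so that the statement fits into the hull-kernel framework for ideals.

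For part (2), the approach is to chain together the analogues at the commutative level and at the Laurent-series level. First, Lemma~\ref{l:basic_Hull_properties}(8) gives $\Hull(I) = \hull(\Eone(I))$, so by part (1) of Lemma~\ref{l:basic_Kernel_properties},
\[
\Kernel\Hull(I) = \Kernel(\hull(\Eone(I))) = \Laurentseries{\kernel\hull(\Eone(I))}.
\]
Since $\Eone(I)$ is an $\alpha$-invariant ideal of $\coeffalg$ by part (5)(a) of Lemma~\ref{l:basic_E_properties}, the regularity of $\coeffalg$ yields $\kernel\hull(\Eone(I)) = \overline{\Eone(I)}$, giving the first claimed equality $\Kernel\Hull(I) = \Laurentseries{\overline{\Eone(I)}}$.

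For the second equality, I would invoke part (1) of Lemma~\ref{l:Laurent_properties} directly, which states that $\overline{\Laurentseries{L}} = \Laurentseries{\bar L}$ for any linear subspace $L$ of $\coeffalg$; applied to $L = \Eone(I)$ this is precisely $\overline{\Laurentseries{\Eone(I)}} = \Laurentseries{\overline{\Eone(I)}}$. Finally, the inclusion $\Laurentseries{\overline{\Eone(I)}} \supset I$ follows by combining part (5)(c) of Lemma~\ref{l:basic_E_properties}, which gives $I \subset \Laurentseries{\Eone(I)}$, with the trivial monotonicity $\Laurentseries{\Eone(I)} \subset \Laurentseries{\overline{\Eone(I)}}$. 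There is no real obstacle here; the only thing to be careful about is making sure each invocation of a preliminary lemma is applied to an object that actually satisfies its hypotheses (notably, $\alpha$-invariance of $\Eone(I)$ for the regularity step, and the fact that $\Eone(I)$ is genuinely an ideal of $\coeffalg$ so that one is entitled to speak of $\hull$ and $\kernel$ in the classical sense).
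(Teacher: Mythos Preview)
Your proposal is correct and follows essentially the same route as the paper's proof: both parts proceed by reducing to the classical hull--kernel correspondence for $\coeffalg$ via $\Kernel(S)=\Laurentseries{\kernel(S)}$ and $\Hull(I)=\hull(\Eone(I))$, then invoking regularity and Lemma~\ref{l:Laurent_properties}. The only cosmetic difference is that in part~(1) you verify $\Hull(\Laurentseries{\kernel(S)})=\hull\kernel(S)$ by direct inspection, whereas the paper routes this through part~(8) of Lemma~\ref{l:basic_Hull_properties} and Corollary~\ref{c:characterisation_well_behaved_closed_ideals}; your observation that the $\sigma$-invariance is not actually needed for the computation is correct (it is used in the paper's route to ensure $\Laurentseries{\kernel(S)}$ is an ideal so that part~(8) applies).
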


\begin{proof}
For the first part, an application of Lemma~\ref{l:basic_Hull_properties}, Lemma~\ref{l:basic_Kernel_properties}, and the first part of Corollary~\ref{c:characterisation_well_behaved_closed_ideals} shows that
\begin{align*}
\Hull\Kernel(S)&=\Hull\left[\Laurentseries{\kernel(S)}\right]\\
&=\hull\left[\Eone[\Laurentseries{\kernel(S)}]\right]\\
&=\hull\kernel(S)\\
&=\bar S.
\end{align*}
For the second part, using Lemma~\ref{l:basic_Hull_properties}, Lemma~\ref{l:basic_Kernel_properties}, and Lemma~\ref{l:Laurent_properties}, we have
\begin{align*}
\Kernel\Hull(I)&=\Kernel\left[\hull(\Eone(I))\right]\\
&=\Laurentseries{\left[\kernel\hull(\Eone(I))\right]}\\
&=\Laurentseries{\overline{\Eone(I)}}\\
&=\overline{\Laurentseries{\Eone(I)}}\\
&\supset\Laurentseries{\Eone(I)}\\
&\supset I.
\end{align*}
\end{proof}

\begin{corollary}\label{c:fixed_Kernel_Hull_is_well_behaved}
Let $I$ be an ideal of $\lone$. Then $\Kernel\Hull(I)=I$ if an only if $I$ is a well behaved \ulb self-adjoint\urb\ closed ideal.
\end{corollary}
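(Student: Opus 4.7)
The plan is to prove the two implications separately, invoking the preparatory lemmas already in place.

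For the forward direction, I would assume $\Kernel\Hull(I) = I$ and simply observe that $\Hull(I)$ is, by Lemma~\ref{l:basic_Hull_properties}(7), a $\homeo$-invariant closed subset of $\topspace$. Then Lemma~\ref{l:basic_Kernel_properties}(4) applied with $S = \Hull(I)$ tells me that $\Kernel(\Hull(I))$ is a well behaved self-adjoint closed ideal of $\lone$, so $I$ inherits all three properties.

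For the reverse direction, I would assume $I$ is a well behaved closed ideal (the self-adjointness being automatic by Corollary~\ref{c:characterisation_well_behaved_closed_ideals}(2), which is why the word is parenthesised). By Corollary~\ref{c:characterisation_well_behaved_closed_ideals}(1) I may write $I = \Laurentseries{\Eone(I)}$, where $\Eone(I)$ is an $\alpha$-invariant closed ideal of $\coeffalg$. Lemma~\ref{l:Hull_Kernel_composition}(2) then gives
\[
\Kernel\Hull(I) = \overline{\Laurentseries{\Eone(I)}},
\]
and the closedness of $\Eone(I)$ combined with Lemma~\ref{l:Laurent_properties}(2) ensures that $\Laurentseries{\Eone(I)}$ is already closed, so the bar is redundant and $\Kernel\Hull(I) = \Laurentseries{\Eone(I)} = I$.

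There is essentially no obstacle: both directions follow by direct appeal to previously established structural lemmas. The only mild subtlety is bookkeeping, namely that Lemma~\ref{l:Hull_Kernel_composition}(2) delivers $\Kernel\Hull(I)$ as a closure, and one needs the equivalence ``$\Laurentseries{L}$ closed $\iff L$ closed'' to drop the closure symbol when $I$ is already known to be well behaved and closed. This is exactly the content of Lemma~\ref{l:Laurent_properties}(2), so the argument is a short assembly of previous results rather than anything genuinely new.
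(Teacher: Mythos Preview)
Your proposal is correct and follows essentially the same route as the paper: the forward direction invokes Lemma~\ref{l:basic_Kernel_properties} (the paper does not even bother citing Lemma~\ref{l:basic_Hull_properties}(7) explicitly), and the reverse direction combines Lemma~\ref{l:Hull_Kernel_composition}(2) with Corollary~\ref{c:characterisation_well_behaved_closed_ideals}(1). The only cosmetic difference is that the paper drops the closure bar by writing $\overline{\Laurentseries{\Eone(I)}}=\overline{I}=I$ directly from the closedness of $I$, whereas you go via the closedness of $\Eone(I)$ and Lemma~\ref{l:Laurent_properties}(2); both are equally valid one-line justifications.
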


\begin{proof}
If $\Kernel\Hull(I)=I$, then it is clear from Lemma~\ref{l:basic_Kernel_properties} that $I$ is well behaved, (self-adjoint) and closed. Conversely, if $I$ is well behaved and closed, then the second part of Lemma~\ref{l:Hull_Kernel_composition} and the first part of Corollary~\ref{c:characterisation_well_behaved_closed_ideals} show that $\Kernel\Hull(I)=\overline{\Laurentseries{\Eone(I)}}=\overline{I}=I$.
\end{proof}

It is now possible to give a number of alternative descriptions of well behaved closed ideals in Theorem~\ref{t:ideal_well_behaved} below, reminiscent of similar results for a well behaved closed ideal of $\cstar$ (\cite[Theorem~2]{T2}). For part of the formulation we recall that the dual action $\alpha$ of $\T$ on $\lone$ is the strongly continuous representation $\lambda\mapsto\alpha_\lambda$ ($\lambda\in\T$) of $\T$ as isometric involutive automorphisms of $\lone$ determined by
\begin{equation*}\label{e:dual_action_definition}
\alpha_\lambda(f) = f\quad (f \in \coeffalg),\quad \alpha _\lambda( \delta^n) = \lambda^n \delta^n\quad(n\in\Z),
\end{equation*}
for $\lambda\in\T$. Hence, if $a=\loneelement{n}\in\lone$, and $\lambda\in\T$, then $\alpha_\lambda(a) = \sum_n \lambda^n a_n\delta^n$. Therefore the relation
\begin{equation}\label{e:dual_action_formula}
E(a) = \int_T \alpha_\lambda(a)\,d\lambda,
 \end{equation}
which needs some proof in the case of $\cstar$, is rather obvious for $\lone$.

\begin{theorem}\label{t:ideal_well_behaved}
Let $I$ be a closed ideal of $\lone$. Then the following are equivalent:
\begin{enumerate}
\item $I$ is well behaved;
\item $\Eone(I) = I \cap \coeffalg$;
\item $I = \Laurentseries{\Eone(I)}$;
\item $I = \Kernel\Hull(I)$;
\item If $I^\prime$ is a closed ideal of $\lone$, then $I^\prime\subset I$ if and only if $\Hull(I^\prime)\supset\Hull(I)$;
\item $I$ is invariant under the dual action of $\T$ on $\lone$;
\item There exist $\setofaperpoints\subset\aperpoints$ and $\setofperpoints\subset\perpoints$ such that
\begin{equation*}
I=\bigcap_{x\in\setofaperpoints}\piaper{x}\,\bigcap_{x\in\setofperpoints}\piperintersection{x}.
\end{equation*}
\end{enumerate}
In that case, if $\setofaperpoints\subset\aperpoints$ and $\setofperpoints\subset\perpoints$ are chosen such that
\begin{equation}\label{e:union_of_orbit_closures}
\hull(\Eone(I))=\overline{\bigcup_{x\in\setofaperpoints}\orbit{x}\,\cup\,\bigcup_{x\in\setofperpoints}\orbit{x}},
\end{equation}
giving the $\homeo$-invariant closed subset $\hull(\Eone(I))$ as the closure of a union of orbits, then
\begin{equation}\label{e:explicit_intersection_of_orbitclosure_ideals}
I=\bigcap_{x\in\setofaperpoints}\piaper{x}\,\bigcap_{x\in\setofperpoints}\piperintersection{x}
\end{equation}
establishes $I$ explicitly as an intersection as in part \ulb \textup{6}\urb. If, furthermore, $\T_x$ is dense in $\T$, for all $x\in\setofperpoints$, then
\begin{equation}\label{e:explicit_intersection_of_primitive_ideals}
I = \bigcap_{x\in\setofaperpoints}\piaper{x} \bigcap_{x\in\setofperpoints}\bigcap_{\lambda\in\T_x}\piper{x}{\lambda},
\end{equation}
establishes $I$ as an intersection of primitive ideals corresponding to pure state extensions of evaluations in points in  $\setofaperpoints\cup\setofperpoints$.
\end{theorem}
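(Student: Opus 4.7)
The plan is to establish all seven equivalences by chaining together results from the preliminaries, and then to extract the two explicit intersection formulas. I would dispatch $(1)\!\Leftrightarrow\!(2)\!\Leftrightarrow\!(3)\!\Leftrightarrow\!(4)$ as a single cluster: $(1)\!\Leftrightarrow\!(2)$ is Lemma~\ref{l:well_behaved_ideals}(1); $(1)\!\Leftrightarrow\!(3)$ is read off the bijection in Corollary~\ref{c:characterisation_well_behaved_closed_ideals}, whose inverse is $J\mapsto\Laurentseries{J}$; and $(1)\!\Leftrightarrow\!(4)$ is precisely Corollary~\ref{c:fixed_Kernel_Hull_is_well_behaved}. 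The equivalence $(4)\!\Leftrightarrow\!(5)$ is then pure Galois-connection bookkeeping: assuming (4), monotonicity of $\Hull$ and $\Kernel$ together with Lemma~\ref{l:Hull_Kernel_composition}(2) forces $I'\subset\Kernel\Hull(I')\subset\Kernel\Hull(I)=I$ whenever $\Hull(I')\supset\Hull(I)$, while the converse direction is immediate; conversely, setting $I':=\Kernel\Hull(I)$ and applying (5) in both directions (using that $\Hull(I')=\Hull(I)$ by Lemma~\ref{l:Hull_Kernel_composition}(1), since $\Hull(I)$ is $\homeo$-invariant and closed) gives $I=\Kernel\Hull(I)$.

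Next, for $(1)\!\Leftrightarrow\!(6)$ I would exploit the conditional-expectation integral identity~(\ref{e:dual_action_formula}). If $I$ is closed and invariant under the dual action, applying this identity to each $a\in I$ places $\Eone(a)$ in $I$ by norm-closedness, which is (1). Conversely, having $I=\Laurentseries{\Eone(I)}$ from (3), the formula $\alpha_\lambda(\sum_n a_n\delta^n)=\sum_n \lambda^n a_n\delta^n$ makes dual-invariance manifest. The implication $(7)\!\Rightarrow\!(1)$ is then immediate from Proposition~\ref{p:behaviour_of_families} (each $\piaper{x}$ and each $\piperintersection{x}$ is well behaved) together with Lemma~\ref{l:behaved_lemma}(1) (arbitrary intersections of well-behaved ideals remain well behaved).

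The remaining implication $(1)\!\Rightarrow\!(7)$ I would handle simultaneously with the two explicit formulas. Set $S:=\hull(\Eone(I))=\Hull(I)$, a $\homeo$-invariant closed subset of $\topspace$; the decomposition required in (\ref{e:union_of_orbit_closures}) exists trivially by taking $\setofaperpoints:=S\cap\aperpoints$ and $\setofperpoints:=S\cap\perpoints$, which needs no closure at all. For any such choice, $\kernel(S)=\bigcap_{x\in\setofaperpoints}\kernel(\orbitclosure{x})\cap\bigcap_{x\in\setofperpoints}\kernel(\orbit{x})$, since $\kernel$ is insensitive to closure and sends unions to intersections. Applying $\Laurentseries{\cdot}$, which commutes with intersections by Lemma~\ref{l:Laurent_properties}(5)(b), and translating back via the identifications $\piaper{x}=\Laurentseries{\kernel(\orbitclosure{x})}$ and $\piperintersection{x}=\Laurentseries{\kernel(\orbit{x})}$ recorded in Corollary~\ref{c:orbit_closure_injectivemap}, delivers (\ref{e:explicit_intersection_of_orbitclosure_ideals}) from $I=\Laurentseries{\Eone(I)}=\Laurentseries{\kernel(S)}$. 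The refined formula (\ref{e:explicit_intersection_of_primitive_ideals}) then follows by substituting $\piperintersection{x}=\bigcap_{\lambda\in\T_x}\piper{x}{\lambda}$ using Corollary~\ref{c:dense_subset_intersection}. The only genuine delicacy in the whole argument is ensuring that $\Eone(I)$ really is a closed $\alpha$-invariant ideal of $\coeffalg$, so that $\kernel\hull(\Eone(I))=\Eone(I)$ under the regularity of $\coeffalg$; this is supplied by Lemma~\ref{l:well_behaved_ideals}(2) under the standing well-behaved hypothesis, and without that step the bridge between the noncommutative $\Hull,\Kernel$ and the classical $\hull,\kernel$ would fail to close.
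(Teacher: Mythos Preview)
Your proof is correct and follows essentially the same route as the paper's: the same lemmas are invoked for the same implications, and the explicit intersection formulas are derived in the same way via $\Eone(I)=\kernel\hull(\Eone(I))$ and the intersection-compatibility of $\Laurentseries{\cdot}$. The only cosmetic difference is that for $(4)\Leftrightarrow(5)$ you unpack the Galois-connection argument directly, whereas the paper simply cites Lemma~\ref{l:order_inflection} from the Appendix; your phrasing ``applying (5) in both directions'' for $(5)\Rightarrow(4)$ is slightly loose---the inclusion $I\subset\Kernel\Hull(I)$ comes from Lemma~\ref{l:Hull_Kernel_composition}(2) rather than from (5)---but the argument is sound.
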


\begin{proof}
The equivalence of (1) and (2) is the first part of Lemma~\ref{l:well_behaved_ideals}. Corollary~\ref{c:characterisation_well_behaved_closed_ideals} shows that (1) implies (3). If (3) holds, then part (2) of Lemma~\ref{l:Laurent_properties} and the fact that $I$ is closed imply that $\Eone(I)$ is closed, hence Corollary~\ref{c:characterisation_well_behaved_closed_ideals} shows that $I$ is well behaved. Hence (1) and (3) are equivalent, and Corollary~\ref{c:fixed_Kernel_Hull_is_well_behaved} shows that (1) and (4) are equivalent. The equivalence of (4) and (5) is Lemma~\ref{l:order_inflection}. If (6) holds, then \eqref{e:dual_action_formula} makes it clear that $\Eone(I)\subset I$, hence $I$ is well behaved, and (6) implies (1). If (3) holds, then the definition of the dual action shows that (6) holds as well. Since all ideals in the intersection in (7) are well behaved by Proposition~\ref{p:behaviour_of_families}, their intersection is likewise a well behaved ideal by virtue of Lemma~\ref{l:behaved_lemma}. Hence (7) implies (1), and the proof will be finished once we establish that (3) implies (7). While doing so, we will establish the remaining statements as well.

Assume, then, that (3) holds. Since $\Eone(I)$ is an $\alpha$-invariant ideal of $\coeffalg$, $\hull(\Eone(I))$ is a closed $\homeo$-invariant subset of $\topspace$. Consequently, a choice of $\setofaperpoints\subset\aperpoints$ and $\setofperpoints\subset\perpoints$ such that \eqref{e:union_of_orbit_closures} holds is certainly possible. As already observed, the validity of (3) implies that $\Eone(I)$ is closed. Hence $\Eone(I)=\kernel\hull(\Eone(I))$, yielding
\begin{equation*}\label{e:Eone_I_as_intersection}
\Eone(I)=\bigcap_{x\in\setofaperpoints}\kernel(\orbitclosure{x})\,\bigcap_{x\in\setofperpoints}\kernel(\orbit{x}).
\end{equation*}
Since $I=\Laurentseries{\Eone(I)}$ by assumption, part (5)(b) of Lemma~\ref{l:Laurent_properties} shows that
\[
I=\bigcap_{x\in\setofaperpoints}\Laurentseries{\kernel(\orbitclosure{x})}\,\bigcap_{x\in\setofperpoints}\Laurentseries{\kernel(\orbit{x})}
\]

An appeal to the first and third part of Proposition~\ref{p:behaviour_of_families} then shows that \eqref{e:explicit_intersection_of_orbitclosure_ideals} holds, and Corollary~\ref{c:dense_subset_intersection} transforms \eqref{e:explicit_intersection_of_orbitclosure_ideals} into \eqref{e:explicit_intersection_of_primitive_ideals}.
\end{proof}

We will now consider spectral synthesis in the current model, i.e., investigate the extent to which the operators $\Hull$, $\Kernel$ are mutually inverse. We will make use of the generalities in Appendix~\ref{sec:appendix}, since the combination of Lemma~\ref{l:basic_Hull_properties}, Lemma~\ref{l:basic_Kernel_properties}, and Lemma~\ref{l:Hull_Kernel_composition} shows that we are in the context of Appendix~\ref{sec:appendix}, as is the content of the second sentence of Theorem~\ref{t:hull_kernel_setup_function_model}. Since Corollary~\ref{c:fixed_Kernel_Hull_is_well_behaved} describes the fixed points of $\Kernel\circ\Hull$, Corollary~\ref{c:corollary_of_three_maps_lemma} and Lemma~\ref{l:min_max_lemma} then imply the remaining statements.

Note that part (1), (3) and (4) are valid regardless of the dynamics, and that the validity or failure of spectral synthesis is considered in (2).

\begin{theorem}\label{t:hull_kernel_setup_function_model}
Let $\Hulldomain$ be the set of all closed ideals of $\lone$ and let $\Kerneldomain$ be the set of all $\homeo$-invariant closed subsets of $\topspace$, both ordered by inclusion. Then $\Hull: \Hulldomain\to \Kerneldomain$ and $\Kernel: \Kerneldomain\to \Hulldomain$ are decreasing, $\Kernel\circ\Hull(I)\succ I$ for all $I\in\Hulldomain$, and $\Hull\circ\Kernel=\idmap_{\Kerneldomain}$. Let ${\Hulldomain}_{wb}$ be the set of well behaved closed ideals of $\lone$. Then:
\begin{enumerate}
\item $\Hull:{\Hulldomain}_{wb}\to\Kerneldomain$ and $\Kernel:\Kerneldomain\to {\Hulldomain}_{wb}$ are mutually inverse bijections;
\item The following are equivalent:
\begin{enumerate}
\item $\Hull$ is injective on $\Hulldomain$;
\item Each closed ideal $I$ of $\lone$ is of the form $\Kernel(S)$ for a $\homeo$-invariant closed subset $S$ of $\topspace$;
\item Each closed ideal of $\lone$ is well behaved;
\item For each $\homeo$-invariant closed subset $S$ of $\topspace$, $\Kernel(S)$ is the unique closed ideal $I^\prime$ of $\lone$ such that $\Hull(I^\prime)=S$.
\end{enumerate}
\item For each well behaved closed ideal of $\lone$, $\Hull (I)$ is the unique $\homeo$-invariant closed subset $S^\prime$ of $\topspace$ such that $\Kernel(S^\prime)=I$;
\item If $I$ is a closed ideal of $\lone$, then $\Kernel\Hull(I)$ is the smallest well behaved closed ideal of $\lone$ containing $I$, and also the largest closed ideal $I^\prime$ of $\lone$ such that $\Hull(I^\prime)=\Hull(I)$.
\end{enumerate}
\end{theorem}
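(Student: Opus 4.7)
The plan is to deduce the whole statement from the abstract framework of Appendix~\ref{sec:appendix}, exactly as the paragraph preceding the theorem announces. First I would verify that the pair $(\Hull,\Kernel)$ fits into that framework. Monotonicity of the two maps is part~(2) of Lemmas~\ref{l:basic_Hull_properties} and~\ref{l:basic_Kernel_properties}; the fact that $\Kernel$ actually takes values in $\Hulldomain$ on $\homeo$-invariant closed subsets (indeed, in ${\Hulldomain}_{wb}$) is part~(4) of Lemma~\ref{l:basic_Kernel_properties}; and the inflation $\Kernel\circ\Hull(I)\supset I$ together with the identity $\Hull\circ\Kernel=\idmap_{\Kerneldomain}$ are precisely the two parts of Lemma~\ref{l:Hull_Kernel_composition}. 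This already gives the opening sentence of the theorem and places us in the setting of the appendix.

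Next I would identify the fixed points of $\Kernel\circ\Hull$: by Corollary~\ref{c:fixed_Kernel_Hull_is_well_behaved} these are exactly the elements of ${\Hulldomain}_{wb}$. With the fixed-point set so identified, Corollary~\ref{c:corollary_of_three_maps_lemma} applied to $(\Hull,\Kernel)$ reads off part~(1) (the mutually inverse bijection on well behaved ideals, with $\Kernel$ as inverse) and part~(3) (uniqueness of the $\homeo$-invariant closed $S^\prime$ with $\Kernel(S^\prime)=I$ for well behaved $I$) at once. For part~(2), each of the four clauses unwinds, via the appendix and Corollary~\ref{c:fixed_Kernel_Hull_is_well_behaved}, to the single statement ${\Hulldomain}_{wb}=\Hulldomain$: clause~(a) because $\Hull\circ\Kernel=\idmap_{\Kerneldomain}$ gives $\Hull(I)=\Hull(\Kernel\Hull(I))$ for every $I$, forcing $I=\Kernel\Hull(I)\in{\Hulldomain}_{wb}$ whenever $\Hull$ is injective; clause~(b) because the range of $\Kernel$ lies in ${\Hulldomain}_{wb}$; clause~(c) is the identification itself; and clause~(d) reduces to~(a) by the bijection argument just used for part~(1).

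Finally, part~(4) is Lemma~\ref{l:min_max_lemma}. The element $\Kernel\Hull(I)$ is a fixed point of $\Kernel\circ\Hull$ containing $I$, and any further fixed point $I^\prime\supset I$ satisfies $I^\prime=\Kernel\Hull(I^\prime)\supset\Kernel\Hull(I)$ by monotonicity, giving minimality in ${\Hulldomain}_{wb}$; while, thanks to $\Hull\circ\Kernel=\idmap_{\Kerneldomain}$, the set $\{I^\prime\in\Hulldomain : \Hull(I^\prime)=\Hull(I)\}$ has $\Kernel\Hull(I)$ as its maximum, since any such $I^\prime$ obeys $I^\prime\subset\Kernel\Hull(I^\prime)=\Kernel\Hull(I)$. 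Since the abstract framework does all the real work once its hypotheses are checked, there is no single hard step; the only care required is the correct identification of the fixed points of $\Kernel\circ\Hull$ with the well behaved closed ideals, which is why Corollary~\ref{c:fixed_Kernel_Hull_is_well_behaved} was established in advance.
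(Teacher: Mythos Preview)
Your proposal is correct and follows essentially the same route as the paper: verify the hypotheses of the abstract framework via Lemmas~\ref{l:basic_Hull_properties}, \ref{l:basic_Kernel_properties}, and~\ref{l:Hull_Kernel_composition}, identify the fixed points of $\Kernel\circ\Hull$ as ${\Hulldomain}_{wb}$ via Corollary~\ref{c:fixed_Kernel_Hull_is_well_behaved}, and then read off parts~(1)--(4) from Corollary~\ref{c:corollary_of_three_maps_lemma} and Lemma~\ref{l:min_max_lemma}. Your write-up is in fact somewhat more explicit than the paper's in attributing each part of the theorem to the corresponding abstract statement.
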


For spectral synthesis to hold in this model one needs to have mutually inverse bijections $\Hull$ and $\Kernel$ between $\Hulldomain$ and $\Kerneldomain$. Theorem~\ref{t:hull_kernel_setup_function_model} shows that it is only the injectivity of $\Hull$ on $\Hulldomain$ that is not automatic, because the inclusion $\Hulldomain_{wb}\subset\Hulldomain$ can be proper. For example, part (9) of Lemma~\ref{l:basic_Hull_properties} shows that (in fact also for non-closed ideals) $\Hull(I)=\emptyset$ precisely when $\Eone(I)$ is dense in $\coeffalg$. In particular, this will be the case for each badly behaved closed ideal $I$, and the next example shows that this non-injectivity on $\Hulldomain$ caused by the existence of proper badly behaved closed ideals can be rather substantial.

\begin{example}
If $\topspace$ consists of one point $x$, so that $\lone=\loneZ$, then each non-zero closed ideal $I$ of $\lone$ is badly behaved, as observed in Example~\ref{e:lone_example_behaviour}, hence $\Hull(I)=\emptyset$ for all such $I$.

The bijections in part (1) of Theorem~\ref{t:hull_kernel_setup_function_model} reduce to the trivial decreasing bijections between the tiny part ${\Hulldomain}_{wb}=\{ \{0\},\lone\}$ of $\Hulldomain$ and $\Kerneldomain=\{ \emptyset, \{x\} \}$.
\end{example}

Nevertheless, also in the presence of proper badly behaved closed ideals or of plain closed ideals, the bijection between $\Hulldomain_{wb}$ and $\Kerneldomain$ is informative, provided that it does not reduce to a triviality as for $\loneZ$. The next result describes when this degenerate situation occurs. It should be compared with \cite[Theorem~4.2]{DST}, stating that $\lone$ has only trivial closed ideals (or: only trivial self-adjoint closed ideals), precisely when $\dynsysshort$ is minimal and $\topspace$ has an infinite number of points.

\begin{corollary}\label{c:bijection_part_non_trivial}
The following are equivalent:
\begin{enumerate}
\item The only well behaved closed ideals of $\lone$ are $\{0\}$ and $\lone$;
\item $\dynsysshort$ is minimal, i.e., every point in $\topspace$ has dense orbit.
\end{enumerate}
\end{corollary}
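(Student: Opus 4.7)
The strategy is to invoke part (1) of Theorem~\ref{t:hull_kernel_setup_function_model}, which already does essentially all the work: the pair $(\Hull,\Kernel)$ gives mutually inverse order-reversing bijections between the set $\Hulldomain_{wb}$ of well behaved closed ideals of $\lone$ and the set $\Kerneldomain$ of $\homeo$-invariant closed subsets of $\topspace$. Under this correspondence, the condition that $\Hulldomain_{wb}=\{\{0\},\lone\}$ translates verbatim to the condition that $\Kerneldomain=\{\emptyset,\topspace\}$.

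Concretely, I would first pin down the images of the trivial ideals under $\Hull$. Part (4) of Lemma~\ref{l:basic_Hull_properties} gives $\Hull(\{0\})=\topspace$, and part (9) of the same lemma (or a direct glance at the definition) gives $\Hull(\lone)=\emptyset$. Equivalently, applying $\Kernel$, part (5) of Lemma~\ref{l:basic_Kernel_properties} gives $\Kernel(\topspace)=\{0\}$ and $\Kernel(\emptyset)=\lone$. So the two trivial members of $\Hulldomain_{wb}$ correspond exactly to the two trivial members of $\Kerneldomain$.

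Next I would invoke the standard topological-dynamics fact that $\dynsysshort$ is minimal, i.e.\ every orbit is dense in $\topspace$, if and only if the only $\homeo$-invariant closed subsets of $\topspace$ are $\emptyset$ and $\topspace$. (One direction: if $x\in\topspace$ and $\orbitclosure{x}\neq\topspace$, then $\orbitclosure{x}$ is a non-trivial $\homeo$-invariant closed set. Conversely, any non-empty $\homeo$-invariant closed $S\subsetneqq\topspace$ contains a point whose orbit closure lies in $S$ and is therefore not dense.) This equivalence is textbook material and needs no re-proof here.

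Assembling the pieces: (1)$\Leftrightarrow$ $\Kerneldomain=\{\emptyset,\topspace\}$ (by the bijection of part (1) of Theorem~\ref{t:hull_kernel_setup_function_model}, together with the identification of the trivial ideals) $\Leftrightarrow$ (2) (by the dynamical-systems fact just recalled). There is no substantive obstacle; the main thing is simply to note that the bijection from Theorem~\ref{t:hull_kernel_setup_function_model}(1) has already done all the heavy lifting, and the argument reduces to matching up endpoints and invoking a standard characterisation of minimality.
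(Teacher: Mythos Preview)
Your proposal is correct and follows essentially the same idea as the paper: both arguments exploit the bijection between well behaved closed ideals of $\lone$ and $\homeo$-invariant closed subsets of $\topspace$, and then identify the trivial members on each side. The only difference is packaging: you invoke Theorem~\ref{t:hull_kernel_setup_function_model}(1) directly, whereas the paper works elementwise from the more primitive Corollary~\ref{c:characterisation_well_behaved_closed_ideals} (constructing $\Laurentseries{\kernel(\orbitclosure{x})}$ for one direction and unwinding $I=\Laurentseries{\Eone(I)}$ for the other), but the substance is identical.
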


\begin{proof}
Assume that (1) holds. For each $x\in \topspace$, $\Laurentseries{\kernel(\orbitclosure{x})}$ is a well behaved closed ideal, as a consequence of Corollary~\ref{c:characterisation_well_behaved_closed_ideals}. Since it is clearly proper, it equals $\{0\}$, and Corollary~\ref{c:characterisation_well_behaved_closed_ideals} implies that $\kernel(\orbitclosure{x})=\{0\}$, yielding that $\orbitclosure{x}=\topspace$. Conversely, if (2) holds, let $I$ be a proper well behaved closed ideal. Then, again by Corollary~\ref{c:characterisation_well_behaved_closed_ideals}, $I=\Laurentseries{\Eone(I)}$. Since $\Eone(I)$ is then a proper closed ideal of $\coeffalg$, $\hull(\Eone(I))$ is a non-empty $\homeo$-invariant subset of $\topspace$, hence equal to $\topspace$. Therefore $\Eone(I)=\kernel\hull(\Eone(I))=\{0\}$, hence $I=0$.
\end{proof}

Theorem~\ref{t:hull_kernel_setup_function_model} gives a hint as to when $\Hull$ could be injective on $\Hulldomain$: then all closed ideals must be well behaved, and in particular they will then all be self-adjoint. As is known \cite[Theorem~4.4]{DST} this can only occur if  $\dynsysshort$ is free. We will now proceed to show that freeness of $\dynsysshort$ is also sufficient for, hence equivalent with, the injectivity of $\Hull$ on $\Hulldomain$ and hence with spectral synthesis holding in this model. The following technical lemma is instrumental for this: its first two parts follow easily from \cite[Proposition~2.4]{DST} and the third and fourth part are trivial.

\begin{lemma}\label{l:killing_coefficients_elementwise_lemma} Suppose that $\dynsysshort$ is free. If $x\in\topspace$ and $N\geq 1$ are given, then there exist an open neighbourhood $U$ of $x$ and unimodular functions $\theta_1,\ldots,\theta_{4^N}\in \coeffalg$ with the following property: If $a=\sum_{n\in\Z}a_n\delta^n \in\lone$ is arbitrary, and $\frac{1}{4^N}\sum_{l=1}^{4^N}\theta_l a{\bar{\theta}}_l=\sum_{n\in\Z} a^\prime_n \delta^n$, then
\begin{enumerate}
\item $a^\prime_0=a_0$;
\item $a^\prime_n\rest{U}=0$, for $0< |n|\leq N$;
\item if $a_n(x)=0$, for some $n\in\Z$ and $x\in\topspace$, then $a_n^\prime(x)=0$;
\item $\Vert a^\prime_n\Vert\leq \Vert a_n\Vert$, for all $n\in\Z$.
\end{enumerate}
\end{lemma}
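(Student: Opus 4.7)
The plan is to reduce the four conclusions to a single structural property of the family $\{\theta_l\}$. A direct computation, using the commutation rule $\delta^n f=(f\circ\homeo^{-n})\delta^n$ for $f\in\coeffalg$, gives
\[
a^\prime_n=\frac{1}{4^N}\sum_{l=1}^{4^N}\theta_l\cdot(\bar\theta_l\circ\homeo^{-n})\cdot a_n=\phi_n\cdot a_n,
\]
where $\phi_n:=\frac{1}{4^N}\sum_l \theta_l(\bar\theta_l\circ\homeo^{-n})\in\coeffalg$ satisfies $\Vert\phi_n\Vert\leq 1$ by unimodularity of the $\theta_l$'s. Parts (1), (3), and (4) then follow at once from this display: $\phi_0=1$ identically, giving (1); the pointwise factorisation shows $a^\prime_n$ vanishes wherever $a_n$ does, giving (3); and combining the factorisation with $|\phi_n|\leq 1$ gives (4).

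All the substance therefore lies in (2), which amounts to choosing $\theta_1,\ldots,\theta_{4^N}$ and the neighbourhood $U$ of $x$ so that $\phi_n\rest{U}=0$ for every $n$ with $0<|n|\leq N$. Here freeness enters: the $2N+1$ points $\homeo^{-N}x,\ldots,x,\ldots,\homeo^N x$ are pairwise distinct, so by the Hausdorff property one can pick pairwise disjoint open neighbourhoods $U_k$ of $\homeo^k x$ for $|k|\leq N$, and then shrink $U:=U_0$ so that $\homeo^k(U)\subset U_k$ for all such $k$. The idea is then to force each $\theta_l$ to be a prescribed unimodular constant $\xi_{l,k}\in\T$ on $U_k$. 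For $y\in U$, the vanishing condition $\phi_n(y)=0$ reduces to the purely linear-algebraic relation
\[
\sum_{l=1}^{4^N}\xi_{l,0}\,\overline{\xi_{l,-n}}=0\quad(0<|n|\leq N),
\]
i.e., the $0$th column of the matrix $(\xi_{l,k})_{l,k}$ must be orthogonal in $\C^{4^N}$ to each of the other $2N$ columns.

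The main obstacle is thus twofold, and is exactly where \cite[Proposition~2.4]{DST} does the work: one must exhibit a $4^N\times(2N+1)$ matrix of unimodular scalars with the prescribed column orthogonality, and one must extend the prescribed constant values on the disjoint sets $U_k$ to genuinely unimodular continuous functions on all of $\topspace$. A Hadamard-type construction handles the matrix --- for instance, selecting $2N+1$ of the $4^N$ characters of $(\Z/4)^N$, which are pairwise orthogonal unit vectors in $\C^{4^N}$ --- and a partition-of-unity/Urysohn argument, after slightly shrinking each $U_k$, handles the extension to a unimodular element of $\coeffalg$. Invoking \cite[Proposition~2.4]{DST} produces $\theta_1,\ldots,\theta_{4^N}$ and $U$ with the desired vanishing property, and substituting them into the displayed formula for $a^\prime_n$ completes the verification of all four conclusions.
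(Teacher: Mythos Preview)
Your proposal is correct and follows the same route the paper indicates: the paper's entire proof is the remark that parts (1) and (2) follow from \cite[Proposition~2.4]{DST} while (3) and (4) are trivial, and your factorisation $a^\prime_n=\phi_n\,a_n$ makes the triviality of (1), (3), (4) transparent and reduces (2) to precisely the content of that cited proposition. The sketch you give of the Hadamard/Urysohn argument behind \cite[Proposition~2.4]{DST} is accurate, so there is no gap.
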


The previous result shows that, while staying in the same $\coeffalg$-subbimodule, one can locally annihilate any finite given set (not containing $a_0)$ of coefficients of $a$, while retaining $\Eone(a)$. Since zeroes of coefficients are preserved, and the norm of the coefficients does not increase, repeating this process a finite number of steps takes us to the global level.

\begin{lemma}\label{l:killing_coefficients_globally}
Suppose that $\dynsysshort$ is free. If $N\geq 1$ is given, then there exist finitely many unimodular functions $\theta_1,\ldots,\theta_{M}\in \coeffalg$ with the following property: If $a=\sum_{n\in\Z}a_n\delta^n \in\lone$ is arbitrary, and $\frac{1}{M}\sum_{k=1}^{M}\theta_k a{\bar{\theta}}_k=\sum_{n\in\Z} a^\prime_n \delta^n$, then
\begin{enumerate}
\item $a^\prime_0=a_0$;
\item $a^\prime_n=0$, for $0< |n|\leq N$;
\item if $a_n(x)=0$, for some $n\in\Z$ and $x\in\topspace$, then $a_n^\prime(x)=0$;
\item $\Vert a^\prime_n\Vert\leq \Vert a_n\Vert$, for all $n\in\Z$.
\end{enumerate}
\end{lemma}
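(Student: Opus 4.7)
The plan is to deduce the global statement from the local one by a finite compactness argument. Given $N\geq 1$, Lemma~\ref{l:killing_coefficients_elementwise_lemma} produces for each $x\in\topspace$ an open neighbourhood $U_x$ of $x$ and a tuple of $4^N$ unimodular functions in $\coeffalg$ that defines a linear averaging operator on $\lone$ which locally annihilates the coefficients of index $0<|n|\leq N$, while preserving $a_0$, respecting pre-existing zeros of coefficients, and not increasing coefficient norms.

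First I would extract a finite subcover $U_{x_1},\ldots,U_{x_r}$ of $\topspace$, and for each $i$ denote by $T_i\colon\lone\to\lone$ the corresponding averaging operator, $T_i(a)=4^{-N}\sum_{k=1}^{4^N}\theta_k^{(i)}\,a\,\overline{\theta_k^{(i)}}$. My candidate global operator is the composition $T=T_r\circ\cdots\circ T_1$. A short computation, exploiting that products of unimodular functions in $\coeffalg$ are again unimodular in $\coeffalg$, rewrites $T$ in the averaging form demanded by the lemma, with $M=(4^N)^r$ unimodular functions given by products $\theta_{k_r}^{(r)}\cdots\theta_{k_1}^{(1)}$; the scalar $1/M$ absorbs the corresponding product of normalising factors. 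Properties (1), (3) and (4) are then immediate, since each $T_i$ satisfies them and these three conditions are plainly preserved under composition.

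The substantive step is property (2). I would prove, by induction on $i=1,\ldots,r$, that for every $a\in\lone$ the coefficients of $T_i\circ\cdots\circ T_1(a)$ of index $0<|n|\leq N$ vanish on $U_{x_1}\cup\cdots\cup U_{x_i}$. The base case $i=1$ is property (2) of Lemma~\ref{l:killing_coefficients_elementwise_lemma}. For the inductive step, applying $T_i$ kills the relevant coefficients on $U_{x_i}$ (by property (2) of the elementwise lemma), while property (3) of that lemma propagates the vanishing already present on $U_{x_1}\cup\cdots\cup U_{x_{i-1}}$. Taking $i=r$ and using that the $U_{x_j}$ cover $\topspace$ gives $a'_n\equiv 0$ for $0<|n|\leq N$.

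The main conceptual point, to the extent there is one, is precisely this interplay between properties (2) and (3) of Lemma~\ref{l:killing_coefficients_elementwise_lemma}: the zero-preservation clause is exactly what is needed to let the successive local annihilations accumulate, rather than be overwritten by subsequent averagings. Once this is noticed, there are no analytical subtleties, since the covering is finite and the verifications reduce to the bookkeeping above.
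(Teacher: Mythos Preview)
Your proof is correct and follows essentially the same approach as the paper: both apply Lemma~\ref{l:killing_coefficients_elementwise_lemma} pointwise, pass to a finite subcover by compactness, compose the resulting averaging operators (yielding $M=(4^N)^r$ unimodular functions), and use property~(3) of the local lemma to ensure that earlier local annihilations are preserved under subsequent averagings. Your write-up is in fact slightly more explicit than the paper's, spelling out the induction for property~(2) rather than leaving it as a remark.
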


\begin{proof} With $N$ given, we apply Lemma~\ref{l:killing_coefficients_elementwise_lemma} to each $x\in\topspace$, and obtain a neighbourhood $U_x$ of $x$ and unimodular functions $\theta_{x,1},\ldots,\theta_{x,4^N}\in \coeffalg$, such that the operator $L_x:\lone\to\lone$, mapping $a=\sum_{n\in\Z}a_n\delta^n \in\lone$ to $\frac{1}{4^N}\sum_{l=1}^{4^N}\theta_{x,l} a{\bar{\theta}}_{x,l}$, has the properties (1)--(4) as stated in Lemma~\ref{l:killing_coefficients_elementwise_lemma}. By compactness of $\topspace$, there are finitely many $x_1,\ldots,x_C$ such that $\topspace=\bigcup_{i=1}^C U_{x_i}$. The operator $L_{x_1}\circ\ldots\circ L_{x_C}$ is then described by a summation involving $M=4^{NC}$ unimodular functions as stated in the present Lemma. Moreover, the invariance parts (3) and (4) of Lemma~\ref{l:killing_coefficients_elementwise_lemma}, together with the annihilation of the coefficient with index in $\{-N,\ldots,-1,1,\ldots,N\}$ on $U_{x_i}$ once $L_{x_i}$ is applied, imply part (2), (3) and (4) of the present Lemma.
\end{proof}

\begin{corollary}\label{c:free_implies_all_closed_ideals_well_behaved} Suppose that $\dynsysshort$ is free.
 \begin{enumerate}
 \item If $a\in\lone$, then $\Eone(a)\in\overline{\coeffalg\cdot a\cdot\coeffalg}$.
 \item Every closed ideal of $\lone$ is well behaved.
\end{enumerate}
\end{corollary}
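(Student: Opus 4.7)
The plan is to use Lemma~\ref{l:killing_coefficients_globally} directly: part~(1) will follow by choosing $N$ larger and larger so that the averaging procedure in that lemma leaves $E(a)=a_0$ untouched but drives all other coefficients to $0$ in $\ell^1$-norm, and part~(2) will be an immediate consequence, since the averaged element lies in the $C(\topspace)$-subbimodule generated by $a$, hence in any closed ideal containing $a$.

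More concretely, for part~(1), fix $a=\loneelement{n}\in\lone$ and, for each $N\geq 1$, apply Lemma~\ref{l:killing_coefficients_globally} to obtain unimodular functions $\theta_1^{(N)},\ldots,\theta_{M_N}^{(N)}\in\coeffalg$ such that the element
\[
a^{(N)}:=\frac{1}{M_N}\sum_{k=1}^{M_N}\theta_k^{(N)}\,a\,\overline{\theta_k^{(N)}}=\sum_{n\in\Z}a^{(N)}_n\delta^n
\]
satisfies $a^{(N)}_0=a_0$, $a^{(N)}_n=0$ for $0<|n|\leq N$, and $\Vert a^{(N)}_n\Vert\leq\Vert a_n\Vert$ for all $n$. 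The crucial point is that $a^{(N)}$ is a finite linear combination of elements of the form $f\cdot a\cdot g$ with $f,g\in\coeffalg$, so $a^{(N)}\in\coeffalg\cdot a\cdot\coeffalg$. Combining the stated properties,
\[
\Vert a^{(N)}-a_0\Vert=\sum_{|n|>N}\Vert a^{(N)}_n\Vert\leq\sum_{|n|>N}\Vert a_n\Vert,
\]
and since $a\in\lone$ the tail on the right tends to $0$ as $N\to\infty$. Hence $a^{(N)}\to a_0=\Eone(a)$ in $\lone$, which shows $\Eone(a)\in\overline{\coeffalg\cdot a\cdot\coeffalg}$.

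For part~(2), let $I$ be a closed ideal of $\lone$ and let $a\in I$. Since $I$ is a two-sided ideal, each summand $\theta_k^{(N)}\,a\,\overline{\theta_k^{(N)}}$ lies in $I$, so $a^{(N)}\in I$ for every $N$. Because $I$ is closed and $a^{(N)}\to\Eone(a)$ by part~(1), we obtain $\Eone(a)\in I$. As $a\in I$ was arbitrary, $\Eone(I)\subset I$, which is precisely the definition of $I$ being well behaved.

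The only point requiring care is verifying that the averaged element $a^{(N)}$ genuinely sits in $\coeffalg\cdot a\cdot\coeffalg$, which is immediate from the explicit form of the averaging operator produced in Lemma~\ref{l:killing_coefficients_globally}. There is no real obstacle beyond this, since the norm control in Lemma~\ref{l:killing_coefficients_globally}(4) combined with the summability $\sum_n\Vert a_n\Vert<\infty$ is exactly what is needed to pass from the ``local'' statement at each $N$ to convergence in $\lone$.
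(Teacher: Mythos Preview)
Your proof is correct and follows essentially the same approach as the paper's: both apply Lemma~\ref{l:killing_coefficients_globally} to produce an element of $\coeffalg\cdot a\cdot\coeffalg$ whose zeroth coefficient is $a_0$, whose coefficients with index $0<|n|\le N$ vanish, and whose remaining coefficients are dominated by those of $a$, then use the $\ell^1$-tail estimate to conclude. The only cosmetic difference is that the paper phrases this as an $\epsilon$-argument while you phrase it as a convergent sequence $a^{(N)}\to\Eone(a)$, and you spell out part~(2) a bit more explicitly.
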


\begin{proof}
The second part follows trivially from the first. As to that, let $a=\loneelement{n}\in\lone$ and $\epsilon>0$. Choose $N\geq 1$ such that $\sum_{|n|>N}\Vert a_n\Vert<\epsilon$ and next apply Lemma~\ref{l:killing_coefficients_globally} to find an element $a^\prime=\sum_{n\in\Z}a^\prime_n\delta^n\in \coeffalg\cdot a\cdot\coeffalg$ such that
\begin{enumerate}
\item $a^\prime_0=a_0$;
\item $a^\prime_n=0$, for $0< |n|\leq N$;
\item $\Vert a^\prime_n\Vert\leq \Vert a_n\Vert$, for all $n\in\Z$.
\end{enumerate}
Then $\Vert\Eone(a)-a^\prime\Vert=\Vert a^\prime_0-a^\prime\Vert=\sum_{|n|>N}\Vert a^\prime_n\Vert\leq \sum_{|n|>N}\Vert a_n\Vert<\epsilon$.
\end{proof}

The main theorem on spectral synthesis holding in this model is now simply a matter of putting the pieces together. Needless to say, if the equivalent statements below hold, then all parts of Theorem~\ref{t:ideal_well_behaved} apply to all (then automatically well behaved) closed ideals of $\lone$.

\begin{theorem}\label{t:all_ideals_well_behaved}
The following are equivalent:
\begin{enumerate}
\item The maps $I \rightarrow \Hull(I)$ and $S \rightarrow \Kernel(S)$ are mutually inverse bijections between the set of closed ideals of $\lone$ and the set of $\homeo$-invariant closed subsets of $\topspace$;
\item Every closed ideal of $\lone$ is well behaved;
\item Every closed ideal of $\lone$ is self-adjoint;
\item Every primitive ideal of $\lone$ is well behaved;
\item Every closed ideal of $\lone$ is the intersection of well behaved primitive ideals;
\item Every closed ideal of $\lone$ is the intersection of primitive ideals;
\item $\dynsysshort$ is free.
\end{enumerate}
In that case, if $I$ is a closed ideal of $\lone$, and $S\subset\topspace$ is such that
\[
\hull(\Eone(I))=\overline{\bigcup_{x\in S}\orbit{x}},
\]
giving the $\homeo$-invariant closed subset $\hull(\Eone(I))$ as the closure of a union of orbits, then
\[
I=\bigcap_{x\in S}\piaper{x}
\]
establishes $I$ explicitly as an intersection as in part \ulb \textup{5}\urb.
\end{theorem}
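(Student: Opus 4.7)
The plan is to organise the seven equivalences around three natural hubs: condition (2) (every closed ideal is well behaved), condition (3) (every closed ideal is self-adjoint), and condition (7) (freeness of $\dynsysshort$). Once the core cycle (2) $\Rightarrow$ (3) $\Rightarrow$ (7) $\Rightarrow$ (2) is in place, the remaining conditions (1), (4), (5), (6) will follow from it with only light additional work, and the closing explicit description will be a direct appeal to Theorem~\ref{t:ideal_well_behaved}.

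For the core cycle: (2) $\Rightarrow$ (3) is immediate from Corollary~\ref{c:characterisation_well_behaved_closed_ideals}(2), which records that all well behaved closed ideals are automatically self-adjoint. The step (3) $\Rightarrow$ (7) is, as already noted in the paragraph preceding Lemma~\ref{l:killing_coefficients_elementwise_lemma}, the content of the cited \cite[Theorem~4.4]{DST}. The closing implication (7) $\Rightarrow$ (2) is precisely Corollary~\ref{c:free_implies_all_closed_ideals_well_behaved}(2), which has just been established through the two approximation lemmas. Finally, the equivalence (1) $\Leftrightarrow$ (2) is nothing more than a restatement of part (2) of Theorem~\ref{t:hull_kernel_setup_function_model}, which identifies the failure of injectivity of $\Hull$ on $\Hulldomain$ with the existence of closed ideals that are not well behaved.

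The primitive-ideal conditions are now straightforward. Since a primitive ideal is in particular a closed ideal, (2) $\Rightarrow$ (4) is automatic. For (4) $\Rightarrow$ (7), I will argue by contradiction: if some $x \in \perpoints$ exists, then for any $\lambda \in \T$ the ideal $\piper{x}{\lambda}$ is a primitive ideal (by definition) and, by Proposition~\ref{p:behaviour_of_families}(2), is a \emph{badly} behaved proper closed ideal, which by Lemma~\ref{l:behaved_lemma}(4) cannot simultaneously be well behaved, contradicting (4); hence $\perpoints = \emptyset$. To reach (5) from the already-known equivalence of (2) and (7), I invoke part (7) of Theorem~\ref{t:ideal_well_behaved}: under freeness the set $\setofperpoints$ appearing there is empty, so each closed ideal is of the form $\bigcap_{x \in \setofaperpoints} \piaper{x}$, and these constituent ideals are well behaved primitive ideals by Proposition~\ref{p:behaviour_of_families}(1). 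Then (5) $\Rightarrow$ (6) is trivial, and (6) $\Rightarrow$ (3) follows because primitive ideals are self-adjoint (as noted just after Definition~\ref{d:primitive_ideal}) and arbitrary intersections of self-adjoint closed ideals are self-adjoint.

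The final explicit formula is then immediate: under the (now equivalent) hypotheses we have $\perpoints = \emptyset$, so taking $\setofaperpoints = S$ and $\setofperpoints = \emptyset$ in Theorem~\ref{t:ideal_well_behaved} yields $I = \bigcap_{x \in S}\piaper{x}$ whenever $\hull(\Eone(I)) = \overline{\bigcup_{x \in S}\orbit{x}}$. There is in fact no serious obstacle in this proof; all of the real work has been discharged earlier, the genuinely nontrivial input being Corollary~\ref{c:free_implies_all_closed_ideals_well_behaved} (which rests on the local-to-global annihilation argument of Lemmas~\ref{l:killing_coefficients_elementwise_lemma} and~\ref{l:killing_coefficients_globally}) and the deep self-adjointness theorem \cite[Theorem~4.4]{DST}. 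The only remaining care needed is bookkeeping, to ensure that each implication is discharged using hypotheses already proven rather than one still to come.
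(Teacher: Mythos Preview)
Your proof is correct and follows essentially the same route as the paper's own proof: the same core cycle $(2)\Rightarrow(3)\Rightarrow(7)\Rightarrow(2)$ via Corollary~\ref{c:characterisation_well_behaved_closed_ideals}, \cite[Theorem~4.4]{DST}, and Corollary~\ref{c:free_implies_all_closed_ideals_well_behaved}, the same link $(1)\Leftrightarrow(2)$ via Theorem~\ref{t:hull_kernel_setup_function_model}, and the same handling of (4), (5), (6) through Proposition~\ref{p:behaviour_of_families} and Theorem~\ref{t:ideal_well_behaved}. The only cosmetic difference is that the paper also records $(5)\Rightarrow(2)$ directly via closure of well-behavedness under intersections, whereas you close via $(5)\Rightarrow(6)\Rightarrow(3)$; both are fine.
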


\begin{proof}
The equivalence of (1) and (2) is immediate from the second part of Theorem~\ref{t:hull_kernel_setup_function_model}. It was already observed in Corollary~\ref{c:characterisation_well_behaved_closed_ideals} that, for general $\dynsysshort$, all well behaved closed ideals are self-adjoint; hence (2) implies (3). By \cite[Theorem~4.4]{DST}, (3) and (7) are equivalent. Corollary~\ref{c:free_implies_all_closed_ideals_well_behaved} shows that (7) implies (2). It is trivial that (2) implies (4). For any $x\in\perpoints$ and $\lambda\in\T$, the second part Proposition~\ref{p:behaviour_of_families} furnishes a primitive ideal $\piper{x}{\lambda}$ that is not well behaved, hence (4) implies (7). Thus the equivalence of (1), (2), (3), (4) and (7) has been established, and we turn to (5). Since arbitrary intersections of well behaved ideals are well behaved, according to Lemma~\ref{l:basic_E_properties}, (5) implies (2). If (2) holds, then (7) holds as well, and therefore the set $\setofperpoints$ of periodic points in Theorem~\ref{t:ideal_well_behaved} is necessarily empty, so that \eqref{e:explicit_intersection_of_orbitclosure_ideals} establishes every closed ideal as an intersection of well behaved primitive ideals. Hence (2) implies (5). Since obviously (5) implies (6) and (6) implies (3), the proof of the equivalence of (1) through (7) is now complete. The remaining statement is immediate from Theorem~\ref{t:ideal_well_behaved}.
\end{proof}

\begin{remark}\label{r:intersection_discussion}
Regardless of the dynamics, it is always true that every closed ideal of $\cstar$ is the intersection of a  number of the $\cstar$-counterparts of the primitive ideals $\piaper{x}\,(x\in\aperpoints)$ and $\piper{x}{\lambda}\,(x\in\perpoints,\lambda\in\T)$ \cite[Proposition~2]{T2}. Certainly every closed ideal of $\cstar$ is an intersection of primitive ideals, but that this standard family is always sufficient is remarkable. If $\topspace$ is metrizable, then it is known (cf.\ \cite{GR}, where a generalisation of the Effros-Hahn conjecture in \cite{EH} is proved) that the primitive ideals of $\cstar$ are precisely the ones in our standard family, and \cite[Proposition~2]{T2} is then obvious, but for non-metrizable $\topspace$ there seems to be no a priori guarantee for this to hold.

One might hope that each closed ideal of $\lone$ is the intersection of primitive ideals. However, such an intersection will always be self-adjoint, and hence a necessary condition for this is that $\dynsysshort$ should be free. As Theorem~\ref{t:all_ideals_well_behaved} shows, this condition is also sufficient, and in that case the primitive ideals $\piaper{x}\,(x\in\aperpoints)$ of $\lone$ are already sufficiently many. If $\topspace$ is metrizable, then \cite{GR} implies again that this is the complete set of primitive ideals of $\lone$, but for non-metrizable $\topspace$ this may no longer be true.
\end{remark}

\section{Spectral synthesis: $\loneZ$-model}\label{sec:l_one_model}

In this section we study spectral synthesis in a model analogous to the Fourier transform for $\loneZ$. Compared with the previous section, the roles of the operators $\Hull$ and $\Kernel$ are now taken over by the operators $\Zeroes$ and $\Ideal$, respectively, which are the analogues of the usual hull and kernel operators, respectively, for $L^1(G)$.

To start with, we define the injective contraction $\Fourier:\lone\to C(\XtimesT)$ as
\[
\Fourier(a)(x,\lambda)=\sum_n \lambda^n a_n(x)\quad(a=\loneelement{n}\in\lone,x\in\topspace,\lambda\in\T).
\]
It collects the Fourier transforms of all maps $n\to a_n(x)$, for $x\in\topspace$, and when $\topspace$ consists of one point it is the usual Fourier transform. The following properties are routine to verify.

\begin{lemma}\label{l:basic_Fourier_properties}Let $a=\loneelement{n}\in\lone$, $(x,\lambda)\in\XtimesT$, $k\in\Z$, and $f\in\coeffalg$. Then:
\begin{enumerate}
\item $\Fourier(1)=1$;
\item $\Fourier(a\cdot\delta^k)(x,\lambda)=\lambda^k\Fourier(a)(x,\lambda)$;
\item $\Fourier(\delta^k\cdot a)(x,\lambda)=\lambda^k\Fourier(a)(\homeo^{-k}x,\lambda)$;
\item $\Fourier(f\cdot a)(x,\lambda)=f(x)\Fourier(a)(x,\lambda)$;
\item $\Fourier(a\cdot f)(x,\lambda)=\sum_n \lambda^n a_n(x) f(\homeo^{-n}x)$;
\item $\Fourier(a^\ast)(x,\lambda)=\overline{\sum_n \lambda^n a_n(\homeo^n x)}$.
\end{enumerate}
\end{lemma}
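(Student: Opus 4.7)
The plan is to verify each of the six identities directly from the three ingredients already in play: the definition of $\Fourier$, the twisted convolution
\[
(bc)(n)=\sum_{j\in\Z}b(j)\cdot\alpha^j(c(n-j)),
\]
and the involution $b^*(n)=\overline{\alpha^n(b(-n))}$. Since $a=\loneelement{n}$ is absolutely summable and $\Fourier$ is contractive into $C(\XtimesT)$, every rearrangement or shift of summation index used below is trivially legitimate, so I will freely swap sums and relabel.

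For (1), the identity element of $\lone$ is $\delta^0=\chi_{\{0\}}$, so only its $n=0$ coefficient is non-zero and equals the constant function $1$; plugging into the definition of $\Fourier$ gives $\Fourier(1)\equiv 1$. For (2) and (3), I would compute the coefficients of $a\delta^k$ and $\delta^k a$ from the convolution formula: the indicator character of $\delta^k$ forces the sum to collapse, yielding $(a\delta^k)_n=a_{n-k}$ and $(\delta^k a)_n=\alpha^k(a_{n-k})=a_{n-k}\circ\homeo^{-k}$. Inserting these into $\Fourier$ and shifting the summation variable by $k$ immediately produces the factor $\lambda^k$; in the second case, the composition with $\homeo^{-k}$ survives and becomes the $x\mapsto\homeo^{-k}x$ in the argument, as claimed.

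For (4) and (5), I would view $f\in\coeffalg$ as the element $f\delta^0$ of $\lone$ and again collapse the convolution: $(fa)_n=f\cdot a_n$ and $(af)_n=a_n\cdot\alpha^n(f)=a_n\cdot(f\circ\homeo^{-n})$. Substituting into the definition of $\Fourier$ yields (4) directly, since $f(x)$ can be pulled out of the sum over $n$, and gives (5) as stated, without any further simplification being possible because of the $n$-dependent shift in the argument of $f$.

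For (6), I would start from $a^*(n)=\overline{\alpha^n(a(-n))}=\overline{a_{-n}\circ\homeo^{-n}}$ and insert this into $\Fourier$, obtaining $\Fourier(a^*)(x,\lambda)=\sum_n\lambda^n\overline{a_{-n}(\homeo^{-n}x)}$. Using $\lambda\in\T$, so that $\overline{\lambda^n}=\lambda^{-n}$, I would pull the complex conjugation outside the sum and then substitute $m=-n$ to obtain the stated formula. The only place that demands a moment of care is precisely this use of $|\lambda|=1$ in part (6); everything else is a direct unwinding of definitions and a relabeling of indices, and I do not anticipate any genuine obstacle.
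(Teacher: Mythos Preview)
Your proposal is correct and is exactly the direct verification the paper has in mind; the paper itself omits the proof entirely, simply remarking that the properties are routine to verify. Your computations of the convolution coefficients and the index shifts are all accurate, including the use of $|\lambda|=1$ in part~(6).
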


When $L$ is a linear subspace of $\lone$, let
\[
\Zeroes(L)=\{(x,\lambda)\in\XtimesT : \Fourier(a)(x,\lambda)=0\textup{ for all }a\in L\}
\]
be the possibly empty set of common zeroes of all transforms of elements of $L$. If $\topspace$ consists of one point, and $I$ is an ideal of $\loneZ$, then $\Zeroes(I)$ is the usual hull of $I$.

The following is readily established, using Lemma~\ref{l:basic_Fourier_properties} for the final statement.

\begin{lemma}\label{l:basic_Zeroes_properties} Let $L$ be a linear subspace of $\lone$. Then:
\begin{enumerate}
\item $\Zeroes(L)$ is a closed subset of $\XtimesT$;
\item $\Zeroes(L)=\XtimesT$ if and only if $L=\{0\}$, and $Z(L)=\emptyset$ if $L=\lone$.
\item If $L^\prime$ is a linear subspace of $\lone$, and $L^\prime\subset L$, then $\Zeroes(L^\prime)\supset\Zeroes(L)$;
\item $\Zeroes(\bar L)=\Zeroes(L)$;
\item $\Zeroes(L)\supset\Hull(L)\times \T$;
\item If $L\subset\coeffalg$, then $\Zeroes(I)=\hull(I)\times \T$.
\end{enumerate}
If $\{L_\alpha : \alpha\in A\}$ is a collection of linear subspaces of $\lone$, then:
\begin{enumerate}
\item[(7)] $\Zeroes(\sum_{\alpha\in A} L_\alpha)=\bigcap_{\alpha\in A}\Zeroes(L_\alpha)$;
\item[(8)] $\Zeroes(\bigcap_{\alpha\in A} L_\alpha)\supset\bigcup_{\alpha\in A}\Zeroes(L_\alpha)$.
\end{enumerate}
If $I$ is an ideal of $\lone$, then $\Zeroes(I)$ is a $\prodhomeo$-invariant closed subset of $\XtimesT$.
\end{lemma}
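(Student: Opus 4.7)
The plan is to observe that closedness of $\Zeroes(I)$ has already been recorded in part~(1) of the lemma, so only the $\prodhomeo$-invariance requires attention. Recall $\prodhomeo = \homeo\times\idmap_\T$, so the goal is to show that whenever $(x,\lambda)\in\Zeroes(I)$, both $(\homeo x,\lambda)$ and $(\homeo^{-1} x,\lambda)$ lie in $\Zeroes(I)$ as well.

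The key idea is that since $I$ is a two-sided ideal, conjugation by $\delta$ keeps elements inside $I$, and the formulas in Lemma~\ref{l:basic_Fourier_properties} translate such a conjugation into a shift of the $\topspace$-variable of $\Fourier$. Concretely, for $a\in I$, combining parts (2) and (3) of Lemma~\ref{l:basic_Fourier_properties},
\[
\Fourier(\delta^{-1}\cdot a\cdot\delta)(x,\lambda)=\lambda\,\Fourier(\delta^{-1}\cdot a)(x,\lambda)=\lambda\cdot\lambda^{-1}\Fourier(a)(\homeo x,\lambda)=\Fourier(a)(\homeo x,\lambda),
\]
for all $(x,\lambda)\in\XtimesT$. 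Since $\delta^{-1} a\delta\in I$, evaluating at a given $(x,\lambda)\in\Zeroes(I)$ forces $\Fourier(a)(\homeo x,\lambda)=0$. As $a\in I$ was arbitrary, $(\homeo x,\lambda)\in\Zeroes(I)$. A symmetric argument, using $\delta\cdot a\cdot\delta^{-1}\in I$, shows that $(\homeo^{-1}x,\lambda)\in\Zeroes(I)$, so $\Zeroes(I)$ is invariant under $\prodhomeo$ and $\prodhomeo^{-1}$.

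There is no real obstacle here; the only point worth double-checking is the bookkeeping of the $\lambda$-factors when composing the left and right multiplications by $\delta^{\pm1}$, which cancel precisely because the shifts in $\Fourier$ coming from $\delta$ and $\delta^{-1}$ produce reciprocal powers of $\lambda$. Combined with part~(1), this yields the final claim.
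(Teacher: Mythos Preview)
Your argument is correct and follows exactly the route the paper indicates: the paper states that the lemma is ``readily established, using Lemma~\ref{l:basic_Fourier_properties} for the final statement,'' and you have carried out precisely that verification for the $\prodhomeo$-invariance. A minor simplification: you do not need full conjugation---left multiplication alone suffices, since part~(3) of Lemma~\ref{l:basic_Fourier_properties} gives $\Fourier(\delta^{\pm1}\cdot a)(x,\lambda)=\lambda^{\pm1}\Fourier(a)(\homeo^{\mp1}x,\lambda)$ and $\delta^{\pm1}a\in I$ already when $I$ is a left ideal; but this is a cosmetic point and your computation is fine as written.
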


\begin{remark}\label{r:Z_I_ast_Z_I}\quad
\begin{enumerate}
\item Note that it is not asserted, not even for a closed ideal $I$, that $\Zeroes(I)=\emptyset$ if \emph{and only if} $I=\lone$.  The question whether $\Zeroes(I)\neq\emptyset$ for every proper closed ideal of $\lone$ touches upon one of the basic issues concerning the relation between $\lone$ and $\cstar$; see Proposition~\ref{p:C_star_closure_proper}.
\item It is not obvious at this point that $\Zeroes(I)=\Zeroes(I^\ast)$, for each closed ideal $I$ of $\lone$. This will be established later in Corollary~\ref{c:Zeroes_I_ast_Zeroes_I}.
\end{enumerate}
\end{remark}

The candidate inverse $\Ideal$ for $\Zeroes$ is defined in two steps. If $S\subset\XtimesT$, define
\[
\Idealtilde(S)=\{a\in\lone : \Fourier(a)(x,\lambda)=0\textup{ for all }(x,\lambda)\in S\},
\]
and
\[
\Ideal(S)=\{a\in\lone : a\cdot f\in\Idealtilde(S)\textup{ for all }f\in\coeffalg\}.
\]
If $X$ consists of one point, then $\Idealtilde(S)$ and $\Ideal(S)$ coincide and are the usual kernel of $S\subset\T$.

We have the following elementary properties, which are routine to verify, using the injectivity of the Fourier transform on $\loneZ$ for part (6).

\begin{lemma}\label{l:basic_Ideal_properties}Let $S\subset\XtimesT$. Then:
\begin{enumerate}
\item $\Idealtilde(S)$ is a closed linear subspace of $\lone$;

    \item $\Idealtilde(S)=\lone$ if and only if $S=\emptyset$;
    \item If $\bar S=\XtimesT$, then $\Idealtilde(S)=\{0\}$;
\item If $S^\prime\subset S$, then $\Idealtilde(S^\prime)\supset\Idealtilde(S)$;
\item $\Idealtilde(\bar S)=\Idealtilde(S)$;
\item $\Idealtilde(A\times\T)=\Laurentseries{\kernel(A)}$, for all $A\subset\topspace$;
\item $\Ideal(S)\subset\Idealtilde(S)$;

\item $\Ideal(S)=\lone$ if and only if $S=\emptyset$;
\item If $\bar S=\XtimesT$, then $\Ideal(S)=\{0\}$;
\item If $S^\prime\subset S$, then $\Ideal(S^\prime)\supset\Ideal(S)$;
\item $\Ideal(\bar S)=\Ideal(S)$;
\item $\Ideal(A\times\T)\subset\Laurentseries{\kernel(A)}$, for all $A\subset\topspace$.
\end{enumerate}
If $S\subset\XtimesT$ is $\prodhomeo$-invariant, then:
\begin{enumerate}
\item[(13)] $\Idealtilde(S)$ is a closed subspace of $\lone$, which is invariant under the left action of $\coeffalg$, and under left and right multiplication with $\delta^k$, for $k\in\Z$.
\item[(14)] $\Ideal(S)$ is a closed ideal of $\lone$;
\item[(15)] $\Ideal(S)=\Idealtilde(S)$ if and only if $\Idealtilde(S)$ is a closed ideal of $\lone$.
\end{enumerate}
If $\{S_\alpha : \alpha\in A\}$ is a collection of subsets of $\XtimesT$, then:
\begin{enumerate}
\item[(16)] $\Idealtilde(\bigcup_{\alpha\in A}S_\alpha)=\bigcap_{\alpha\in A}\Idealtilde(S_\alpha)$;
\item[(17)] $\Idealtilde(\bigcap_{\alpha\in A}S_\alpha)\supset \sum_{\alpha\in A}\Idealtilde(S_\alpha)$;
\item[(18)] $\Ideal(\bigcup_{\alpha\in A}S_\alpha)=\bigcap_{\alpha\in A}\Ideal(S_\alpha)$;
\item[(19)] $\Ideal(\bigcap_{\alpha\in A}S_\alpha)\supset \sum_{\alpha\in A}\Ideal(S_\alpha)$.
\end{enumerate}
\end{lemma}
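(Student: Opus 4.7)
The plan is to verify each of the nineteen assertions by direct computation from the definitions, using only the properties of $\Fourier$ collected in Lemma~\ref{l:basic_Fourier_properties}. Many parts are pure bookkeeping. Continuity of $(x,\lambda)\mapsto\Fourier(a)(x,\lambda)$ for each fixed $a$ gives (1), (5), and (11); the identity $\Fourier(1)=1$ combined with injectivity of $\Fourier$ on $\lone$ handles (2), (3), (8), and (9); (4) and (10) are monotonicity; (7) is immediate on taking $f=1\in\coeffalg$, and (12) follows from (7) combined with (6). Among the collection-level statements, (16) and (18) come from swapping quantifiers in the definitions; (17) needs only the inclusion $\bigcap_\alpha S_\alpha\subset S_\beta$ for each $\beta$, and (19) then follows from (17) applied to each $af$ with $f\in\coeffalg$.

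The only identity that is not purely formal is (6). For it I would fix $x\in A$ and observe that for any $a=\loneelement{n}\in\lone$ the sequence $(a_n(x))_{n\in\Z}$ lies in $\loneZ$, with usual Fourier transform $\lambda\mapsto\sum_n\lambda^n a_n(x)=\Fourier(a)(x,\lambda)$. If $a\in\Idealtilde(A\times\T)$, this transform vanishes identically on $\T$, and injectivity of the Fourier transform on $\loneZ$ forces $a_n(x)=0$ for every $n\in\Z$. Letting $x$ range over $A$ yields $a\in\Laurentseries{\kernel(A)}$; the reverse inclusion is immediate.

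For parts (13)--(15) the $\prodhomeo$-invariance of $S$ is essential and I would exploit Lemma~\ref{l:basic_Fourier_properties}(2)--(4). Right multiplication by $\delta^k$ scales $\Fourier(a)$ by the nowhere-zero factor $\lambda^k$, and left multiplication by $f\in\coeffalg$ scales it by $f(x)$, so both preserve vanishing loci; left multiplication by $\delta^k$ transports the zero locus by $\homeo^{-k}$ in the first coordinate, and this preserves $S$ precisely because $S$ is $\prodhomeo$-invariant. This gives (13). For (14), the closedness of $\Ideal(S)$ is clear from the presentation $\Ideal(S)=\bigcap_{f\in\coeffalg}\{a\in\lone:af\in\Idealtilde(S)\}$ as an intersection of closed sets, and to show it is a two-sided ideal I would check invariance under left and right multiplication by elements of $\coeffalg$ and by $\delta^{\pm 1}$, using (13) and the commutation relation $\delta f\delta^{-1}=\alpha(f)$, then promote this to arbitrary elements of $\lone$ by density of $\fs$ and continuity of multiplication. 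Part (15) is then immediate: the nontrivial inclusion $\Idealtilde(S)\subset\Ideal(S)$ amounts to saying $\Idealtilde(S)$ is a right $\coeffalg$-module, which combined with (13) forces $\Idealtilde(S)$ to be a two-sided ideal, and the converse is trivial. The main care is required in (13) (tracking how left multiplication by $\delta^k$ interacts with $\prodhomeo$-invariance) and in (14) (handling $\delta f=\alpha(f)\delta$ while working with the condition $af\in\Idealtilde(S)$); all other steps are mechanical.
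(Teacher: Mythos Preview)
Your proposal is correct and matches the paper's approach: the paper declares the lemma ``routine to verify, using the injectivity of the Fourier transform on $\loneZ$ for part (6)'' and gives no further details, and your expanded argument follows exactly the lines one would expect. Your identification of (6) as the only nonformal step, handled via injectivity of the Fourier transform on $\loneZ$, coincides precisely with what the paper singles out.
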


\begin{lemma}\label{l:Zeroes_Ideal_composition}\quad
\begin{enumerate}
\item If $L$ is a linear subspace of $\lone$, then $\Idealtilde\Zeroes(L)\supset L$.
\item If $S\subset\XtimesT$, then $\Zeroes\Ideal(S)\supset\Zeroes\Idealtilde(S)\supset S$;
\item If $I$ is an ideal of $\lone$, then $\Ideal\Zeroes(I)\supset I$;
\end{enumerate}
\end{lemma}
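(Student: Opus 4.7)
The three containments are Galois-connection-type statements, and each should follow very directly from unwinding the definitions of $\Fourier$, $\Zeroes$, $\Idealtilde$, and $\Ideal$, together with the monotonicity properties already recorded in Lemma~\ref{l:basic_Zeroes_properties} and Lemma~\ref{l:basic_Ideal_properties}. I expect the whole proof to be short and essentially formal; there is no hard step.

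For part (1), the plan is to take an arbitrary $a\in L$ and check the defining condition for membership in $\Idealtilde\Zeroes(L)$, namely that $\Fourier(a)(x,\lambda)=0$ for every $(x,\lambda)\in\Zeroes(L)$. But this is immediate: by the very definition of $\Zeroes(L)$, every element of $L$—and in particular $a$—has $\Fourier$-transform vanishing at every point of $\Zeroes(L)$.

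For part (2), I would first observe that the inclusion $\Zeroes\Ideal(S)\supset\Zeroes\Idealtilde(S)$ follows by applying the decreasing map $\Zeroes$ (Lemma~\ref{l:basic_Zeroes_properties}(3)) to the inclusion $\Ideal(S)\subset\Idealtilde(S)$ from Lemma~\ref{l:basic_Ideal_properties}(7). For the second inclusion, pick $(x,\lambda)\in S$; then for any $a\in\Idealtilde(S)$ one has $\Fourier(a)(x,\lambda)=0$ by definition of $\Idealtilde(S)$, so $(x,\lambda)\in\Zeroes\Idealtilde(S)$.

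For part (3), which is the only place the ideal hypothesis is used, I would take $a\in I$ and verify the defining condition for $a\in\Ideal\Zeroes(I)$, namely that $a\cdot f\in\Idealtilde\Zeroes(I)$ for every $f\in\coeffalg$. Since $I$ is an ideal and $f\in\coeffalg\subset\lone$, we have $a\cdot f\in I$, and then part~(1) applied to $L=I$ gives $a\cdot f\in\Idealtilde\Zeroes(I)$, as required. The only subtlety worth flagging is the role of the ideal hypothesis in part~(3): without it one would only get $a\in\Idealtilde\Zeroes(I)$ (from part~(1)), which is weaker than $a\in\Ideal\Zeroes(I)$ because $\Ideal$ was defined by imposing vanishing of the transform not just of $a$ but of all right multiples $a\cdot f$.
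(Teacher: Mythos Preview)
Your proposal is correct and follows essentially the same approach as the paper's own proof: both treat part~(1) and the inclusion $\Zeroes\Idealtilde(S)\supset S$ as immediate from the definitions, obtain the remaining inclusion in~(2) from $\Ideal(S)\subset\Idealtilde(S)$ together with the fact that $\Zeroes$ is decreasing, and prove~(3) by using the ideal hypothesis to get $I\cdot\coeffalg\subset I$ and then invoking part~(1). The paper's version is slightly terser (writing $I\cdot\coeffalg=I$ rather than arguing elementwise), but the substance is identical.
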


\begin{proof}
The first part is obvious, and so is the inclusion $\Zeroes\Idealtilde(S)\supset S$. Since $\Ideal(S)\subset\Idealtilde(S)$, part (2) follows. As to the third part, we know that $I\subset\Idealtilde(\Zeroes(I))$. Since $I$ is an ideal, $I\cdot\coeffalg=I\subset \Idealtilde(\Zeroes(I))$, so that $I\subset\Ideal(\Zeroes(I))$.
\end{proof}

Part (3) of Lemma~\ref{l:basic_Zeroes_properties}, part (10) of Lemma~\ref{l:basic_Ideal_properties}, and part (2) and (3) of Lemma~\ref{l:Zeroes_Ideal_composition} together imply that we are in the context of Appendix~\ref{sec:appendix}, when we let $\Zeroes$ assign a $\prodhomeo$-invariant subset of $\XtimesT$ to an ideal of $\lone$, with $\Ideal$ going in the opposite direction. Consequently,
\[
\Zeroes\Ideal\Zeroes(I)=\Zeroes(I),
\]
for each ideal $I$ of $\lone$, and
\[
\Ideal\Zeroes\Ideal(S)=\Ideal(S),
\]
for each $\prodhomeo$-invariant subset $S$ of $\XtimesT$. Of course, one can restrict the domain of $\Zeroes$ to the closed ideals of $\lone$, and that of $\Ideal$ to the closed $\prodhomeo$-invariant subsets of $\XtimesT$; that is also a context for Appendix~\ref{sec:appendix}.

Before invoking the results of Appendix~\ref{sec:appendix} we need to collect more material, such as a description of the fixed points of $\Ideal\Zeroes$ in the set of closed ideals of $\lone$. The first step is to consider well behaved ideals.

\begin{proposition}\label{p:zeroes_of_well_behaved_ideal}
If $I$ is a well behaved ideal of $\lone$, then $\Zeroes(I)=\Hull(I)\times\T$. The map $\Zeroes$ and $\Ideal$ are mutually inverse bijections between the set of all well behaved closed ideals of $\lone$ on the one hand, and the subset $\{A\times\T : A\subset\topspace \textup{ closed and $\homeo$-invariant}\}$ of the set of all $\prodhomeo$-invariant closed subsets of $\XtimesT$ on the other hand.
\end{proposition}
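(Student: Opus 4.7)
The plan is to first establish the identity $\Zeroes(I)=\Hull(I)\times\T$ for well behaved $I$, then leverage the already-established bijection from Theorem~\ref{t:hull_kernel_setup_function_model}(1) between well behaved closed ideals and $\homeo$-invariant closed subsets of $\topspace$.

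For the first statement, the inclusion $\Hull(I)\times\T\subset\Zeroes(I)$ is automatic (and already noted as part (5) of Lemma~\ref{l:basic_Zeroes_properties}): if $x\in\Hull(I)$, then $a_n(x)=0$ for every $n\in\Z$ and every $a=\loneelement{n}\in I$, so $\Fourier(a)(x,\lambda)=\sum_n \lambda^n a_n(x)=0$ for all $\lambda\in\T$. The reverse inclusion is where the well behavedness of $I$ is used. If $x\notin\Hull(I)=\hull(\Eone(I))$ by Lemma~\ref{l:basic_Hull_properties}(8), pick $f\in\Eone(I)$ with $f(x)\neq 0$. Since $I$ is well behaved we have $\Eone(I)\subset I$, so $f=f\delta^0\in I$; then $\Fourier(f)(x,\lambda)=f(x)\neq 0$ for every $\lambda\in\T$, showing $(x,\lambda)\notin\Zeroes(I)$.

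For the bijection statement, combine the first part with Theorem~\ref{t:hull_kernel_setup_function_model}(1). That theorem tells us that $\Hull$ is a bijection between well behaved closed ideals of $\lone$ and $\homeo$-invariant closed subsets of $\topspace$, with inverse $\Kernel$. The first part then shows $\Zeroes$ carries a well behaved closed $I$ to $\Hull(I)\times\T$, so its image lies in $\{A\times\T : A\subset\topspace\text{ closed and $\homeo$-invariant}\}$. I will then identify $\Ideal(A\times\T)$ with $\Kernel(A)=\Laurentseries{\kernel(A)}$ for any closed $\homeo$-invariant $A\subset\topspace$. The inclusion $\Ideal(A\times\T)\subset\Laurentseries{\kernel(A)}$ is Lemma~\ref{l:basic_Ideal_properties}(12). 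For the reverse inclusion, take $a\in\Laurentseries{\kernel(A)}$ and $f\in\coeffalg$; by Lemma~\ref{l:basic_Fourier_properties}(5),
\[
\Fourier(a\cdot f)(x,\lambda)=\sum_n \lambda^n a_n(x)f(\homeo^{-n}x),
\]
and for $x\in A$ every $a_n(x)=0$, so $\Fourier(a\cdot f)$ vanishes on $A\times\T$, i.e., $a\cdot f\in\Idealtilde(A\times\T)$ for all $f$, which means $a\in\Ideal(A\times\T)$.

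With $\Ideal(A\times\T)=\Kernel(A)$ in hand, the bijection follows immediately: for well behaved closed $I$, $\Ideal\Zeroes(I)=\Ideal(\Hull(I)\times\T)=\Kernel\Hull(I)=I$ by Corollary~\ref{c:fixed_Kernel_Hull_is_well_behaved}; conversely, for closed $\homeo$-invariant $A$, $\Zeroes\Ideal(A\times\T)=\Zeroes(\Kernel(A))=\Hull(\Kernel(A))\times\T=A\times\T$ by the first part of the proof and Lemma~\ref{l:Hull_Kernel_composition}(1). The main (minor) obstacle is the verification $\Kernel(A)\subset\Ideal(A\times\T)$, which requires one to twist by $f\in\coeffalg$ and apply the correct form of the Fourier transform from Lemma~\ref{l:basic_Fourier_properties}(5); however, the $\homeo$-invariance of $A$ is not even needed at this step, since the vanishing of $a_n$ on $A$ alone already annihilates each term regardless of where $\homeo^{-n}x$ lies.
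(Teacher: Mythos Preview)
Your proof is correct and follows essentially the same route as the paper's. The only minor variation is in establishing $\Ideal(A\times\T)=\Laurentseries{\kernel(A)}$: the paper invokes Lemma~\ref{l:basic_Ideal_properties}(6) and (15) (observing that $\Idealtilde(A\times\T)=\Laurentseries{\kernel(A)}$ is already an ideal when $A$ is $\homeo$-invariant, so $\Ideal=\Idealtilde$ there), whereas you verify the inclusion $\Laurentseries{\kernel(A)}\subset\Ideal(A\times\T)$ directly via Lemma~\ref{l:basic_Fourier_properties}(5), which as you note does not even require $\homeo$-invariance of $A$.
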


\begin{proof}
Lemma~\ref{l:basic_Zeroes_properties} shows that $\Zeroes(I)\supset\Hull(I)\times\T$. On the other hand, since $\Eone(I)\subset I$, it is immediate that $\Zeroes(I)\subset\Zeroes(\Eone(I))=\hull(\Eone(I))\times\T=\Hull(I)\times\T$. Hence $\Zeroes(I)=\Hull(I)\times\T$.
Since $\Hull$ is injective on the set of well behaved closed ideals of $\lone$, so is $\Zeroes$. For surjectivity, assume that $A\subset\topspace$ is closed and $\homeo$-invariant. Lemma~\ref{l:basic_Ideal_properties} shows that $\Idealtilde(A\times\T)=\Laurentseries{\kernel(A)}$, but, since $A$ is $\homeo$-invariant, this is already an ideal, so $\Ideal(A\times\T)=\Idealtilde(A\times\T)=\Laurentseries{\kernel(A)}$ is a well behaved closed ideal, and clearly $\Hull(\Ideal(A\times\T))=\Hull(\Laurentseries{\kernel(A)})=\hull\kernel(A)=A$. From what we have already seen, we conclude that $\Zeroes(\Ideal(A\times T))=[\Hull(\Ideal(A\times\T))]\times\T=A\times\T$. Hence $\Zeroes$ is surjective, and $\Zeroes$ and $\Ideal$ are mutually inverse bijections between these restricted domains.
\end{proof}

Hence the well behaved ideals are fixed under $\Ideal\Zeroes$ but, quite contrary to Section~\ref{sec:function_space_model}, there are others. In order to obtain a full description of these fixed points, we investigate our three standard families of ideals.

We start with the well behaved (self-adjoint) closed ideals $\piaper{x}$, for $x\in\aperpoints$. The following is immediate from Proposition~\ref{p:zeroes_of_well_behaved_ideal} and (for part (3))  Lemma~\ref{l:order_inflection}.

\begin{corollary}\label{c:I_and_Z_for_P_x} Let $x\in\aperpoints$. Then:
\begin{enumerate}
\item $\Zeroes(\piaper{x})=\left[\,\orbitclosure{x}\,\right]\times\T$;
    \item $\Ideal(\left[\,\orbitclosure{x}\,\right]\times\T)=\piaper{x}$;
\item If $I$ is an ideal of $\lone$, then $I\subset\piaper{x}$ if and only if $\Zeroes(I)\supset\Zeroes(\piaper{x})$.
\end{enumerate}
\end{corollary}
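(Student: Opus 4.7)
The plan is to derive all three statements from the machinery already in place, primarily Proposition~\ref{p:zeroes_of_well_behaved_ideal} together with the observation that $\piaper{x}$ is well behaved when $x\in\aperpoints$.

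First I would establish part (1). By part (1) of Proposition~\ref{p:behaviour_of_families}, $\piaper{x}=\Laurentseries{\kernel(\orbitclosure{x})}$ is a well behaved closed ideal, and consequently $\Eone(\piaper{x})=\kernel(\orbitclosure{x})$. Part (8) of Lemma~\ref{l:basic_Hull_properties} then gives
\[
\Hull(\piaper{x})=\hull(\Eone(\piaper{x}))=\hull\kernel(\orbitclosure{x})=\orbitclosure{x}.
\]
Applying Proposition~\ref{p:zeroes_of_well_behaved_ideal} to the well behaved closed ideal $\piaper{x}$, we get $\Zeroes(\piaper{x})=\Hull(\piaper{x})\times\T=\orbitclosure{x}\times\T$, proving (1).

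For part (2), note that $\orbitclosure{x}$ is a closed $\homeo$-invariant subset of $\topspace$, so $\orbitclosure{x}\times\T$ lies in the range of the bijection described in Proposition~\ref{p:zeroes_of_well_behaved_ideal}. Since part (1) identifies $\piaper{x}$ as the (unique) well behaved closed ideal whose image under $\Zeroes$ is $\orbitclosure{x}\times\T$, the inverse bijection $\Ideal$ sends $\orbitclosure{x}\times\T$ back to $\piaper{x}$.

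Part (3) is then a formality using the abstract framework of the appendix. Since $\Zeroes$ and $\Ideal$ form a Galois-type correspondence with $\Ideal\circ\Zeroes\supset\idmap$ on ideals and $\Zeroes\circ\Ideal\supset\idmap$ on $\prodhomeo$-invariant subsets (as noted in the discussion preceding Proposition~\ref{p:zeroes_of_well_behaved_ideal}), and since part (2) shows that $\piaper{x}$ is a fixed point of $\Ideal\circ\Zeroes$, the equivalence asserted in (3) is exactly Lemma~\ref{l:order_inflection} applied to this pair of operators at the fixed point $\piaper{x}$.

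I do not anticipate a real obstacle here; the only slightly non-trivial input is the well-behavedness of $\piaper{x}$ for $x\in\aperpoints$, which is precisely Proposition~\ref{p:behaviour_of_families}(1), and the abstract order-inflection fact from the appendix. Everything else is a direct translation along the bijection supplied by Proposition~\ref{p:zeroes_of_well_behaved_ideal}.
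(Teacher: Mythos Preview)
Your proposal is correct and follows essentially the same approach as the paper, which simply states that the corollary is immediate from Proposition~\ref{p:zeroes_of_well_behaved_ideal} and (for part~(3)) Lemma~\ref{l:order_inflection}. You have merely unpacked those references: computing $\Hull(\piaper{x})=\orbitclosure{x}$ explicitly via Lemma~\ref{l:basic_Hull_properties}(8), reading off parts~(1) and~(2) from the bijection in Proposition~\ref{p:zeroes_of_well_behaved_ideal}, and invoking Lemma~\ref{l:order_inflection} at the fixed point $\piaper{x}$ for part~(3).
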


Actually, part (3) can be improved quite a bit, which will be instrumental in the proof of the key Proposition~\ref{p:key_pre_image_proposition}.

\begin{proposition}\label{p:included_in_P_x} Let $x\in\aperpoints$, and let $I$ be an ideal of $\lone$. Then the following are equivalent:
\begin{enumerate}
\item $\Zeroes(I)\cap[\,\orbit{x}\,]\times \T\neq\emptyset$;
\item $\Zeroes(I)\cap \{x\}\times\T\neq\emptyset$;
\item $\Zeroes(I)\supset \{x\}\times\T$;
\item $\Zeroes(I)\supset\left[\,\orbitclosure{x}\,\right]\times\T$;
\item $I\subset\piaper{x}$.
\end{enumerate}
\end{proposition}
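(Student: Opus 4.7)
My plan is the following. Note first that $(4)\Rightarrow(3)\Rightarrow(2)\Rightarrow(1)$ are immediate from set inclusions; that $(4)\Leftrightarrow(5)$ is essentially Corollary~\ref{c:I_and_Z_for_P_x}, which already identifies $\Zeroes(\piaper{x})=[\,\orbitclosure{x}\,]\times\T$ and characterizes the inclusion $I\subset\piaper{x}$ in terms of $\Zeroes$; and that $(1)\Rightarrow(2)$ is immediate from the $\prodhomeo$-invariance of $\Zeroes(I)$ recorded in the last sentence of Lemma~\ref{l:basic_Zeroes_properties}, since a point $(\homeo^k x,\lambda)\in\Zeroes(I)$ can be pulled back by $(\homeo\times\idmap_\T)^{-k}$ to $(x,\lambda)\in\Zeroes(I)$. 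The only substantive step is $(2)\Rightarrow(5)$.

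Assume $(x,\lambda_0)\in\Zeroes(I)$. By part~(1) of Proposition~\ref{p:belonging_to_kernel}, to conclude $I\subset\piaper{x}$ it suffices to show that $a_m(y)=0$ for every $a=\loneelement{n}\in I$, every $m\in\Z$, and every $y\in\orbit{x}$. Writing $y=\homeo^{k_0}x$, the $\prodhomeo$-invariance of $\Zeroes(I)$ yields $(y,\lambda_0)\in\Zeroes(I)$. Since $I$ is a two-sided ideal, $a\cdot f\in I$ for every $f\in\coeffalg$, and part~(5) of Lemma~\ref{l:basic_Fourier_properties} then gives
\[
0=\Fourier(a\cdot f)(y,\lambda_0)=\sumn \lambda_0^n a_n(y) f(\homeo^{-n}y).
\]
The task is to exploit the freedom in $f$ to extract the single term $\lambda_0^m a_m(y)$.

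This last step is the main obstacle, and aperiodicity is precisely what makes it work: the points $\{\homeo^{-n}y:n\in\Z\}$ are pairwise distinct. Given $\epsilon>0$, I will pick $N\geq|m|$ so that $\sum_{|n|>N}\Vert a_n\Vert<\epsilon$; then $\{\homeo^{-n}y:|n|\leq N\}$ is a finite subset of the compact Hausdorff space $\topspace$, and Urysohn's lemma furnishes $f\in\coeffalg$ with $\Vert f\Vert\leq 1$, $f(\homeo^{-m}y)=1$, and $f(\homeo^{-n}y)=0$ for $|n|\leq N$ with $n\neq m$. Substituted into the displayed identity, the terms with $|n|\leq N$ collapse to $\lambda_0^m a_m(y)$, while the remaining ones are bounded in modulus by $\sum_{|n|>N}|a_n(y)|\leq\sum_{|n|>N}\Vert a_n\Vert<\epsilon$. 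Hence $|a_m(y)|\leq\epsilon$, and letting $\epsilon\to 0$ completes the proof. A dual-action argument would be tempting but is unavailable here, since a general ideal $I$ need not be invariant under the dual $\T$-action described in Theorem~\ref{t:ideal_well_behaved}; the argument instead relies crucially on the combination of the right ideal property (which allows the multiplication of $a$ by arbitrary $f\in\coeffalg$) with the aperiodicity of $x$.
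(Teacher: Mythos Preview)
Your proof is correct and follows essentially the same approach as the paper. The core step $(2)\Rightarrow(5)$ uses the identical idea: exploit $a\cdot f\in I$ together with the formula $\Fourier(a\cdot f)(y,\lambda_0)=\sum_n\lambda_0^n a_n(y)f(\homeo^{-n}y)$, choose $f$ via Urysohn to isolate a single term (using that the orbit points are distinct since $x$ is aperiodic), and control the tail by an $\epsilon$-estimate. The paper organises the bookkeeping slightly differently---it first shows $a_0(x)=0$, then invokes the invariance of $\Zeroes(I)$ to pass to the orbit closure, and finally uses $a\cdot\delta^k\in I$ to shift indices---whereas you treat all $m$ and all $y\in\orbit{x}$ in one pass; but this is a cosmetic difference, not a genuinely different route.
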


\begin{proof}
Obviously (5) implies (4), (4) implies (3), and (3) implies (2); (2) and (1) are equivalent since $\Zeroes(I)$ is $\prodhomeo$-invariant. We will prove that (2) implies (5); so assume that $(x,\lambda)\in\Zeroes(I)$ for some $\lambda\in T$. Let $a=\loneelement{n}\in I$. We will start by showing that $a_0(x)=0$. Let $\epsilon>0$, and choose $N\geq 1$ such that $\sum_{|n|>N}\Vert a_n\Vert<\epsilon$. Since $x\in\aperpoints$, there exists $f\in\coeffalg$ with $\Vert f\Vert=1$ and such that $f(\homeo^n x)=0$, for $0<|n|\leq N$, while $f(x)=1$. Now $a\cdot f=\sum_n a_n\cdot(f\circ\homeo^{-n})\delta^n$, and since $a\cdot f$ is in $I$ we have
\begin{align*}
0&=\Fourier(a\cdot f)(x,\lambda)\\
&=\sum_n \lambda^n a_n(x) f(\homeo^{-n}x)\\
&=a_0(x) + \sum_{|n|>N}\lambda^n a_n(x)f(\homeo^{-n}x).
\end{align*}
Since the latter term is at most $\epsilon$ in absolute value, $|a_0(x)|\leq\epsilon$. Hence $a_0(x)=0$.

Since we know $\Zeroes(I)$ to be a closed $\prodhomeo$-invariant subset of $\XtimesT$, $(x^\prime,\lambda)$ is likewise in $\Zeroes(I)$, for all $x^\prime\in\orbitclosure{x}$. Hence the above argument shows that $a_0\rest{\orbitclosure{x}}=0$. Since $a\cdot\delta^k$ is in $I$, for all $k\in\Z$, we can now conclude that $a_n\rest{\orbitclosure{x}}=0$, for all $n\in\Z$. Hence, by part (1) of Proposition~\ref{p:belonging_to_kernel}, $a$ is in $\piaper{x}$.
\end{proof}

We now turn to the badly behaved self-adjoint closed ideals $\piper{x}{\lambda}$, for $x\in\perpoints$ and $\lambda\in\T$. As we will see in Corollary~\ref{c:I_and_Z_for_P_x_lambda}, they are quite well behaved as far as $\Ideal$ and $\Zeroes$ are concerned.

\begin{proposition}\label{p:included_in_P_x_lambda} Let $x\in\perpnew$, $\lambda\in\T$, and let $I$ be an ideal of $\lone$. Then the following are equivalent:
\begin{enumerate}
\item $\Zeroes(I)\cap \{x\}\times\standardsetofroots\neq\emptyset$;
\item $\Zeroes(I)\supset \{x\}\times\standardsetofroots$;
\item $\Zeroes(I)\supset \left[\,\orbit{x}\,\right]\times\standardsetofroots$;
\item $I\subset\piper{x}{\lambda}$.
\end{enumerate}
\end{proposition}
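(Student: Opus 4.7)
The plan is to run the cycle $(4)\Rightarrow(3)\Rightarrow(2)\Rightarrow(1)\Rightarrow(4)$. The implications $(3)\Rightarrow(2)$ and $(2)\Rightarrow(1)$ are trivial, and $(4)\Rightarrow(3)$ is a direct calculation: for $a = \loneelement{n} \in \piper{x}{\lambda}$, any $x' \in \orbit{x}$, and any $\mu \in \standardsetofroots$, regroup $\Fourier(a)(x',\mu) = \sum_n \mu^n a_n(x')$ along the cosets $n = lp+j$ ($0 \le j \le p-1$) and use $\mu^p = \lambda$ to reduce it to $\sum_j \mu^j \sum_l \lambda^l a_{lp+j}(x')$, which vanishes by Proposition~\ref{p:belonging_to_kernel}(2).

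The substantive step is $(1)\Rightarrow(4)$. Suppose $(x,\mu_0) \in \Zeroes(I)$ with $\mu_0^p = \lambda$. First I would use the $\prodhomeo$-invariance of $\Zeroes(I)$ (Lemma~\ref{l:basic_Zeroes_properties}) to upgrade this to $(x',\mu_0) \in \Zeroes(I)$ for every $x' \in \orbit{x}$. The task is then to extract from the single vanishing $\Fourier(a)(x',\mu_0) = 0$ the strictly stronger conclusion $\sum_l \lambda^l a_{lp+j}(x') = 0$ for every $j \in \{0,\ldots,p-1\}$ and every $x' \in \orbit{x}$, after which Proposition~\ref{p:belonging_to_kernel}(2) delivers $a \in \piper{x}{\lambda}$.

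To do this, I would exploit the ideal structure by multiplying $a$ on the right by arbitrary $f \in \coeffalg$, so $a\cdot f \in I$ and hence $\Fourier(a\cdot f)(x',\mu_0) = 0$. Since $x'$ has period $p$, we have $\homeo^{-(lp+j)}x' = \homeo^{-j} x'$, and Lemma~\ref{l:basic_Fourier_properties}(5) rewrites the vanishing as
\begin{equation*}
\sum_{j=0}^{p-1} \mu_0^j\, f(\homeo^{-j} x') \sum_{l\in\Z} \lambda^l a_{lp+j}(x') = 0.
\end{equation*}
Because the $p$ points $\homeo^{-j} x'$ ($0 \le j \le p-1$) are distinct, Urysohn's lemma allows $f$ to peak at any chosen $\homeo^{-j_0} x'$ while vanishing at the others, forcing each of the inner sums to vanish separately, which is exactly what is needed.

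The crux of the argument, and the only nontrivial step, is this separation-of-variables trick: the single vanishing $\Fourier(a)(x',\mu_0) = 0$ is by itself not strong enough to place $a$ in $\piper{x}{\lambda}$, but twisting by $f \in \coeffalg$ inside the ideal, combined with the finiteness of $\orbit{x'}$, delivers exactly $p$ independent linear conditions, one per coset class $j$. This is structurally parallel to the Urysohn-type argument used in the proof of Proposition~\ref{p:included_in_P_x} for aperiodic points, with the finiteness and distinctness of points in a period-$p$ orbit playing the role that aperiodicity plays there.
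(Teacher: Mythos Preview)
Your proof is correct and follows essentially the same route as the paper's: the cycle $(4)\Rightarrow(3)\Rightarrow(2)\Rightarrow(1)\Rightarrow(4)$, with $(4)\Rightarrow(3)$ via regrouping along cosets $n=lp+j$ and $(1)\Rightarrow(4)$ via right multiplication by a Urysohn function $f\in\coeffalg$ that isolates a single coset class $j_0$. Your explicit invocation of the $\prodhomeo$-invariance of $\Zeroes(I)$ to pass from the single point $x$ to all $x'\in\orbit{x}$ is in fact slightly more careful than the paper's presentation, which works only at $x$ and leaves the passage to the remaining orbit points implicit.
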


\begin{proof}
Assume that (4) holds, and that $\mu^p=\lambda$. Suppose $a=\loneelement{n}\in I\subset\piper{x}{\lambda}$. Then \eqref{e:vanishing} shows that
\[
\sum_{l\in\Z}\mu^{lp} a_{lp + j}(x^\prime) = 0,
\]
for all $j\in\{0,1,\ldots,p-1\}$, and all $x^\prime\in\orbit{x}$. Multiplying this relation by $\mu^j$, and summing the result over the set of all $j$, shows that $\Fourier(a)(x^\prime,\mu)=0$. Hence (4) implies (3). Certainly (3) implies (2), and (2) implies (1). We will show that (1) implies (4). Suppose, then, that $\mu^p=\lambda$ and that $(x,\mu)\in\Zeroes(I)$. Fix $a\in I$. For all $f\in\coeffalg$, $a\cdot f$ is in $I$, hence as in the proof of Proposition~\ref{p:included_in_P_x} we know that
\begin{align*}
0&=\Fourier(a\cdot f)(x,\mu)\\
&=\sum_n \mu^n a_n(x) f(\homeo^{-n}x)\\
&=\sum_{j=0}^{p-1} \left[\sum_{l\in\Z} \mu^{lp+j}a_{lp+j}(x)f(\homeo^{-j}x)\right].
\end{align*}
For $j_0\in\{0,1,\ldots,p-1\}$ fixed, choose $f$ such that $f(\homeo^{-j}x)=0$ for $j_0\neq j\in\{0,1,\ldots,p-1\}$, and $f(\homeo^{-j_0} x)=1$. Then in the above equation only the inner summation for $j=j_0$ survives, and together with $\mu^p=\lambda$ this yields
\[
\sum_{l\in\Z} \lambda^l a_{lp+j_0}(\homeo^{-j_0}x)=0.
\]
The second part of Proposition~\ref{p:belonging_to_kernel} then shows that $a\in\piper{x}{\lambda}$.
\end{proof}

\begin{corollary}\label{c:I_and_Z_for_P_x_lambda} Let $x\in\perpnew$, and $\lambda\in\T$. Then:
\begin{enumerate}
\item $\Zeroes(\piper{x}{\lambda})=\left[\,\orbit{x}\,\right]\times\standardsetofroots$;
\item $\Ideal(\left[\,\orbit{x}\,\right]\times\standardsetofroots)=\piper{x}{\lambda}$;
\item If $I$ is an ideal of $\lone$, then the following are equivalent:
\begin{enumerate}
\item $\Zeroes(I)\cap\Zeroes(\piper{x}{\lambda})\neq\emptyset$;
\item $\Zeroes(I)\supset\Zeroes(\piper{x}{\lambda})$;
\item $I\subset\piper{x}{\lambda}$.
\end{enumerate}
\end{enumerate}
\end{corollary}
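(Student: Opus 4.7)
\smallskip

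The plan is to deduce all three parts directly from Proposition~\ref{p:included_in_P_x_lambda}, with Proposition~\ref{p:properties_of_families} supplying the extra information needed to pin down the reverse inclusion in part~(1). The forward inclusion in part~(1) is immediate: since trivially $\piper{x}{\lambda}\subset\piper{x}{\lambda}$, condition~(4) of Proposition~\ref{p:included_in_P_x_lambda} holds, and the equivalent condition~(3) yields $\Zeroes(\piper{x}{\lambda})\supset[\orbit{x}]\times\standardsetofroots$.

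For the reverse inclusion in part~(1), I would take an arbitrary $(y,\mu)\in\Zeroes(\piper{x}{\lambda})$ and split into cases according to whether $y$ is aperiodic or periodic. If $y\in\aperpoints$, then Proposition~\ref{p:included_in_P_x} applied to $I=\piper{x}{\lambda}$ would force $\piper{x}{\lambda}\subset\piaper{y}$, contradicting part~(4)(a)(i) of Proposition~\ref{p:properties_of_families}. Hence $y$ is periodic, say of some period $q\geq 1$; since trivially $\mu\in\setofroots{\nu}{q}{\mu^q}$, another application of Proposition~\ref{p:included_in_P_x_lambda} (now with $(y,\mu^q,q)$ playing the role of $(x,\lambda,p)$) gives $\piper{x}{\lambda}\subset\piper{y}{\mu^q}$. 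Part~(3)(b) of Proposition~\ref{p:properties_of_families} then forces $\orbit{x}=\orbit{y}$ (so that $q=p$) and $\lambda=\mu^q=\mu^p$, placing $(y,\mu)$ in $[\orbit{x}]\times\standardsetofroots$.

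Parts~(2) and~(3) should be formal consequences. For part~(2), the inclusion $\Ideal\Zeroes(I)\supset I$ of Lemma~\ref{l:Zeroes_Ideal_composition}(3) combined with part~(1) gives $\Ideal([\orbit{x}]\times\standardsetofroots)\supset\piper{x}{\lambda}$; conversely, writing $J:=\Ideal([\orbit{x}]\times\standardsetofroots)$, the inclusion $\Zeroes\Ideal(S)\supset S$ from Lemma~\ref{l:Zeroes_Ideal_composition}(2) gives $\Zeroes(J)\supset[\orbit{x}]\times\standardsetofroots$, so Proposition~\ref{p:included_in_P_x_lambda} applied to $J$ yields $J\subset\piper{x}{\lambda}$. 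For part~(3), the implication (c)$\Rightarrow$(b) is just the monotonicity of $\Zeroes$ (Lemma~\ref{l:basic_Zeroes_properties}(3)); (b)$\Rightarrow$(a) is trivial since $\Zeroes(\piper{x}{\lambda})$ is manifestly non-empty by part~(1); and (a)$\Rightarrow$(c) follows from part~(1) by using the $\prodhomeo$-invariance of $\Zeroes(I)$ to translate an element of $\Zeroes(I)\cap[\orbit{x}]\times\standardsetofroots$ to a point of $\Zeroes(I)\cap\{x\}\times\standardsetofroots$, after which Proposition~\ref{p:included_in_P_x_lambda} finishes the job.

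The main obstacle is the case analysis in the reverse inclusion of part~(1), since this is where the previously accumulated non-containment result (4)(a)(i) and the rigidity result~(3)(b) of Proposition~\ref{p:properties_of_families} are both genuinely needed; once this is in place, everything else is a routine manipulation of the Galois-type adjunction between $\Zeroes$ and $\Ideal$ that has already been set up.
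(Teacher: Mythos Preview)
Your proof is correct. Parts~(2) and~(3) match the paper's argument essentially verbatim, but your treatment of the reverse inclusion in part~(1) is genuinely different. The paper argues more directly: given $(x',\mu)\in\Zeroes(\piper{x}{\lambda})$, it evaluates the transform of the explicit element $1-(1/\lambda)\delta^p\in\piper{x}{\lambda}$ at $(x',\mu)$ to force $\mu^p=\lambda$, and then uses the containment $\piperintersection{x}\subset\piper{x}{\lambda}$ together with $\Zeroes(\piperintersection{x})=[\,\orbit{x}\,]\times\T$ (from Proposition~\ref{p:zeroes_of_well_behaved_ideal}) to conclude $x'\in\orbit{x}$. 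Your route instead feeds $(y,\mu)$ back through Propositions~\ref{p:included_in_P_x} and~\ref{p:included_in_P_x_lambda} to produce an inclusion $\piper{x}{\lambda}\subset\piaper{y}$ or $\piper{x}{\lambda}\subset\piper{y}{\mu^q}$, and then invokes the rigidity/non-containment results of Proposition~\ref{p:properties_of_families} to rule out the first and pin down the second. The paper's approach is more self-contained within Section~\ref{sec:l_one_model} and avoids the case split; yours is more structural, leaning on the classification of inclusions already established in Section~\ref{sec:preliminaries}, and it has the minor advantage of not needing to exhibit any particular element of $\piper{x}{\lambda}$.
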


\begin{proof}
Proposition~\ref{p:included_in_P_x_lambda} shows that $\Zeroes(\piper{x}{\lambda})\supset\left[\,\orbit{x}\,\right]\times\standardsetofroots$. For the reverse inclusion, assume that $(x^\prime,\mu)\in\Zeroes(\piper{x}{\lambda})$, for some $x^\prime\in\topspace$ and $\mu\in\T$. Since $1-(1/\lambda)\delta^p$ is in $\piper{x}{\lambda}$, we see that $1-(\mu^p/\lambda)=0$, hence $\mu^p=\lambda$. In order to show that we must have $x^\prime\in\orbit{x}$, note that $\piper{x}{\lambda}\supset\piperintersection{x}$, hence $\Zeroes(\piper{x}{\lambda})\subset\Zeroes(\piperintersection{x})$, implying that $(x^\prime,\mu)\in\Zeroes(\piperintersection{x})$.
Since $\piperintersection{x}$ is a well behaved ideal, Lemma~\ref{l:basic_Zeroes_properties} shows that $\Zeroes(\piperintersection{x})=\Hull(\piperintersection{x})\times\T=\left[\,\orbit{x}\,\right]\times\T$. Hence $x^\prime\in\orbit{x}$. This concludes the proof of part (1). For part (2), we note that certainly $\Ideal\Zeroes(\piper{x}{\lambda})\supset\piper{x}{\lambda}$. On the other hand, $\Zeroes(\Ideal\Zeroes(\piper{x}{\lambda}))=\Zeroes(\piper{x}{\lambda})=\left[\,\orbit{x}\,\right]\times\standardsetofroots$ by part (1), hence Proposition~\ref{p:included_in_P_x_lambda} shows that $\Ideal\Zeroes(\piper{x}{\lambda})\subset\piper{x}{\lambda}$. Therefore, $\Ideal\Zeroes(\piper{x}{\lambda})=\piper{x}{\lambda}$. Then (2) follows from this and an application of $\Ideal$ to the equality in (1).
Part (3) follows easily from part (1), the $\prodhomeo$-invariance of $\Zeroes(I)$, and Proposition~\ref{p:included_in_P_x_lambda}.
\end{proof}

Finally, for our third family, Proposition~\ref{p:zeroes_of_well_behaved_ideal} and Lemma~\ref{l:order_inflection} imply the following.

\begin{corollary}\label{c:I_and_Z_for_Q_x}
Let $x\in\perpoints$. Then:
\begin{enumerate}
\item $\Zeroes(\piperintersection{x})=\left[\,\orbit{x}\,\right]\times\T$;
\item $\Ideal(\left[\,\orbit{x}\,\right]\times\T)=\piperintersection{x}$;
\item If $I$ is an ideal of $\lone$, then $I\subset\piperintersection{x}$ if and only if $\Zeroes(I)\supset\Zeroes(\piperintersection{x})$.
\end{enumerate}
\end{corollary}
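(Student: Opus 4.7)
The plan is to exploit the fact, already established in Proposition~\ref{p:behaviour_of_families}(3)(a), that $\piperintersection{x} = \Laurentseries{\kernel(\orbit{x})}$ is a well behaved closed ideal of $\lone$; this places it within the scope of the bijection-type result Proposition~\ref{p:zeroes_of_well_behaved_ideal}. Additionally, since $x$ is periodic, its orbit $\orbit{x}$ is finite, hence closed, and $\homeo$-invariant, so $\orbit{x}\times\T$ sits inside the distinguished family $\{A\times\T : A\subset\topspace \textup{ closed and $\homeo$-invariant}\}$ that is in bijection with the well behaved closed ideals.

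For part (1), I would just invoke Proposition~\ref{p:zeroes_of_well_behaved_ideal} to write $\Zeroes(\piperintersection{x}) = \Hull(\piperintersection{x})\times\T$ and then compute $\Hull(\piperintersection{x}) = \hull(\Eone(\piperintersection{x})) = \hull(\kernel(\orbit{x})) = \overline{\orbit{x}} = \orbit{x}$, using Lemma~\ref{l:basic_Hull_properties}(8), the explicit description of $\piperintersection{x}$, the standard identity $\hull\kernel=\idmap$ on closed subsets, and closedness of periodic orbits. Part (2) is then immediate from the bijectivity statement in Proposition~\ref{p:zeroes_of_well_behaved_ideal}: $\Ideal(\orbit{x}\times\T)$ must be the unique well behaved closed ideal mapped by $\Zeroes$ to $\orbit{x}\times\T$, which by part (1) is $\piperintersection{x}$.

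For part (3), the forward implication is simply monotonicity of $\Zeroes$ (Lemma~\ref{l:basic_Zeroes_properties}(3)). For the reverse, I would note that parts (1) and (2) together say that $\piperintersection{x}$ is a fixed point of $\Ideal\circ\Zeroes$, then apply $\Ideal$ to the hypothesis $\Zeroes(I)\supset\Zeroes(\piperintersection{x})$, using monotonicity of $\Ideal$ (Lemma~\ref{l:basic_Ideal_properties}(10)) and the inclusion $I\subset\Ideal\Zeroes(I)$ (Lemma~\ref{l:Zeroes_Ideal_composition}(3)), to conclude $I\subset\Ideal\Zeroes(\piperintersection{x}) = \piperintersection{x}$. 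This is precisely the abstract pattern of Lemma~\ref{l:order_inflection} in the appendix, which is the cleaner way to phrase the argument.

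There is no real obstacle here: once one recognises that $\piperintersection{x}$ falls on the well behaved side of the dichotomy (as opposed to the $\piper{x}{\lambda}$, which do not and required the dedicated Proposition~\ref{p:included_in_P_x_lambda}), everything reduces to bookkeeping within the abstract order-theoretic framework already set up.
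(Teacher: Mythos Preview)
Your proposal is correct and mirrors the paper's own (one-line) justification, which simply cites Proposition~\ref{p:zeroes_of_well_behaved_ideal} and Lemma~\ref{l:order_inflection}. You have merely unpacked those two references with the appropriate auxiliary lemmas; there is nothing to add or correct.
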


Before proceeding, let us collect a few consequences of the results thus far.

\begin{proposition}\label{p:C_star_closure_proper}
Let $I$ be an ideal of $\lone$. Then the closure of $I$ in $\cstar$ is a proper closed ideal of $\cstar$ if and only if $\Zeroes(I)\neq\emptyset$.
\end{proposition}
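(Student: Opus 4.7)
The plan is to reduce both conditions to the same inclusion statement for $I$ in a standard primitive ideal, and then glue the two reductions. First, combining Propositions~\ref{p:included_in_P_x} and~\ref{p:included_in_P_x_lambda} with the monotonicity of $\Zeroes$ yields the equivalence
\[
\Zeroes(I)\neq\emptyset \iff I\subset\piaper{x}\text{ for some }x\in\aperpoints,\text{ or }I\subset\piper{x}{\lambda}\text{ for some }x\in\perpoints,\,\lambda\in\T,
\]
since any $(x,\mu)\in\Zeroes(I)$ falls into one of the two cases ($x\in\aperpoints$, or $x\in\perpnew$ with $\lambda:=\mu^p$), and conversely Corollaries~\ref{c:I_and_Z_for_P_x} and~\ref{c:I_and_Z_for_P_x_lambda} show that $\Zeroes$ of any such primitive ideal is already non-empty.

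For the direction ``$\Zeroes(I)\neq\emptyset\Rightarrow$ closure is proper'', suppose $I$ is contained in a standard primitive ideal, say $I\subset\piaper{x}$. The irreducible involutive representation $\pi_x$ of $\lone$ extends uniquely to an irreducible representation $\widetilde{\pi}_x$ of $\cstar$, and $\Ker\widetilde{\pi}_x\cap\lone=\piaper{x}$. The closure of $I$ in $\cstar$ is therefore contained in $\Ker\widetilde{\pi}_x$, which is proper in $\cstar$ since $\widetilde{\pi}_x(1)=\idmap\neq 0$. The case $I\subset\piper{x}{\lambda}$ is handled identically.

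For the converse, suppose the closure $\overline{I}$ of $I$ in $\cstar$ is a proper closed ideal. The decisive input is \cite[Proposition~2]{T2} (recalled in Remark~\ref{r:intersection_discussion}): every closed ideal of $\cstar$ is an intersection of $\cstar$-counterparts of the standard primitive ideals $\piaper{x}$ ($x\in\aperpoints$) and $\piper{x}{\lambda}$ ($x\in\perpoints,\lambda\in\T$). Since $\overline{I}\neq\cstar$, this intersection must be over a non-empty collection, so $\overline{I}$ is contained in at least one standard $\cstar$-primitive ideal. Intersecting with $\lone$ gives either $I\subset\piaper{x}$ or $I\subset\piper{x}{\lambda}$, and the equivalence of the first paragraph then produces a point of $\Zeroes(I)$.

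The main obstacle is this converse: its content is the non-trivial structural fact that the standard family of primitive ideals of $\cstar$, arising from pure state extensions of point evaluations, is already rich enough for every proper closed ideal of $\cstar$ to be contained in one of its members. For non-metrizable $\topspace$ this does not follow from any available Effros--Hahn type classification such as \cite{GR}, so the appeal to \cite[Proposition~2]{T2} is essential; the rest of the argument is a routine transfer between $\lone$ and $\cstar$ via the material of Section~\ref{subsec:families_of_ideals}.
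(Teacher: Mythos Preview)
Your proof is correct and follows essentially the same approach as the paper: both directions hinge on reducing $\Zeroes(I)\neq\emptyset$ to containment of $I$ in a standard primitive ideal via Propositions~\ref{p:included_in_P_x} and~\ref{p:included_in_P_x_lambda}, then passing to $\cstar$ through the extended representations for the forward direction and invoking \cite[Proposition~2]{T2} for the converse. Your explicit isolation of the intermediate equivalence in the first paragraph and your closing commentary on why \cite[Proposition~2]{T2} is the essential ingredient are nice touches, but the underlying argument is the same.
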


\begin{proof}
If $\Zeroes(I)\neq\emptyset$, then Proposition~\ref{p:included_in_P_x} and Proposition~\ref{p:included_in_P_x_lambda} imply that $I$ is contained in an ideal $\piaper{x}$, for some $x\in\aperpoints$, or in an ideal $\piper{x}{\lambda}$, for some $x\in\perpoints$ and $\lambda\in\T$. Hence it is contained in the kernel of the extension of the involutive representation $\pi_x$ or $\pi_{x,\lambda}$ to $\cstar$. As these kernels are proper closed ideals of $\cstar$, the closure of $I$ in $\cstar$ is also proper. Conversely, if the closure of $I$ in $\cstar$ is proper, then by \cite[Proposition~2]{T2}, this closure is the intersection of a number of kernels of such extended involutive representations. Taking the intersection with $\lone$ then implies that $I$ is contained in an ideal $\piaper{x}$, for some $x\in\aperpoints$, or in an ideal $\piper{x}{\lambda}$, for some $x\in\perpoints$ and $\lambda\in\T$. Proposition~\ref{p:included_in_P_x} and Proposition~\ref{p:included_in_P_x_lambda} then show that $\Zeroes(I)\neq\emptyset$.
\end{proof}

\begin{remark}\label{r:C_closure_proper}If it were true that $\Zeroes(I)\neq\emptyset$, for all proper ideals of $\lone$, then many of the known results for $\cstar$ that relate the dynamics to the ideal structure of the algebra would have immediate counterparts for $\lone$. For example, it is known (cf.\ \cite[Theorem~4.2]{DST}) that $\lone$ has only trivial closed ideals precisely when $\topspace$ has an infinite number of points, and $\dynsysshort$ is minimal. The difficult part is to conclude the minimality from the dynamics, but if we could pass from proper closed ideals of $\lone$ to proper closed ideals of $\cstar$, then this would be obvious from its counterpart for $\cstar$ (cf.\ \cite[Theorem~5.3]{T1})
\end{remark}

\begin{proposition}
If $I$ is a badly behaved ideal of $\lone$, then $\Zeroes(I)\subset\perpoints\times\T$.
\end{proposition}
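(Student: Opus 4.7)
The plan is a short contrapositive argument, using Proposition~\ref{p:included_in_P_x} as the essential tool. Assume $I$ is badly behaved, so $\Eone(I)=\coeffalg$. Suppose, aiming at a contradiction, that $\Zeroes(I)\not\subset\perpoints\times\T$; then there exist $x\in\aperpoints$ and $\lambda\in\T$ with $(x,\lambda)\in\Zeroes(I)$.

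By the equivalence of (2) and (5) in Proposition~\ref{p:included_in_P_x}, this inclusion forces $I\subset\piaper{x}$. Apply the projection $\Eone$: since $\piaper{x}$ is well behaved (Proposition~\ref{p:behaviour_of_families}(1)), we have $\Eone(\piaper{x})=\kernel(\orbitclosure{x})$, and hence
\[
\Eone(I)\subset\Eone(\piaper{x})=\kernel(\orbitclosure{x}).
\]
But $\orbitclosure{x}$ is a non-empty closed subset of $\topspace$, so $\kernel(\orbitclosure{x})$ is a proper ideal of $\coeffalg$. This contradicts $\Eone(I)=\coeffalg$.

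There is no real obstacle here; once one has Proposition~\ref{p:included_in_P_x} at hand, the statement is essentially a one-line consequence, expressing the simple heuristic that aperiodic points in $\Zeroes(I)$ would trap $I$ inside a well behaved ideal, which is incompatible with the maximality of $\Eone(I)$ required by bad behaviour. Accordingly, $\Zeroes(I)\cap(\aperpoints\times\T)=\emptyset$, which is the asserted inclusion $\Zeroes(I)\subset\perpoints\times\T$.
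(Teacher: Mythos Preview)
Your proof is correct and follows essentially the same route as the paper: both use Proposition~\ref{p:included_in_P_x} to conclude $I\subset\piaper{x}$ from the existence of an aperiodic point in $\Zeroes(I)$, and then derive a contradiction with bad behaviour. The only cosmetic difference is that the paper phrases the contradiction as ``$\piaper{x}$ is not badly behaved'' (implicitly via Lemma~\ref{l:behaved_lemma}(2)), whereas you unpack this by applying $\Eone$ directly and noting $\Eone(\piaper{x})=\kernel(\orbitclosure{x})\subsetneq\coeffalg$.
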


\begin{proof}
If $\Zeroes(I)\subsetneqq \perpoints\times\T$ then Proposition~\ref{p:included_in_P_x} implies that $I\subset\piaper{x}$, for some $x\in\aperpoints$. This contradicts that $\piaper{x}$ is not badly behaved.
\end{proof}

We will now use the fact that we are in the setup of Appendix~\ref{sec:appendix}, when we let $\Zeroesdomain$ be the closed ideals of $\lone$, and $\Idealdomain$ the $\prodhomeo$-invariant subsets of $\XtimesT$, with $\Zeroes$ mapping the former into the latter, and $\Ideal$ going in the opposite direction. It is then possible to describe the fixed points of $\Ideal\Zeroes$: according to Corollary~\ref{c:corollary_of_three_maps_lemma}, these are precisely the closed ideals of the form $\Ideal(S)$, with $S$ a $\prodhomeo$-invariant subset of $\XtimesT$. To make such ideals explicit, the following result is needed.

\begin{proposition}\label{p:key_pre_image_proposition}\quad
\begin{enumerate}
\item Let $x\in\aperpoints$, and suppose $\emptyset\neq S\subset[\,\orbit{x}\,]\times\T$ is $\prodhomeo$-invariant. Then $\Ideal(S)=\piaper{x}$.
\item Let $x\in\perpnew$, and suppose $\emptyset\neq S\subset\Zeroes(\piper{x}{\lambda})=[\,\orbit{x}\,]\times\standardsetofroots$ is $\prodhomeo$-invariant. Then $\Ideal(S)=\piper{x}{\lambda}$.
\end{enumerate}
\end{proposition}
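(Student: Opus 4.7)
The plan is to prove both parts as a sandwich argument: one inclusion comes from monotonicity of $\Ideal$, the other from applying the characterisations in Proposition~\ref{p:included_in_P_x} and Proposition~\ref{p:included_in_P_x_lambda} to the ideal $\Ideal(S)$, combined with the fundamental fact that $\Zeroes\Ideal(S)\supset S$ (Lemma~\ref{l:Zeroes_Ideal_composition}(2)). The point is that those two propositions have already been sharpened so that a single zero of the right kind forces containment in the corresponding primitive ideal; this is exactly what is needed, since nothing is assumed about $S$ beyond nonemptiness.

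For part (1), from $S\subset [\,\orbit{x}\,]\times\T$ I would invoke monotonicity and closure-invariance (parts (10) and (11) of Lemma~\ref{l:basic_Ideal_properties}) to get $\Ideal(S)\supset\Ideal([\,\orbitclosure{x}\,]\times\T)$, and then Corollary~\ref{c:I_and_Z_for_P_x}(2) identifies the right-hand side as $\piaper{x}$. For the reverse direction, $\emptyset\neq S\subset\Zeroes\Ideal(S)\cap[\,\orbit{x}\,]\times\T$, so the equivalence of (1) and (5) in Proposition~\ref{p:included_in_P_x}, applied with $I=\Ideal(S)$, yields $\Ideal(S)\subset\piaper{x}$.

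Part (2) is entirely parallel. For $\supset$ I would use $S\subset\Zeroes(\piper{x}{\lambda})=[\,\orbit{x}\,]\times\standardsetofroots$ from Corollary~\ref{c:I_and_Z_for_P_x_lambda}(1) together with monotonicity of $\Ideal$ to obtain $\Ideal(S)\supset\Ideal\Zeroes(\piper{x}{\lambda})=\piper{x}{\lambda}$, the final equality being Corollary~\ref{c:I_and_Z_for_P_x_lambda}(2). For $\subset$, I would pick any $(x^\prime,\mu)\in S\subset\Zeroes\Ideal(S)$, so that $x^\prime\in\orbit{x}$ and $\mu\in\standardsetofroots$, and exploit the $\prodhomeo$-invariance of $\Zeroes\Ideal(S)$ (Lemma~\ref{l:basic_Zeroes_properties}) to translate to $(x,\mu)\in\Zeroes\Ideal(S)\cap\{x\}\times\standardsetofroots$; then the equivalence of (1) and (4) in Proposition~\ref{p:included_in_P_x_lambda} gives $\Ideal(S)\subset\piper{x}{\lambda}$.

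There is no real obstacle: the statement is a formal consequence of the sharpened single-zero criteria already proved. The only step that needs a moment's care is the translation by a power of $\prodhomeo$ in part (2), used to move from the available zero $(x^\prime,\mu)$ to one whose first coordinate is literally $x$, so that Proposition~\ref{p:included_in_P_x_lambda} applies verbatim; but this is immediate from the $\prodhomeo$-invariance of the zero set of an ideal.
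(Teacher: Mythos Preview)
Your proposal is correct and follows essentially the same sandwich argument as the paper's proof: one inclusion via monotonicity of $\Ideal$ applied to $S\subset\Zeroes(\piaper{x})$ (resp.\ $S\subset\Zeroes(\piper{x}{\lambda})$) together with $\Ideal\Zeroes=\idmap$ on these ideals, and the other via $\Zeroes\Ideal(S)\supset S\neq\emptyset$ combined with the single-zero criteria of Propositions~\ref{p:included_in_P_x} and~\ref{p:included_in_P_x_lambda}. The only difference is that you spell out the translation step in part~(2) to land at the base point $x$, whereas the paper simply writes ``proved similarly''; this is a matter of presentation, not substance.
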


\begin{proof}
As to (1), since $\Zeroes\Ideal(S)\supset S\neq\emptyset$, the condition in part (1) of Proposition~\ref{p:included_in_P_x} is satisfied for $\Ideal(S)$, and we conclude that $\Ideal(S)\subset\piaper{x}$. On the other hand, certainly $S\subset[\,\orbitclosure{x}\,]\times\T=\Zeroes(\piaper{x})$, hence $\Ideal(S)\supset\Ideal\Zeroes(\piaper{x})=\piaper{x}$. Thus $\Ideal(S)=\piaper{x}$. The second part is proved similarly, using Proposition~\ref{p:included_in_P_x_lambda}.
\end{proof}

\begin{theorem}\label{t:fixed_points_of_IZ}
Let $I$ be an ideal of $\lone$. Then the following are equivalent:
\begin{enumerate}
\item $\Ideal\Zeroes(I)=I$;
\item There exist \ulb possibly empty\urb\ sets $\setofaperpoints\subset\aperpoints$, $\setofperpoints\subset\perpoints$, and, for each $x\in\setofperpoints$, a set $\T_x\subset\T$, such that
\begin{equation}\label{e:I_Z_fixed_is_intersection}
I=\bigcap_{x\in\setofaperpoints}\piaper{x}\,\bigcap_{x\in\setofperpoints}\bigcap_{\lambda\in\T_x}\piper{x}{\lambda};
\end{equation}
\item $I$ is the kernel of an involutive representation of $\lone$;
\item If $I^\prime$ is an ideal of $\lone$, then $I^\prime\subset I$ if and only if $\Zeroes(I^\prime)\supset\Zeroes(I)$.
\end{enumerate}
In that case, $I$ is a self-adjoint closed ideal of $\lone$. If $\setofaperpoints\subset\aperpoints$, $\setofperpoints\subset\perpoints$, and, for each $x\in\setofaperpoints\cup\setofperpoints$, $\T_x\subset\T$, are such that
\[
\Zeroes(I)=\bigcup_{x\in\setofaperpoints}\bigcup_{\lambda\in\T_x}[\,\orbit{x}\,]\times\{\lambda\}\,\bigcup_{x\in\setofperpoints}\bigcup_{\mu\in\T_x}[\,\orbit{x}\,]\times\{\mu\},
\]
then
\[
I=\bigcap_{x\in\setofaperpoints}\piaper{x}\,\bigcap_{x\in\setofperpoints}\bigcap_{\mu\in\T_x}\piper{x}{\mu^p}
\]
is an explicit intersection as in part \ulb\textup{2}\urb.
\end{theorem}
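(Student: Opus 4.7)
The plan is to pivot on the abstract framework of Appendix~\ref{sec:appendix}, into which we fall by the discussion immediately preceding the statement, and to exploit the explicit computations of $\Ideal$ on single-orbit slices contained in Proposition~\ref{p:key_pre_image_proposition}. First, the equivalence (1) $\Leftrightarrow$ (4) is a direct application of Lemma~\ref{l:order_inflection}. For (2) $\Rightarrow$ (1), I would note that every ideal in the standard families is already a fixed point of $\Ideal\Zeroes$, by Corollaries~\ref{c:I_and_Z_for_P_x}(2) and~\ref{c:I_and_Z_for_P_x_lambda}(2), and that the set of fixed points is closed under arbitrary intersections: if $I = \bigcap_\alpha I_\alpha$ with each $I_\alpha$ fixed, then $I \subset I_\alpha$ forces $\Ideal\Zeroes(I) \subset \Ideal\Zeroes(I_\alpha) = I_\alpha$ for every $\alpha$, hence $\Ideal\Zeroes(I) \subset I$, while $\Ideal\Zeroes(I) \supset I$ is automatic from Lemma~\ref{l:Zeroes_Ideal_composition}(3).

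For the converse (1) $\Rightarrow$ (2), I assume $\Ideal\Zeroes(I) = I$ and slice $\Zeroes(I)$ over $\homeo$-orbits in $\topspace$; since $\Zeroes(I)$ is $\prodhomeo$-invariant, this slicing is well-defined. By Proposition~\ref{p:included_in_P_x}, the slice over an aperiodic orbit $\orbit{x}$ is either empty or equal to $[\,\orbit{x}\,]\times\T$; by Proposition~\ref{p:included_in_P_x_lambda}, the slice over a periodic orbit $\orbit{x}$ with $x\in\perpnew$ decomposes as a $\prodhomeo$-invariant union of sets $[\,\orbit{x}\,]\times\{\mu\}$, parametrised by a (possibly empty) subset $\T_x\subset\T$ consisting of the $\T$-projections of the slice. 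Taking $\Ideal$ of this union and applying Lemma~\ref{l:basic_Ideal_properties}(18) together with Proposition~\ref{p:key_pre_image_proposition} identifies $I$ with the intersection~\eqref{e:I_Z_fixed_is_intersection}, and at the same time yields the explicit concluding formula: each slice $[\,\orbit{x}\,]\times\{\mu\}$ with $x\in\perpnew$ contributes the primitive ideal $\piper{x}{\mu^p}$ via Proposition~\ref{p:key_pre_image_proposition}(2), while the aperiodic slices contribute $\piaper{x}$ via Proposition~\ref{p:key_pre_image_proposition}(1).

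For (2) $\Rightarrow$ (3), take the Hilbert-space direct sum of the irreducible representations $\pi_x$ and $\pi_{x,\lambda}$ indexed by the parameters appearing in the intersection; this is an involutive representation of $\lone$ whose kernel is precisely that intersection. For (3) $\Rightarrow$ (2), I would pass to the enveloping $C^\ast$-algebra: by the automatic continuity noted after Definition~\ref{d:primitive_ideal}, $\pi$ is contractive and so extends uniquely to an involutive representation $\tilde\pi$ of $\cstar$ satisfying $\Ker(\tilde\pi)\cap\lone = I$. The known $\cstar$-analogue of (2), which is \cite[Proposition~2]{T2} (recalled in Remark~\ref{r:intersection_discussion}), writes $\Ker(\tilde\pi)$ as an intersection of kernels of the $\cstar$-extensions of $\pi_x$ and $\pi_{x,\lambda}$; intersecting with $\lone$ realises $I$ as in (2). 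Self-adjointness and closedness of such $I$ then follow at once from the fact that every primitive ideal in the standard families is a self-adjoint closed ideal.

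The main obstacle I anticipate is the slicing argument in (1) $\Rightarrow$ (2): one has to verify that, for each orbit, the slice of $\Zeroes(I)$ has one of the shapes appearing in Propositions~\ref{p:included_in_P_x}--\ref{p:included_in_P_x_lambda}, as opposed to some more general $\prodhomeo$-invariant subset of $[\,\orbit{x}\,]\times\T$. This is exactly where the $\prodhomeo$-invariance of $\Zeroes(I)$ combined with those two propositions does the real work, after which Proposition~\ref{p:key_pre_image_proposition} closes the deal in one step.
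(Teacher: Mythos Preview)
Your proposal is correct and follows the paper's argument closely: the equivalence $(1)\Leftrightarrow(4)$ via Lemma~\ref{l:order_inflection}, the slicing of $\Zeroes(I)$ into orbit pieces followed by Proposition~\ref{p:key_pre_image_proposition} and Lemma~\ref{l:basic_Ideal_properties}(18) for $(1)\Rightarrow(2)$, the Hilbert sum for $(2)\Rightarrow(3)$, and the passage through $\cstar$ and \cite[Proposition~2]{T2} for $(3)\Rightarrow(2)$ are all exactly what the paper does.

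The one genuine difference is in $(2)\Rightarrow(1)$. The paper argues that any intersection as in \eqref{e:I_Z_fixed_is_intersection} equals $\Ideal(S)$ for the explicit $S=\bigcup_{x\in\setofaperpoints}\Zeroes(\piaper{x})\cup\bigcup_{x\in\setofperpoints}\bigcup_{\lambda\in\T_x}\Zeroes(\piper{x}{\lambda})$, and then invokes the abstract fact (Corollary~\ref{c:corollary_of_three_maps_lemma}) that the image of $\Ideal$ coincides with the fixed points of $\Ideal\Zeroes$. Your route is instead to observe directly that $\fix(\Ideal\Zeroes)$ is closed under arbitrary intersections via monotonicity, and that each $\piaper{x}$ and $\piper{x}{\lambda}$ is already a fixed point by Corollaries~\ref{c:I_and_Z_for_P_x} and~\ref{c:I_and_Z_for_P_x_lambda}. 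Both are short; yours is perhaps a touch more self-contained since it avoids assembling the explicit $S$, while the paper's version has the advantage of making the bijection with $\prodhomeo$-invariant sets explicit at the same time. One small remark: your ``main obstacle'' paragraph overstates the difficulty, since Proposition~\ref{p:key_pre_image_proposition} applies to \emph{any} nonempty $\prodhomeo$-invariant slice of the relevant cylinder, so no structural information about the slice beyond nonemptiness is actually needed.
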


\begin{proof}
From Appendix~\ref{sec:appendix} we know that the ideals as in part (1) are precisely the ideals of the form $\Ideal(S)$, for $S$ a $\prodhomeo$-invariant subset of $\XtimesT$. We will show that these ideals are precisely the intersections as in the right hand side of \eqref{e:I_Z_fixed_is_intersection}.

If $S\subset\XtimesT$ is $\prodhomeo$-invariant, then it is evidently possible to find subsets $\setofaperpoints\subset\aperpoints$, $\setofperpoints\subset\perpoints$, and, for each $x\in\setofaperpoints\cup\setofperpoints$ a set $\T_x\subset\T$ such that
\[
S=\bigcup_{x\in\setofaperpoints}\bigcup_{\lambda\in\T_x}[\,\orbit{x}\,]\times\{\lambda\}\,\bigcup_{x\in\setofperpoints}\bigcup_{\mu\in\T_x}[\,\orbit{x}\,]\times\{\mu\}.
\]
Now Proposition~\ref{p:key_pre_image_proposition} shows that $\Ideal([\,\orbit{x}\,]\times\{\lambda\})$ equals $\piaper{x}$, if $x\in\aperpoints$, and that it equals $\piper{x}{\mu^p}$, if $x\in\perpoints$. Hence the penultimate statement in Lemma~\ref{l:basic_Ideal_properties} shows that $\Ideal(S)$ is an intersection as in the right hand side of \eqref{e:I_Z_fixed_is_intersection}. Conversely, all ideals $I$ that can be written as an intersection in \eqref{e:I_Z_fixed_is_intersection} can be obtained as $\Ideal(S)$ for a suitable $\prodhomeo$-invariant $S\subset\XtimesT$: according to Corollary~\ref{c:I_and_Z_for_P_x}, Corollary~\ref{c:I_and_Z_for_P_x_lambda}, and the penultimate statement of Lemma~\ref{l:basic_Ideal_properties}, if $S=\bigcup_{x\in\setofaperpoints}\Zeroes(\piaper{x})\,\bigcup_{x\in\setofperpoints}\,\bigcup_{\lambda\in\T_x}\Zeroes(\piper{x}{\lambda})$, then $\Ideal(S)=I$. Thus (1) and (2) are equivalent.

If $I$ is an intersection as in \eqref{e:I_Z_fixed_is_intersection}, note that each of these ideals is the kernel of an involutive representation. Hence $I$ is the kernel of the Hilbert sum of these representations. Hence (2) implies (3). If (3) holds, then we need only extend the given involutive representation $\pi$ to an involutive representation $\tilde\pi$ of $\cstar$, use \cite[Proposition~2]{T2} to write $\Ker\tilde\pi$ as an intersection of the counterparts of the $\piaper{x}$ and $\piper{x}{\lambda}$ for $\cstar$, and take the intersection of the ensuing relation with $\lone$ to see that (3) implies (2). The equivalence of (1) and (4) is a restatement of Lemma~\ref{l:order_inflection} in the present context.

This completes the proof of the equivalences. Any such ideal is clearly self-adjoint, and the remaining statement has been established during the previous part of the proof.
\end{proof}

Now that the fixed points of $\Ideal\Zeroes$ have been identified, the results in Appendix~\ref{sec:appendix} yield the following.

\begin{theorem}\label{t:hull_kernel_setup_L_1_model}
Let $\Zeroesdomain$ be the set of all closed ideals of $\lone$ and let $\Idealdomain=\{\Zeroes(I) : I\in\Zeroesdomain\}$ be the ensuing subset of the set of all $\prodhomeo$-invariant closed subsets of $\XtimesT$, both ordered by inclusion. Then $\Zeroes: \Zeroesdomain\to \Idealdomain$ and $\Ideal: \Idealdomain\to \Zeroesdomain$ are decreasing, $\Ideal\circ\Zeroes (I)\succ I$ for all $I\in\Zeroesdomain$, and $\Zeroes\circ\Ideal=\idmap_{\Idealdomain}$. Let ${\Zeroesdomain}_{\textup{invrep}}$ be the set of all kernels of involutive representations of $\lone$. Then:
\begin{enumerate}
\item $\Zeroes:{\Zeroesdomain}_{\textup{invrep}}\to\Idealdomain$ and $\Ideal:\Idealdomain\to {\Zeroesdomain}_{\textup{invrep}}$ are mutually inverse bijections;
\item The following are equivalent:
\begin{enumerate}
\item $\Zeroes$ is injective on $\Zeroesdomain$;
\item Each closed ideal $I$ of $\lone$ is of the form $\Ideal(S)$ for some $S\in\Idealdomain$;
\item Each closed ideal of $\lone$ is the kernel of an involutive representation of $\lone$;
\item For each $S\in\Idealdomain$, $\Ideal(S)$ is the unique closed ideal $I^\prime$ of $\lone$ such that $\Zeroes(I^\prime)=S$.
\end{enumerate}
\item For each ideal $I$ that is the kernel of an involutive representation of $\lone$, $\Zeroes(I)$ is the unique element $S$ of $\Idealdomain$ such that $\Zeroes(S)=I$;
\item If $I$ is a closed ideal of $\lone$, then $\Ideal\Zeroes(I)$ is the smallest kernel of an involutive representation of $\lone$ that contains $I$, and it also the largest closed ideal $I^\prime$ of $\lone$ such that $\Zeroes(I^\prime)=\Zeroes(I)$. It is self-adjoint and,
if $\setofaperpoints\subset\aperpoints$, $\setofperpoints\subset\perpoints$, and, for each $x\in\setofaperpoints\cup\setofperpoints$, $\T_x\subset\T$, are such that
\[
\Zeroes(I)=\bigcup_{x\in\setofaperpoints}\bigcup_{\lambda\in\T_x}[\,\orbit{x}\,]\times\{\lambda\}\,\bigcup_{x\in\setofperpoints}\bigcup_{\mu\in\T_x}[\,\orbit{x}\,]\times\{\mu\},
\]
then
\[
\Ideal\Zeroes(I)=\bigcap_{x\in\setofaperpoints}\piaper{x}\,\bigcap_{x\in\setofperpoints}\bigcap_{\mu\in\T_x}\piper{x}{\mu^p}
\]
\end{enumerate}
\end{theorem}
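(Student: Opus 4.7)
The plan is to reduce the entire statement to an application of the Galois-connection machinery of Appendix~\ref{sec:appendix}, once the fixed points of $\Ideal\circ\Zeroes$ on $\Zeroesdomain$ have been identified. The latter is exactly what Theorem~\ref{t:fixed_points_of_IZ} accomplishes: its equivalence (1)$\Leftrightarrow$(3) says that the fixed points coincide with ${\Zeroesdomain}_{\textup{invrep}}$. Every remaining assertion is then a routine translation of general facts about order-reversing pairs.

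First I would dispose of the preamble. Monotonicity of $\Zeroes$ and $\Ideal$ is Lemma~\ref{l:basic_Zeroes_properties}(3) and Lemma~\ref{l:basic_Ideal_properties}(10). The inflation $\Ideal\circ\Zeroes(I)\supset I$ for closed ideals $I$ is Lemma~\ref{l:Zeroes_Ideal_composition}(3). By the very definition, $\Idealdomain=\Zeroes(\Zeroesdomain)$, so every $S\in\Idealdomain$ has the form $S=\Zeroes(I_0)$ for some $I_0\in\Zeroesdomain$. Combining $\Ideal\Zeroes(I_0)\supset I_0$ with the decreasing character of $\Zeroes$, and with $\Zeroes\Ideal(T)\supset T$ (from Lemma~\ref{l:Zeroes_Ideal_composition}(2)), gives the standard Galois identity $\Zeroes\circ\Ideal\circ\Zeroes=\Zeroes$, whence $\Zeroes\Ideal(S)=\Zeroes\Ideal\Zeroes(I_0)=\Zeroes(I_0)=S$, and $\Zeroes\circ\Ideal=\idmap_{\Idealdomain}$ follows.

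With the fixed-point set of $\Ideal\circ\Zeroes$ pinned down as ${\Zeroesdomain}_{\textup{invrep}}$, Corollary~\ref{c:corollary_of_three_maps_lemma} of the appendix yields part (1) at once, and part (3) is a reformulation of the bijectivity in part (1). For part (2), the abstract framework gives that injectivity of $\Zeroes$ on all of $\Zeroesdomain$ is equivalent to every closed ideal being a fixed point of $\Ideal\circ\Zeroes$, which through Theorem~\ref{t:fixed_points_of_IZ} is exactly (2)(c). Any closed ideal in the image of $\Ideal$ is automatically fixed by $\Ideal\circ\Zeroes$ since $\Zeroes\circ\Ideal=\idmap_{\Idealdomain}$, yielding the equivalence with (2)(b); and (2)(d) follows from injectivity together with the surjectivity of $\Zeroes$ onto $\Idealdomain$ which is built into the definition of $\Idealdomain$.

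For part (4), $\Ideal\Zeroes(I)$ is itself fixed by $\Ideal\circ\Zeroes$, by the idempotency that follows from the Galois identities, so Theorem~\ref{t:fixed_points_of_IZ} places it in ${\Zeroesdomain}_{\textup{invrep}}$; that it is the smallest such ideal containing $I$ and the largest closed ideal whose $\Zeroes$-image agrees with $\Zeroes(I)$ is precisely the statement of Lemma~\ref{l:min_max_lemma}. Self-adjointness and the explicit intersection formula involving $\piaper{x}$ and $\piper{x}{\mu^p}$ come directly from the last assertion of Theorem~\ref{t:fixed_points_of_IZ} applied to $\Ideal\Zeroes(I)$, whose $\Zeroes$-image coincides with $\Zeroes(I)$ by the identity $\Zeroes\circ\Ideal\circ\Zeroes=\Zeroes$. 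I expect no genuine obstacle: all of the real analytic content (the description of the three families of ideals, Propositions~\ref{p:included_in_P_x}, \ref{p:included_in_P_x_lambda}, \ref{p:key_pre_image_proposition}, and the identification in Theorem~\ref{t:fixed_points_of_IZ}) is already in place, and what remains is careful bookkeeping of which appendix lemmas apply to the pair $(\Zeroesdomain,\Idealdomain)$ defined here rather than to the pair $(\Hulldomain,\Kerneldomain)$ of the previous section.
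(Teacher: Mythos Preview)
Your proposal is correct and follows essentially the same approach as the paper: both reduce the theorem to the abstract Galois-connection results of Appendix~\ref{sec:appendix}, using Theorem~\ref{t:fixed_points_of_IZ} to identify the fixed points of $\Ideal\circ\Zeroes$ with ${\Zeroesdomain}_{\textup{invrep}}$, and then obtain the explicit intersection in part~(4) by applying Theorem~\ref{t:fixed_points_of_IZ} to $\Ideal\Zeroes(I)$ together with $\Zeroes(\Ideal\Zeroes(I))=\Zeroes(I)$. The paper's proof is terser (essentially a one-line reference to the appendix plus the observation for part~(4)), while you spell out which appendix lemmas handle which parts, but there is no substantive difference.
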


\begin{proof}
Everything is clear from the results in Appendix~\ref{sec:appendix}, except for the explicit intersection in part (4). As to this, note that $\Ideal\Zeroes(\Ideal\Zeroes(I))=\Ideal\Zeroes(I)$, hence Theorem~\ref{t:fixed_points_of_IZ} shows how $\Ideal\Zeroes(I)$ can be written as an intersection corresponding to a decomposition of $\Zeroes(\Ideal\Zeroes(I))=\Zeroes(I)$.
\end{proof}

We can now resolve the second issue raised in Remark~\ref{r:Z_I_ast_Z_I}.

\begin{corollary}\label{c:Zeroes_I_ast_Zeroes_I}
Let $I$ be an ideal of $\lone$. Then $\Zeroes(I)=\Zeroes(I^\ast)$.
\end{corollary}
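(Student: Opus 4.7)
The plan is to exploit the fact that $\Ideal\Zeroes(I)$ is always self-adjoint, together with the order-reversing property of $\Zeroes$ and the identity $\Zeroes\Ideal\Zeroes=\Zeroes$ from Theorem~\ref{t:hull_kernel_setup_L_1_model}.

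First, I would observe that $\Ideal\Zeroes(I)$ is a self-adjoint ideal. Indeed, Theorem~\ref{t:fixed_points_of_IZ} represents $\Ideal\Zeroes(I)$ as an intersection of ideals of the form $\piaper{x}$ and $\piper{x}{\lambda}$, all of which are self-adjoint by Proposition~\ref{p:behaviour_of_families}; hence their intersection is self-adjoint. Alternatively, one can appeal directly to part (3) of Theorem~\ref{t:fixed_points_of_IZ}: $\Ideal\Zeroes(I)$ is the kernel of an involutive representation of $\lone$, hence self-adjoint.

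Next, from $I \subset \Ideal\Zeroes(I)$ (Lemma~\ref{l:Zeroes_Ideal_composition}) and the self-adjointness just established, I conclude that $I^\ast \subset (\Ideal\Zeroes(I))^\ast = \Ideal\Zeroes(I)$. Applying the order-reversing map $\Zeroes$ and using $\Zeroes\Ideal\Zeroes=\Zeroes$ (from Theorem~\ref{t:hull_kernel_setup_L_1_model}), I obtain
\[
\Zeroes(I^\ast) \supset \Zeroes(\Ideal\Zeroes(I)) = \Zeroes(I).
\]

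Finally, since this inclusion holds for any ideal of $\lone$, I apply it with $I^\ast$ in place of $I$ and use $(I^\ast)^\ast = I$ to get the reverse inclusion $\Zeroes(I) \supset \Zeroes(I^\ast)$. Combining, $\Zeroes(I) = \Zeroes(I^\ast)$. There is no real obstacle here; the whole argument rests on the self-adjointness of $\Ideal\Zeroes(I)$, which is the substantive point already contained in Theorem~\ref{t:fixed_points_of_IZ}.
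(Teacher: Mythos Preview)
Your argument is correct and is essentially the paper's own proof: both use that $\Ideal\Zeroes(I)$ is a self-adjoint ideal containing $I$ (hence containing $I^\ast$), apply $\Zeroes$ together with $\Zeroes\Ideal\Zeroes=\Zeroes$ to obtain $\Zeroes(I^\ast)\supset\Zeroes(I)$, and finish by symmetry. The only cosmetic difference is that the paper first reduces to closed $I$ so as to invoke part~(4) of Theorem~\ref{t:hull_kernel_setup_L_1_model} directly, whereas you appeal to Theorem~\ref{t:fixed_points_of_IZ} for the self-adjointness; note that the identity $\Zeroes\Ideal\Zeroes=\Zeroes$ you use is stated for arbitrary ideals just after Lemma~\ref{l:Zeroes_Ideal_composition}, rather than in Theorem~\ref{t:hull_kernel_setup_L_1_model}.
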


\begin{proof}
We may assume that $I$ is closed. Part (4) of Theorem~\ref{t:hull_kernel_setup_L_1_model} furnishes
 a self-adjoint ideal $I^\prime\supset I$ such that $\Zeroes(I^\prime)=\Zeroes(I)$ (where $I^\prime=\lone$ if $\Zeroes(I)=\emptyset$). Since $I^\prime\supset I^\ast$, we have $\Zeroes(I^\ast)\supset\Zeroes(I^\prime)=\Zeroes(I)$. Likewise, $\Zeroes(I^\ast)\supset\Zeroes(I^{\ast\ast})=\Zeroes(I)$.
\end{proof}

It is now possible to give conditions equivalent to spectral synthesis holding in this model. Of course, Theorem~\ref{t:fixed_points_of_IZ} is then applicable to all closed ideals of $\lone$. Moreover, since the freeness of $\dynsysshort$ is one of the conditions, all equivalent conditions of Theorem~\ref{t:all_ideals_well_behaved} are also valid, and Theorem~\ref{t:ideal_well_behaved} is applicable to all closed ideals.

\begin{theorem}\label{t:all_ideals_kernels}
The following are equivalent:
\begin{enumerate}
\item The maps $I \rightarrow \Zeroes(I)$ and $S \rightarrow \Ideal(S)$ are mutually inverse bijections between the set of closed ideals of $\lone$ on the one hand, and the subset $\{\Zeroes(I) : I \textup{ a closed ideal of }\lone\}$ of all $\prodhomeo$-invariant closed subsets of $\XtimesT$ on the other hand;
\item Every closed ideal of $\lone$ is the kernel of an involutive representation of $\lone$;
\item Every closed ideal of $\lone$ is self-adjoint;
\item Every closed ideal of $\lone$ is the intersection of primitive ideals;
\item $\dynsysshort$ is free.
\end{enumerate}
\end{theorem}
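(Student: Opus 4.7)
The plan is to prove the equivalences by establishing a cycle that links conditions (2), (3), (4), (5), and then attach (1) via an appeal to Theorem~\ref{t:hull_kernel_setup_L_1_model}. The equivalence (1) $\Leftrightarrow$ (2) is immediate from part~(2) of Theorem~\ref{t:hull_kernel_setup_L_1_model}, so the bulk of the work lies in showing the equivalence of (2), (3), (4) and (5). I would proceed in the order (2) $\Rightarrow$ (3) $\Rightarrow$ (5) $\Rightarrow$ (2) $\Rightarrow$ (4) $\Rightarrow$ (3).

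For the easy implications: (2) $\Rightarrow$ (3) is automatic because the kernel of an involutive representation on a Hilbert space is a self-adjoint closed ideal; (4) $\Rightarrow$ (3) follows because primitive ideals are self-adjoint (see the remark after Definition~\ref{d:primitive_ideal}) and arbitrary intersections of self-adjoint sets are self-adjoint; and (2) $\Rightarrow$ (4) is the content of (1) $\Rightarrow$ (2) in Theorem~\ref{t:fixed_points_of_IZ}, which writes every kernel of an involutive representation as an intersection of ideals from the primitive families $\{\piaper{x}\}$ and $\{\piper{x}{\lambda}\}$.

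The step (3) $\Rightarrow$ (5) is the one that rests on external input: by \cite[Theorem~4.4]{DST}, self-adjointness of every closed ideal of $\lone$ already forces $\dynsysshort$ to be free. This is the step I would expect to be the main obstacle if one had to reprove everything from scratch, but since it is available as a cited result, the real substance here is the converse.

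Finally, for (5) $\Rightarrow$ (2), I would combine two already established facts. If $\dynsysshort$ is free, then by the equivalence (7) $\Leftrightarrow$ (2) of Theorem~\ref{t:all_ideals_well_behaved} every closed ideal $I$ of $\lone$ is well behaved. Applying Theorem~\ref{t:ideal_well_behaved} (which in the free case has the set $\setofperpoints$ empty, as noted in the proof of Theorem~\ref{t:all_ideals_well_behaved}), any such $I$ can be written as $I=\bigcap_{x\in\setofaperpoints}\piaper{x}$ for a suitable $\setofaperpoints\subset\aperpoints$; each $\piaper{x}$ is the kernel of the involutive representation $\pi_x$, and $I$ is the kernel of the Hilbert direct sum $\bigoplus_{x\in\setofaperpoints}\pi_x$, proving (2). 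This closes the cycle, and the final assertion about $\Zeroes$ and $\Ideal$ being mutually inverse bijections then drops out of part~(2) of Theorem~\ref{t:hull_kernel_setup_L_1_model}.
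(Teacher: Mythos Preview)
Your proof is correct and follows essentially the same route as the paper: both obtain (1)$\Leftrightarrow$(2) from Theorem~\ref{t:hull_kernel_setup_L_1_model}, use \cite[Theorem~4.4]{DST} (directly or through Theorem~\ref{t:all_ideals_well_behaved}) for (3)$\Rightarrow$(5), and derive (5)$\Rightarrow$(2) by writing a well behaved closed ideal as an intersection of the $\piaper{x}$ and taking the Hilbert sum of the $\pi_x$. One cosmetic slip: for (2)$\Rightarrow$(4) you should invoke the implication (3)$\Rightarrow$(2) of Theorem~\ref{t:fixed_points_of_IZ} rather than (1)$\Rightarrow$(2), since your hypothesis on $I$ is that it is the kernel of an involutive representation; the equivalences there make this harmless.
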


\begin{proof}
The equivalence of (1) and (2) is immediate from Theorem~\ref{t:hull_kernel_setup_L_1_model}. Certainly (2) implies (3), which by Theorem~\ref{t:all_ideals_well_behaved} is equivalent with (4) and (5). If (5) holds, then Theorem~\ref{t:all_ideals_well_behaved} shows that each closed ideal of $\lone$ is an intersection of kernels of involutive representations, hence is itself such a kernel. Thus (5) implies (2).
\end{proof}

\begin{remark}\label{r:similarity_to_C_ast_algebras}
The above result can be interpreted, as follows. The corresponding properties under (2), (3), and (4) in Theorem~\ref{t:all_ideals_kernels} are valid for all $C^*$-algebras. For the Banach algebras with isometric involution under consideration, these three properties are either all present or all absent, and they are all present precisely when the underlying dynamical system is free.
\end{remark}

\begin{remark}
For general locally compact abelian $G$, $L^1(G)$ is a regular commutative Banach algebra, i.e., every closed subset of $\widehat G$ is the hull of a closed ideal of $L^1(G)$. One might surmise that, in our case, the set $\Idealdomain$ in Theorem~\ref{t:all_ideals_kernels} consists of \emph{all} $\prodhomeo$-invariant closed subsets of $\XtimesT$. This is, however, not the case. If the system is free, then the combination of Theorem~\ref{t:all_ideals_kernels}, Theorem~\ref{t:fixed_points_of_IZ} and Proposition~\ref{p:zeroes_of_well_behaved_ideal} shows that $\Idealdomain=\{A\times\T : A\subset\topspace\textup{ $\homeo$-invariant and closed}\}$, and this set does not exhaust the $\prodhomeo$-invariant closed subsets of $\XtimesT$.
\end{remark}

\appendix

\section{Hulls and kernels: abstract framework}\label{sec:appendix}

In this Appendix, we collect some basic results on the general set-theoretical framework underlying hull-kernel-type constructions. Although the results and arguments are elementary and have been used in many particular cases, we are not aware of a general reference, and in view of their occurrence in both Section~\ref{sec:function_space_model} and Section~\ref{sec:l_one_model}, we find it worthwhile to make them explicit.

Let $A$ and $B$ be sets, supplied with a binary relation $\prec$ which is anti-symmetric, i.e., if $a_1,a_2\in A$, $a_1\prec a_2$ and $a_2\prec a_1$, then $a_1=a_2$, and likewise for $B$. We  use $a_2\succ a_1$ as an equivalent notation for $a_1\prec a_2$, and likewise for $B$. We do not assume $\prec$ to be reflexive or transitive. Furthermore, let $\amap: A\to B$ and $\bmap : B\to A$ be maps with the following properties.
\begin{assumption}\quad
\begin{enumerate}
 \item
\begin{enumerate}
\item $\bmap\circ\amap(a)\succ a$, for all $a\in A$;
\item if $a_1,a_2\in A$ and $a_1\succ a_2$, then $\amap(a_1)\prec \amap(a_2)$;
\end{enumerate}
\item \begin{enumerate}
\item $\amap\circ\bmap(b)\succ b$, for all $b\in B$;
\item if $b_1,b_2\in B$ and $b_1\succ b_2$, then $\bmap(b_1)\prec \bmap(b_2)$.
\end{enumerate}
\end{enumerate}
\end{assumption}

Thus there is full symmetry in $A$ and $\amap$ on the one hand, and $B$ and $\bmap$ on the other hand. Hence in the results below it would be sufficient to give just one of the statements, but it seems convenient for practical situations to formulate both. Naturally, we prove only one of them.

A typical example of this setup occurs when $A$ is the set of closed ideals of a commutative Banach algebra, and $B$ is the power set of its maximal ideal space, with $\prec$ denoting inclusion in both cases. If $I\in A$, then one lets $\amap(I)$ be the usual hull $\hull(I)$ of $I$, and if $S\in B$, then $\bmap(S)$ is the usual kernel $\kernel (S)$ of $S$. The fixed elements in $B$ of $\amap\circ\bmap=\hull\circ\kernel$ constitute the closed subsets in the hull-kernel topology on the maximal ideal space. The collection of such closed subsets coincides with the collection of hulls of closed ideals, (cf.\ part (2) of Corollary~\ref{c:corollary_of_three_maps_lemma}) and one of the main issues in spectral synthesis for commutative Banach algebras is the injectivity of the map $\amap=\hull$ on the set $A$ of closed ideals.

Likewise, the introduction of the Jacobson topology on the primitive ideal space of a general algebra falls within this framework, and the same holds true for the operations in Sections~\ref{sec:function_space_model} and \ref{sec:l_one_model}. In Section~\ref{sec:function_space_model} one takes for $A$ the set of ideals of $\lone$ (ordered by inclusion), with $\amap=\Hull$, and for $B$ the subsets of $\topspace$ (ordered by inclusion), with $\bmap=\Kernel$. In Section~\ref{sec:l_one_model} one takes for $A$ the set of ideals (or: closed ideals) of $\lone$ (ordered by inclusion) again, but now with $\amap=\Zeroes$, and for $B$ the subsets (or: closed subsets) of $\topspace\times \T$ invariant under $\homeo\times\idmap_\T$ (ordered by inclusion), with $\bmap=\Ideal$. That the above Assumption is then satisfied is the content of  Lemma~\ref{l:basic_Hull_properties}, Lemma~\ref{l:basic_Kernel_properties}, and Lemma~\ref{l:Hull_Kernel_composition} for Section~\ref{sec:function_space_model}, and of Lemma~\ref{l:basic_Zeroes_properties}, Lemma~\ref{l:basic_Ideal_properties} and Lemma~\ref{l:Zeroes_Ideal_composition} for Section~\ref{sec:l_one_model}.

\begin{lemma}\label{l:three_maps_lemma}\quad
$\amap\circ\bmap\circ\amap=\amap$ and $\bmap\circ\amap\circ\bmap=\bmap$.
\end{lemma}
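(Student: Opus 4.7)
The plan is to prove the first equality $\amap\circ\bmap\circ\amap=\amap$ pointwise, exploiting the two inequalities in the Assumption together with the anti-symmetry of $\prec$ in $B$; the second equality will follow by the same argument with the roles of $(A,\amap)$ and $(B,\bmap)$ interchanged.

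Fix an arbitrary $a\in A$. First I would apply Assumption (1)(a) to obtain $\bmap\circ\amap(a)\succ a$ in $A$. Then, invoking the order-reversing property of $\amap$ in Assumption (1)(b), this inequality gives $\amap(\bmap\circ\amap(a))\prec \amap(a)$ in $B$, i.e.\ $\amap\circ\bmap\circ\amap(a)\prec\amap(a)$.

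For the reverse inequality, I would set $b:=\amap(a)\in B$ and apply Assumption (2)(a) to this $b$, yielding $\amap\circ\bmap(b)\succ b$, which in the original notation reads $\amap\circ\bmap\circ\amap(a)\succ\amap(a)$. Combining the two inequalities, the anti-symmetry of $\prec$ on $B$ forces $\amap\circ\bmap\circ\amap(a)=\amap(a)$. Since $a\in A$ was arbitrary, this proves $\amap\circ\bmap\circ\amap=\amap$. The identity $\bmap\circ\amap\circ\bmap=\bmap$ is proved in exactly the same way, using Assumption (2)(a),(b) in place of (1)(a),(b) and vice versa, and the anti-symmetry of $\prec$ on $A$.

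There is no real obstacle here: the only subtle point is noticing that reflexivity and transitivity of $\prec$ are never needed, so that one must carefully apply each of the four clauses of the Assumption exactly once and then conclude purely from anti-symmetry. I would simply make sure both directions of the inequality are derived and combined correctly before invoking anti-symmetry.
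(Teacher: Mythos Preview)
Your proof is correct and follows essentially the same approach as the paper: both derive $\amap\circ\bmap\circ\amap(a)\prec\amap(a)$ from Assumption (1)(a) followed by (1)(b), obtain the reverse inequality from Assumption (2)(a) applied to $b=\amap(a)$, and conclude by anti-symmetry. The second identity is likewise handled by symmetry in both.
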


\begin{proof}
Let $a\in A$. Then $\bmap\circ\amap(a)\succ a$ by part (1)(a) of the Assumption, hence part (1)(b) of the Assumption implies $\amap\circ\bmap\circ\amap(a)\prec \amap(a)$. On the other hand, part (2)(a) of the Assumption shows that $\amap\circ\bmap\circ\amap(a)=\amap\circ\bmap(\amap(a))\succ\amap(a)$. Hence we have equality.
\end{proof}

Corollary~\ref{c:corollary_of_three_maps_lemma} and Corollary~\ref{c:pre_image} are based only on the properties of $\amap$ and $\bmap$ in Lemma~\ref{l:three_maps_lemma}. We let $\fixab$ denote the fixed points in $B$ of $\amap\circ\bmap$, and similarly for $\fixba$.

\begin{corollary}\label{c:corollary_of_three_maps_lemma}\quad
\begin{enumerate}
\item $(\bmap\circ\amap)^2=\bmap\circ\amap$ and $(\amap\circ\bmap)^2=\amap\circ\bmap$.
\item $\amap(A)=\fixab$ and $\bmap(B)=\fixba$.
\item The restricted maps $\amap: \fixba\to\fixab$ and $\bmap:\fixab\to\fixba$ are mutually inverse bijections.
\item
\begin{enumerate}
\item The following are equivalent:
\begin{enumerate}
\item $\amap$ is injective on $A$;
\item $A=\bmap(B)$;
\item $A=\fixba$;
\item $\{a\in A : \amap(a)=b\}=\{\bmap(b)\}$, for all $b\in\fixab$.
\end{enumerate}
\item The following are equivalent:
\begin{enumerate}
\item $\bmap$ is injective on $B$;
\item $B=\amap(A)$;
\item $B=\fixab$;
\item $\{b\in B : \bmap(b)=a\}=\{\amap(a)\}$, for all $a\in\fixba$.
\end{enumerate}
\end{enumerate}
\end{enumerate}
\end{corollary}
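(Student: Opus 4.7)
The plan is to derive everything from Lemma~\ref{l:three_maps_lemma}, i.e., the two identities $\amap\circ\bmap\circ\amap=\amap$ and $\bmap\circ\amap\circ\bmap=\bmap$, together with set-theoretic bookkeeping; no further appeal to the Assumption is needed. For part~(1), I would just compute $(\bmap\circ\amap)^2=\bmap\circ(\amap\circ\bmap\circ\amap)=\bmap\circ\amap$ and, symmetrically, $(\amap\circ\bmap)^2=\amap\circ\bmap$.

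For part~(2), I would prove $\amap(A)=\fixab$ by two inclusions. If $b=\amap(a)$ for some $a\in A$, then Lemma~\ref{l:three_maps_lemma} gives $\amap\circ\bmap(b)=\amap\circ\bmap\circ\amap(a)=\amap(a)=b$, so $b\in\fixab$. Conversely, every $b\in\fixab$ equals $\amap(\bmap(b))$, which lies in $\amap(A)$. By symmetry, $\bmap(B)=\fixba$. Part~(3) is then immediate: combining (1) and (2), if $a\in\fixba\subset A$ then $\amap(a)\in\amap(A)=\fixab$, so $\amap$ restricts to a map $\fixba\to\fixab$; symmetrically for $\bmap$. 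That these restrictions compose to the identities on $\fixba$ and $\fixab$ is the defining property of fixed-point sets, whence mutually inverse bijections.

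For part~(4)(a), I would run the cycle (i)$\Rightarrow$(iii)$\Rightarrow$(ii)$\Rightarrow$(i) and then show (i)$\Leftrightarrow$(iv); the proof of~(4)(b) is identical by the symmetry of the setup. The step (i)$\Rightarrow$(iii) rests on Lemma~\ref{l:three_maps_lemma}: for any $a\in A$, the identity $\amap(\bmap\circ\amap(a))=\amap(a)$ together with injectivity of $\amap$ forces $\bmap\circ\amap(a)=a$, so $a\in\fixba$; the reverse inclusion $\fixba\subseteq A$ is automatic. The step (iii)$\Rightarrow$(ii) is just part~(2). For (ii)$\Rightarrow$(i), write $a_i=\bmap(b_i)$ for $i=1,2$; if $\amap(a_1)=\amap(a_2)$, then
\[
a_1=\bmap(b_1)=\bmap\circ\amap\circ\bmap(b_1)=\bmap\circ\amap(a_1)=\bmap\circ\amap(a_2)=\bmap\circ\amap\circ\bmap(b_2)=\bmap(b_2)=a_2.
\]
Finally, for (i)$\Leftrightarrow$(iv), note that whenever $b\in\fixab$ we have $\amap(\bmap(b))=b$, so $\bmap(b)$ always lies in the preimage $\{a\in A:\amap(a)=b\}$; any $a$ in that preimage satisfies $b=\amap(a)\in\amap(A)=\fixab$ by~(2), and injectivity of $\amap$ is precisely the statement that these preimages are singletons, equal to $\{\bmap(b)\}$.

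The entire argument is a purely formal diagram chase, and I do not expect a genuine obstacle; the only care needed is to organize the implications in part~(4) so that the single identity $\amap\circ\bmap\circ\amap=\amap$ (plus its mirror) does all the work. In particular, nothing in this corollary requires reflexivity or transitivity of $\prec$, nor any further property of $\amap,\bmap$ beyond the conclusions of the previous lemma.
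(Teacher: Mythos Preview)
Your proposal is correct and follows essentially the same route as the paper: parts (1)--(3) are argued identically, and for part~(4) the paper compresses the chain of equivalences into the single observation that $\amap(A)=\fixab=\amap(\fixba)$ (from parts~(2) and~(3)) ``implies the equivalence in part~(4)(a)'', whereas you spell out the cycle (i)$\Rightarrow$(iii)$\Rightarrow$(ii)$\Rightarrow$(i) and (i)$\Leftrightarrow$(iv) explicitly. The underlying logic is the same; your version simply unpacks what the paper leaves to the reader.
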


\begin{proof}
Part (1) is immediate from Lemma~\ref{l:three_maps_lemma}. As to part (2), if $b\in\amap(A)$, say $b=\amap(a)$ for $a\in A$, then $\amap\circ\bmap(b)=(\amap\circ\bmap\circ\amap)(a)=\amap(a)=b$ by Lemma~\ref{l:three_maps_lemma}. Hence $\amap(A)\subset\fix(\amap\circ\bmap)$. Since the reverse inclusion is obvious, we have equality. For part (3), we need only remark that the codomains are appropriate as a consequence of part (2), since it is then obvious that the restricted maps are mutually inverse bijections.
The parts (2) and (3) yield $\amap(A)=\fixab=\amap(\fixba)$, and this implies the equivalence in part (4)(a).
\end{proof}

The picture to keep in mind is the following.

\begin{tikzpicture}[scale=0.8]
\draw [black, thick] (0,0) circle [radius=3.5];
\draw [fill=gray!27][style=thick](0,0) circle [radius=2.5];
\draw [black, thick] (8,0) circle [radius=3.5];
\draw [fill=gray!27][style=thick] (8,0) circle [radius=2.5];
\draw[->, thick] (0,3) to [out=45,in=135] (8,1.5);
\draw[->, thick] (8,-3) to [out=-135,in=-45] (0,-1.5);
\draw[->, thick] (1,0.1) to [out=30,in=150](7,0.1);
\draw[->, thick] (7,-0.1) to [out=-159,in=-30](1,-0.1);
\node at (4,3.5) {$\mathbf \alpha$};
\node at (4,-3.5) {$\mathbf \beta$};
\node at (4,1.3) {$\mathbf \alpha$};
\node at (4,0.65){$\simeq$};
\node at (4,-1.3) {$\mathbf \beta$};
\node at (4,-0.55){$\simeq$};
\node at (-3,3){$A$};
\node at (11,3){$B$};
\node at (-0.6,0.55) {$\beta(B)$};
\node at (-0.6,0){$=$};
\node at (-0.6,-0.55){$\textup{Fix}(\beta\circ\alpha)$};
\node at (8.6,0.55) {$\alpha(A)$};
\node at (8.6,0){$=$};
\node at (8.6,-0.55){$\textup{Fix}(\alpha\circ\beta)$};
\end{tikzpicture}

The following is now clear.

\begin{corollary}\label{c:pre_image}
If $a\in A$, then $\{a^\prime\in \fixba : \amap(a^\prime)=\amap(a)\}=\{\bmap\circ\amap(a)\}$.
\\
If $b\in B$, then $\{b^\prime\in \fixab : \bmap(b^\prime)=\bmap(b)\}=\{\amap\circ\bmap(b)\}$.
\end{corollary}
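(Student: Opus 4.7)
The plan is to deduce both statements directly from what has already been proved in Lemma~\ref{l:three_maps_lemma} and Corollary~\ref{c:corollary_of_three_maps_lemma}; by the symmetry of the setup, I shall only outline the argument for the first statement, since the second follows by interchanging the roles of $(A,\amap)$ and $(B,\bmap)$.

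First I would verify that $\bmap\circ\amap(a)$ is a genuine element of the purported set. That $\bmap\circ\amap(a)\in\fixba$ is immediate from part (1) of Corollary~\ref{c:corollary_of_three_maps_lemma}, which gives $(\bmap\circ\amap)^2 = \bmap\circ\amap$ and therefore $\bmap\circ\amap\bigl(\bmap\circ\amap(a)\bigr)=\bmap\circ\amap(a)$. That $\amap(\bmap\circ\amap(a))=\amap(a)$ is precisely the content of Lemma~\ref{l:three_maps_lemma}. Thus $\bmap\circ\amap(a)\in\{a^\prime\in\fixba : \amap(a^\prime)=\amap(a)\}$.

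For the reverse inclusion, suppose $a^\prime\in\fixba$ satisfies $\amap(a^\prime)=\amap(a)$. Applying $\bmap$ to both sides and using the hypothesis $\bmap\circ\amap(a^\prime)=a^\prime$ yields
\[
a^\prime \;=\; \bmap\circ\amap(a^\prime) \;=\; \bmap\circ\amap(a),
\]
which is the required equality. Conceptually, this is exactly the statement that the bijection $\amap:\fixba\to\fixab$ of part (3) of Corollary~\ref{c:corollary_of_three_maps_lemma} has $\bmap:\fixab\to\fixba$ as its inverse: the unique preimage in $\fixba$ under $\amap$ of the element $\amap(a)\in\amap(A)=\fixab$ (part (2) of the same corollary) is $\bmap(\amap(a))$.

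The second statement follows by the same argument with the roles of $\amap$ and $\bmap$, and of $A$ and $B$, interchanged, using $(\amap\circ\bmap)^2=\amap\circ\bmap$, $\bmap\circ\amap\circ\bmap=\bmap$, and the bijection $\bmap:\fixab\to\fixba$. No step is really an obstacle here; the only mild subtlety is keeping track of which composition lives in which fixed-point set, which is handled entirely by part (2) of Corollary~\ref{c:corollary_of_three_maps_lemma}.
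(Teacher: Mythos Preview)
Your proof is correct and is precisely the argument the paper has in mind: in the paper the corollary is stated immediately after the diagram with the remark ``The following is now clear,'' and what you have written spells out exactly why it is clear from Lemma~\ref{l:three_maps_lemma} and parts (1)--(3) of Corollary~\ref{c:corollary_of_three_maps_lemma}.
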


\begin{lemma}\label{l:min_max_lemma}\quad
\begin{enumerate}
\item Let $a\in A$. Then
\begin{align*}
\bmap\circ\amap(a)&=\min\{a^\prime\in\fixba : a^\prime\succ a\}\\
&=\max\{a^\prime\in A: \amap(a^\prime)=\amap(a)\}.
\end{align*}
\item Let $b\in B$. Then
\begin{align*}
\amap\circ\bmap(b)&=\min\{b^\prime\in\fixab : b^\prime\succ b\}\\
&=\max\{b^\prime\in B: \bmap(b^\prime)=\bmap(b)\}.
\end{align*}
\end{enumerate}
\end{lemma}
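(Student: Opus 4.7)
The proof is a straightforward application of the Assumption together with Lemma~\ref{l:three_maps_lemma}. By the symmetry between $(A,\amap)$ and $(B,\bmap)$ it suffices to prove part (1), and in fact to prove the two equalities separately. Throughout I read ``$\min$'' and ``$\max$'' relative to the relation $\prec$; uniqueness of either, should it exist, is automatic from the anti-symmetry of $\prec$, so the real content is the identification of a specific element.

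For the first equality $\bmap\circ\amap(a)=\min\{a^\prime\in\fixba : a^\prime\succ a\}$, I would first verify membership: by Corollary~\ref{c:corollary_of_three_maps_lemma}(1), $(\bmap\circ\amap)^2(a)=\bmap\circ\amap(a)$, so $\bmap\circ\amap(a)\in\fixba$, while part (1)(a) of the Assumption gives $\bmap\circ\amap(a)\succ a$. Next, to show it is below every other element of the set, let $a^\prime\in\fixba$ with $a^\prime\succ a$. Apply the order-reversing maps in succession: by (1)(b) of the Assumption, $\amap(a^\prime)\prec\amap(a)$, and then by (2)(b), $\bmap\amap(a^\prime)\succ\bmap\amap(a)$. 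But $a^\prime\in\fixba$ means $\bmap\amap(a^\prime)=a^\prime$, yielding $a^\prime\succ\bmap\amap(a)$, as required.

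For the second equality $\bmap\circ\amap(a)=\max\{a^\prime\in A: \amap(a^\prime)=\amap(a)\}$, membership follows from Lemma~\ref{l:three_maps_lemma}: $\amap(\bmap\amap(a))=(\amap\circ\bmap\circ\amap)(a)=\amap(a)$. For maximality, suppose $a^\prime\in A$ satisfies $\amap(a^\prime)=\amap(a)$. Apply $\bmap$ to both sides to get $\bmap\amap(a^\prime)=\bmap\amap(a)$, and combine this with part (1)(a) of the Assumption applied to $a^\prime$, giving $\bmap\amap(a^\prime)\succ a^\prime$, to conclude $\bmap\amap(a)\succ a^\prime$.

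The proof of part (2) is identical after swapping the roles of $(A,\amap)$ and $(B,\bmap)$ and using parts (2)(a), (2)(b) in place of (1)(a), (1)(b) (and vice versa). There is no real obstacle: the argument is a bookkeeping exercise in applying the two order-reversing maps in sequence and invoking idempotency of $\bmap\circ\amap$ on $\fixba$. The only point that deserves a moment's care is that the Appendix explicitly does \emph{not} assume $\prec$ is reflexive or transitive, so one must read ``$a^\prime\succ a$'' purely as a given instance of the relation; the argument above never uses reflexivity or transitivity, only anti-symmetry (implicitly, through the uniqueness of $\min$ and $\max$) and the four clauses of the Assumption.
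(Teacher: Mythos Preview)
Your proof is correct and follows essentially the same approach as the paper's: membership of $\bmap\circ\amap(a)$ in each set via the Assumption and Lemma~\ref{l:three_maps_lemma}/Corollary~\ref{c:corollary_of_three_maps_lemma}, then the two-step order reversal $(1)(b)$ followed by $(2)(b)$ combined with $a^\prime\in\fixba$ for the minimum, and applying $\bmap$ to $\amap(a^\prime)=\amap(a)$ together with $(1)(a)$ for the maximum. The only cosmetic difference is that the paper cites $\bmap(B)=\fixba$ for membership in $\fixba$, whereas you use the idempotency $(\bmap\circ\amap)^2=\bmap\circ\amap$; both are parts of Corollary~\ref{c:corollary_of_three_maps_lemma}.
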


\begin{proof}
Let $a\in A$, and put $S_1=\{a^\prime\in\fixba : a^\prime\succ a\}$. From part (1)(a) of the Assumption we have $\bmap\circ\amap (a)\succ a$. Since furthermore $\bmap\circ\amap(a)\in\bmap(B)=\fixba$, we see that $\bmap\circ\amap(a)\in S_1$. If $a^\prime\in S_1$, then $a^\prime\succ a$ implies $\amap(a^\prime)\prec\amap(a)$, hence $\bmap\circ\amap(a^\prime)\succ\bmap\circ\amap(a)$. Since additionally $a^\prime\in\fixba$, we see that $a^\prime\succ\bmap\circ\amap(a)$. Hence $\bmap\circ\amap$ is the (automatically unique) smallest element of $S_1$, as required. Turning to the second equality, let $S_2=\{a^\prime\in A: \amap(a^\prime)=\amap(a)\}$. Since $\amap(\bmap\circ\amap(a))=\amap(a)$ by Lemma~\ref{l:three_maps_lemma}, we see that $\bmap\circ\amap(a)\in S_2$. If $a^\prime\in S_2$, then $\amap(a^\prime)=\amap(a)$ implies $\bmap\circ\amap(a)=\bmap\circ\amap(a^\prime)\succ a^\prime$. Hence $\bmap\circ\amap(a)$ is the (automatically unique) largest element of $S_2$, as required.
\end{proof}

\begin{lemma}\label{l:order_inflection}
Let $a\in A$. Then the following are equivalent:
\begin{enumerate}
\item For all $a^\prime\in A$, $a^\prime\prec a$ if and only if $\amap(a^\prime)\succ \amap(a)$;
\item $a\in\fixba$;
\item $a\in\bmap(B)$.
\end{enumerate}
Let $b\in B$. Then the following are equivalent:
\begin{enumerate}
\item[(4)] For all $b^\prime\in B$, $b^\prime\prec b$ if and only if $\bmap(b^\prime)\succ \bmap(b)$;
\item[(5)] $b\in\fixab$;
\item[(6)] $b\in\amap(A)$.
\end{enumerate}
\end{lemma}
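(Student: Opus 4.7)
The plan is to exploit the machinery already developed in the appendix: the idempotency $\amap\circ\bmap\circ\amap=\amap$ from Lemma~\ref{l:three_maps_lemma} and the characterization $\bmap(B)=\fixba$ from Corollary~\ref{c:corollary_of_three_maps_lemma}(2), together with the monotonicity properties built into the Assumption and the antisymmetry of $\prec$. The equivalence (2)$\Leftrightarrow$(3) is immediate from Corollary~\ref{c:corollary_of_three_maps_lemma}(2): the fixed points of $\bmap\circ\amap$ are exactly the image of $\bmap$. Moreover, the equivalences (4)--(6) for elements of $B$ will require no separate argument, since they follow by the built-in symmetry of the framework under the interchange $(A,\amap)\leftrightarrow(B,\bmap)$.

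For (2)$\Rightarrow$(1), assume $a=\bmap\amap(a)$. The forward direction of the biconditional in (1), namely $a'\prec a\Rightarrow\amap(a')\succ\amap(a)$, is exactly part (1)(b) of the Assumption. For the converse, suppose $\amap(a')\succ\amap(a)$; applying $\bmap$ and invoking part (2)(b) of the Assumption gives $\bmap\amap(a')\prec\bmap\amap(a)=a$. Combining this with $a'\prec\bmap\amap(a')$ from part (1)(a) of the Assumption, and using transitivity of $\prec$, produces $a'\prec a$.

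For (1)$\Rightarrow$(2), I would test (1) on the particular element $a':=\bmap\amap(a)$. By Lemma~\ref{l:three_maps_lemma} one has $\amap(a')=\amap\bmap\amap(a)=\amap(a)$, so reflexivity of $\succ$ gives $\amap(a')\succ\amap(a)$, and (1) then forces $\bmap\amap(a)=a'\prec a$. Since $a\prec\bmap\amap(a)$ by part (1)(a) of the Assumption, the antisymmetry of $\prec$ yields $a=\bmap\amap(a)$, which is (2).

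There is no serious obstacle; the arguments are pure bookkeeping. The only point to watch is to invoke the right monotonicity property in the right order and to use $\amap\bmap\amap=\amap$ at the decisive step of (1)$\Rightarrow$(2). Reflexivity of $\succ$ and transitivity of $\prec$ are tacitly used, but are of course always available in the intended applications, where $\prec$ is set-theoretic inclusion.
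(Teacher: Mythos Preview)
Your proof is correct and follows essentially the same route as the paper's: both establish (2)$\Leftrightarrow$(3) via Corollary~\ref{c:corollary_of_three_maps_lemma}, both prove (1)$\Rightarrow$(2) by testing on $a'=\bmap\amap(a)$ and invoking $\amap\bmap\amap=\amap$, and both prove (2)$\Rightarrow$(1) by applying $\bmap$ and sandwiching. Your explicit remark about the tacit use of reflexivity and transitivity is perceptive---the paper disclaims these but its own argument uses them just as yours does (reflexivity on $\amap(A)$ actually follows from $\amap\bmap\amap=\amap$ combined with Assumption~(2)(a), but transitivity is genuinely used without justification in both proofs).
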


\begin{proof}
We prove the statement for $A$. Suppose (1) holds. Since $\amap(\bmap\circ\amap(a))=\amap(a)$, we then have $\bmap\circ \amap(a)\prec a$. As always $\bmap\circ \amap(a)\succ a$, we have equality. Hence (1) implies (2). Assume that (2) holds, and suppose $a^\prime\in A$. Certainly $a^\prime\prec a$ implies $\amap(a^\prime)\succ \amap(a)$. If $\amap(a^\prime)\succ \amap(a)$, then $\bmap\circ\amap(a^\prime)\prec \bmap\circ\amap(a)=a$. Since $\bmap\circ\amap(a^\prime)\succ a^\prime$, we have $a^\prime\prec a$. Hence (2) implies (1). The equivalence of the second and third part has already been noted in Corollary~\ref{c:corollary_of_three_maps_lemma}.
\end{proof}

\subsection*{Acknowledgements}
This work was supported by visitor's grants of the Netherlands Organisation for Scientific Research (NWO) and the Dutch research cluster Geometry and Quantum Theory (GQT).


\end{document}